\documentclass{article}

\usepackage[T1]{fontenc}
\usepackage[utf8]{inputenc}
\usepackage{lmodern}
\usepackage{microtype}

\usepackage[total={5.8in, 7.8in}, asymmetric, bindingoffset=0.4in]{geometry}
\usepackage{amsmath}
\usepackage{amssymb}
\usepackage{amsthm}
\usepackage{mathtools}
\usepackage{bbm}
\usepackage{enumerate}
\usepackage{enumitem}
\usepackage[dvipsnames]{xcolor}

\usepackage{subcaption}

\usepackage{multirow}
\usepackage{empheq}
\usepackage{listings}
\usepackage{color}

\usepackage[square,comma,numbers]{natbib}

\usepackage{graphicx} %
\usepackage[font=small,labelfont=bf]{caption}
\usepackage{tikz}
\usetikzlibrary{babel}

\numberwithin{equation}{section}
\usepackage{pgfplots}
\pgfplotsset{width=10cm,compat=1.9}

\usepackage[colorlinks=true, urlcolor=blue, linkcolor=blue]{hyperref}

\newcommand*\indic[1]{\mathbbm{1}_{\{ #1 \}}}
\newcommand*\indica[1]{\mathbbm{1}_{ #1 }}

\def\cL{{\mathcal L}}

\def\cP{{\mathcal P}}

\def\C{\mathbb{C}}

\def\E{\mathbb{E}}

\def\P{\mathbb{P}}
\def\R{\mathbb{R}}

\def\d{\mathrm{d}}

\providecommand{\abs}[1]{\lvert#1\rvert}

\usepackage{MnSymbol}
\theoremstyle{plain}
\newtheorem{theorem}{Theorem}[section]
\newtheorem{lemma}[theorem]{Lemma}

\newtheorem{proposition}[theorem]{Proposition}
\newtheorem{assumption}[theorem]{Assumption}

\newtheorem{remark}[theorem]{Remark}

\usepackage[symbol]{footmisc}

\usepackage{fancyhdr}
\newcommand{\dontprintdoi}[1]{}
\AtBeginDocument{\let\doi\dontprintdoi}
\makeatother

\date{\vspace{-1em}\normalsize{\today}}

\title{On large networks of integrate-and-fire neurons with short-term synaptic plasticity
}

\author{Quentin Cormier\footnote{Inria Saclay, CMAP, Ecole Polytechnique, IP Paris, France, email:
{\tt quentin.cormier@inria.fr}} , Eva L\"{o}cherbach\footnote{CMAP, Ecole Polytechnique, IP Paris, France, email: {\tt eva.loecherbach@polytechnique.edu}} and Valentin Schmutz\footnote{Mathematical Institute, University of Oxford, United Kingdom, email: {\tt valentin.schmutz@maths.ox.ac.uk}}}

\date{\today}
\begin{document}
\maketitle

\begin{abstract}
    This work studies the mean-field limit of large networks of interacting
    stochastic leaky integrate-and-fire (LIF) neurons subject to short-term synaptic depression (STD). The macroscopic dynamics of this system is governed by a two-dimensional, non-linear McKean-Vlasov equation that couples the evolution of the neurons' membrane potentials with a synaptic depression variable. We investigate the long-time behavior of this limit system. To this end, we introduce an auxiliary linearized Markov process by freezing the interaction non-linearity to a constant. By exploiting the regeneration of the membrane potential at spike times, we are able to explicitly compute the conditional expectation of the synaptic depression variable, conditionally on the potential value, under the invariant measure of this two-dimensional linear process. This is a crucial ingredient to study time-dependent local perturbations thereof. As a consequence we are able to identify an analytic criterion guaranteeing the local stability of any invariant probability measure of the fully non-linear system. This stability criterion is formulated in terms of the zeros of the Laplace transform of a specific linear response function. Finally, we provide numerical examples demonstrating that the two-dimensional framework induces a richer spectrum of long-time dynamics than purely one-dimensional models. For example, synaptic depression can lead to low-frequency oscillations around a unique, unstable invariant measure where the oscillations are much slower than the neurons' firing rates. 
\end{abstract}
\vspace{10pt}
{\small
\noindent\textbf{Keywords} McKean-Vlasov SDE; Long-time behavior; Mean-field interaction; Volterra integral
equation; Piecewise deterministic Markov process; Stochastic Integrate-And-Fire neurons.
\smallskip\newline
\noindent\textbf{Mathematics Subject Classification} Primary: 60H10, Secondary
: 60K35; 45D05; 37A30; 60G55
}
\vspace{10pt}

\vspace{5pt}
\section{Introduction}
\subsection{Spiking neurons with short term synaptic depression}\label{sec:model}
We consider systems of interacting spiking neurons with short-term synaptic depression (STD), in their mean-field limits. The finite system version of our model is made of $n$ spiking neurons. Each neuron $i$ is characterized by its membrane potential $ V_t^{n, i } $ and its synaptic depression variable $ X_t^{n, i} $ taking values in $ [0, 1]$. The neuron spikes randomly, at rate $ f (V_{t-}^{n, i })$,  depending only on its membrane potential, independently of the other neurons. At any spiking time $T$ in the system, the following happens. If it is neuron $i$ that is spiking, then the membrane potential of neuron $i$ is reset to the resting value $0$ such that we have $ V_T^{n, i } := 0$, and its synaptic depression variable $X_{T-}^{n, i } $ decreases by $ U X_{T-}^{n, i } $ and is replaced by the new value $ X_{T}^{n, i }:= (1 - U ) X_{T-}^{n, i }.$ At the same time, all other neurons $ j \neq i $  have their membrane potentials changed by the additional amount  $ J  X_{T-}^{n, i }/n, $ where $ J \in \R $ is a fixed constant, that is, for all $j \neq i , $ 
$$V_T^{n, j } := V_{T-}^{n, j } + \frac{J X_{T-}^{n, i }}{ n}. $$
In between successive spikes in the system, each neuron's potential follows a deterministic evolution according to the ordinary differential equation (ODE) $ \d V_t^{n, i } = b( V_t^{n, i }) \d t,$ where $ b : \R \to \R$ describes the subthreshold dynamics of each neuron. Finally, the depression variables recover at exponential rate, that is, for some fixed constant $ \tau > 0, $ $ \tau \d X_t^{n, i } = ( 1 - X_t^{n, i } ) \d t .$ The variables $ X_t^{n, i }$ act as a fatigue mechanism on the synaptic transmission process and they are typically called the synaptic depression variables. We can think of $ X_t^{n, i } $ as representing, e.g., the number of synaptic vesicles that are ready for release in the axon terminal of neuron $i.$%

\subsection{Relation to previous models in computational neuroscience}
The model of short-term synaptic depression (STD) we use is the phenomenological model of Tsodyks and Markram developed in \cite{TsodyksMarkram1997} and simplified in \cite{Tsodyks1998}. STD can have important effects on information transmission between neurons \cite{Abbott1997,Pfister2010,Rosenbaum2012} and it is involved in a recently proposed model of biologically plausible learning in multilayer spiking neural networks \cite{Payeur2021}.

While networks of LIF neurons with STD have been analysed in several simulation studies \cite{Tsodyks2000,Loebel2002,Mongillo2005,Schmutz2020}, very few works have attempted a theoretical analysis of this type of model. In \cite{Romani2006}, the authors propose a heuristic mean-field analysis of the stationary state of networks of LIF neurons with STD and derive a formula expressing the mean depression variable in terms of the stationary interspike interval distribution. Considering a different stochastic spiking neuron model, namely age-dependent nonlinear Hawkes processes \cite{Chevallier2017} instead of LIF neurons with escape noise \cite{Gerstner2000,GalvesLocherbach2016}, a multidimensional McKean--Vlasov equation has been rigorously derived from systems of interacting neurons with Tsodyks--Markram short-term synaptic plasticity in \cite{Schmutz2022}. The corresponding two-dimensional limit equation for depressing-only synapses has been studied in \cite{FonteSchmutz2022}, where a closed-form expression for the stationary firing rate of the linear equation was derived. Although age-dependent nonlinear Hawkes processes and LIF neurons are different models, they are structurally closely related \cite{Gerstner1995}. 

If the membrane potential is not reset after each spike, the mean-field equation becomes much simpler. This was already observed in \cite{Tsodyks1998} and later proved rigorously in \cite{ACEE} in the case of purely facilitating synapses. Finite-size fluctuations for this simpler model have been studied, through non-rigorous methods, in \cite{Schmutz2020,Pietras2022}. For the analysis of the long-time behavior of two-dimensional mean-field models of networks of LIF neurons with spike-triggered adaptation, a fatigue mechanism different from STD, we refer the reader to \cite{salort2024,Veltz2025,Ambrogi2026}.

\subsection{Long-time behavior of the associated mean-field limit}
Let us come back to the model introduced in Section~\ref{sec:model} above. 
As the number of neurons $n$ tends to infinity, under suitable assumptions on the parameters of the model, the above system converges to its associated mean-field limit. In this mean-field limit, neurons become independent. Therefore, to describe the limit system, it is sufficient to describe the typical evolution of a single fixed neuron having potential value $ \bar V_t$ and a synaptic depression variable $\bar X_t.$ This limit dynamics is given by the non-linear equation of McKean-Vlasov type
\begin{eqnarray}\label{eq:VXNLintro}
	\d \bar V^\mu_t & =& b(\bar V^\mu_t)\d t + J \E [ \bar X^\mu_t f( \bar V^\mu_t)] \d t - \bar V^\mu_{t-} \int_{ \R_+} \indic{z \leq f(\bar V^\mu_{t-})} N(\d t, \d z) , \nonumber \\
	\d \bar X^\mu_t &=& \frac{1 - \bar X^\mu_t}{\tau} \d t - U \bar X^\mu_{t-} \int_{ \R_+} \indic{z \leq f(\bar V^\mu_{t-})} N(\d t, \d z) ,
\end{eqnarray}
where $ N ( \d t , \d z )$ is a Poisson random measure on $ \R_+ \times \R_+ $ having intensity $ \d t \d z,$ and where $ \mu $ is the initial law of the process, that is, $ ( \bar V_0^\mu, \bar X_0^\mu)\sim \mu .$ 

We work under standard regularity assumptions on the coefficients (see Assumption \ref{ass:1} below for the precise statement) and suppose in particular that $f$ and $b$ are Lipschitz continuous and that $f$ is bounded. In this framework, it is standard to prove the convergence of the finite system to its mean-field limit; we will not treat this point here and we refer to \cite{eva-fou,CTV,ACEE,Schmutz2022} for such studies. In the present paper we restrict our attention to the mere study of the mean-field limit, with a focus on its long-time behavior. This is not a trivial task since the evolution in (\ref{eq:VXNLintro}) is not Markovian. Indeed, due to the presence of the interaction term $J \E ( \bar X^\mu_t f( \bar V^\mu_t))  $ in the limit drift, the limit equation is non-linear and depends on the law of the process. Describing invariant measures of this dynamics, their attractiveness and structure is therefore in general  difficult. While the finite system is Markovian and possesses in general at most one single invariant measure, the limit dynamics (\ref{eq:VXNLintro}) may have several invariant measures, some of them being attractive and some  not. In addition, even when the invariant measure is unique, it can be non-attractive and oscillations may appear (see the example in Section~\ref{eq:bistable} below). 

Notice that if we formally take $ U= 0 $ and $ \tau = +\infty,$ then $ \bar X^\mu_t = \bar X^\mu_0 $ for all $t \geq 0, $ such that our model is effectively one-dimensional. In this case, in \cite{CTV}, \cite{MNA} and \cite{cormier2023stability}, the long-time behavior of $ (\bar V_t^\mu)_{t \geq 0}$ has been extensively studied. In particular, it has been shown that both stable and oscillatory behaviors are possible, depending on the model parameters. In this paper, we are interested in the general two-dimensional model. We will show that the two-dimensional structure induces a richer spectrum of possible long-time behaviors than in the simpler one-dimensional case. To do so, we will adapt the approach of \cite{cormier2023stability} to the present setting. 

An important ingredient of this approach is the study of an auxiliary Markov process, the {\it linearized version} of (\ref{eq:VXNLintro}), which is obtained by freezing the non-linearity $J \E ( \bar X^\mu_t f( \bar V^\mu_t))  $ and replacing it by a constant $ \alpha. $ This gives rise to the linearized process $(V^{\alpha  }_t, X^{\alpha  }_t)$ which is the solution of the linear  equation
\begin{eqnarray}\label{eq:valphamuintro}
	\d V^{\alpha }_t & =& b( V^{\alpha}_t)\d t + \alpha  \d t - V^{\alpha}_{t-} \int_{ \R_+} \indic{z \leq f(V^{\alpha}_{t-})} N(\d t, \d z) , \nonumber \\
	\d X^{\alpha }_t &=& \frac{1 - X^{\alpha }_t}{\tau} \d t - U X^{\alpha }_{t-} \int_{ \R_+} \indic{z \leq f(V^{\alpha }_{t-})} N(\d t, \d z) . 
\end{eqnarray}
For any fixed constant $\alpha, $ this defines a two-dimensional Markov process with a first component exhibiting a regenerative structure induced by the reset to $0$ at each spiking time. This Markov process is Harris recurrent converging at exponential speed to its unique invariant probability measure. However, and this is one of the difficulties of not working in one-dimensional state space, the form of the two-dimensional invariant measure is not explicitly known (see Figure~\ref{fig:shape_invariant_distribution} for its typical shape);
\begin{figure}[ht]
\centering
\includegraphics[scale=0.7]{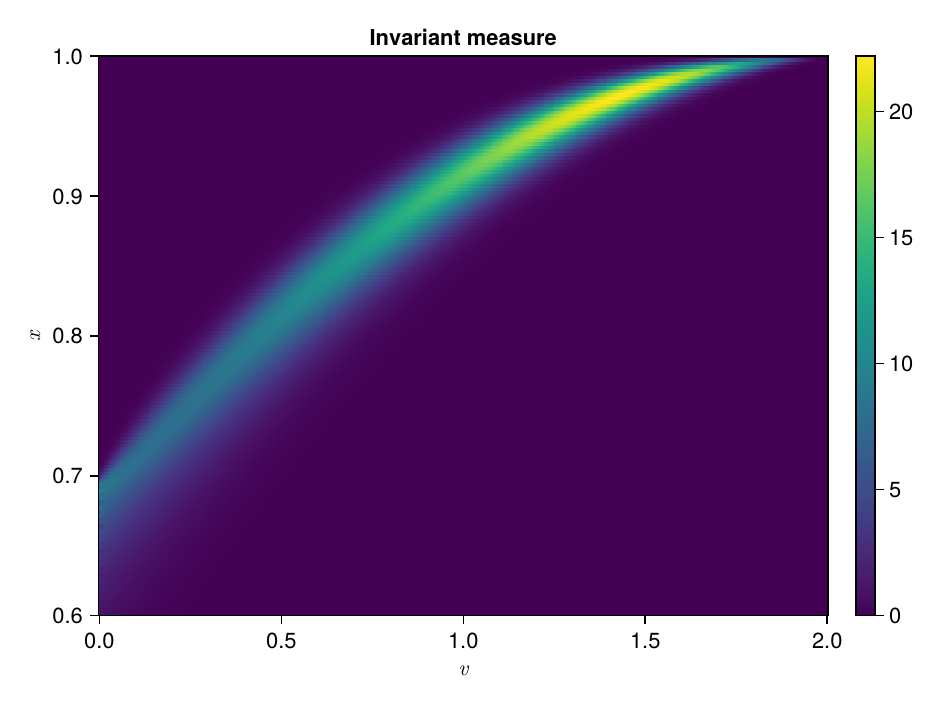}
\caption{The shape of the invariant measure $\mu_\infty(\d v, \d x)$ of $(V^\alpha_t, X^\alpha_t)$. Parameters: $b(v) = 0, \alpha=1$, $\tau=0.5$, $U = 0.3$, $f(v) = \max(0,v)^2$. }
\label{fig:shape_invariant_distribution}
\end{figure}
we only have an explicit expression of the first marginal, due to the regenerative structure of $V^\alpha_t.$ Despite this fact, the specific structure of the interactions and the linearity of the stochastic dynamics of $X^\alpha_t$ enable us to explicitly calculate the limit $ \lim_{ t \to \infty } \E ( X^\alpha_t|V^\alpha_t = v ) .$ This can be done for any fixed value of the constant $ \alpha,$ and it is a crucial step for our analysis. We refer to Proposition~\ref{prop:invm} below for the details.  

Then, we follow the approach of \cite{cormier2023stability} and study local perturbations around any possible invariant state of (\ref{eq:VXNLintro}) by studying the auxiliary Markov process perturbed by an input signal $ \alpha + a_t, $ depending on time. Here, $ a_t $ has to be thought of as a small perturbation. We are able to establish local stability results with respect to such perturbations. To do so, we rely on the {\it bounded Lipschitz} distance for probability measures on $ \R \times [0, 1 ],$ and we study Volterra equations that naturally appear when conditioning with respect to the first jump (spike). 

Our main result, Theorem~\ref{theo:2main}, gives an explicit criterion that enables us to decide if a given invariant probability measure $\mu_\infty(\d v, \d x)$ of (\ref{eq:VXNLintro}) is locally stable or not. This criterion is expressed in terms of the function 
\begin{equation}\label{eq:thetaintro}
\Theta_\alpha(t) := J \int_{\R \times [0,1]} \frac{\d }{\d v} \E_{(v,x)}[X^{\alpha}_t f(V^\alpha_t)] \mu_\infty(\d v, \d x) ,
\end{equation} 
where $ \alpha $ is chosen such that  $ \alpha = J \int_{\R \times [0,1]}  x f( v)  \mu_\infty(\d v, \d x) ,$ and of the zeros of its associated Laplace transform $ \hat \Theta_\alpha(z) = \int_0^\infty e^{ - z t } \Theta_\alpha(t) \d t.$ 

We stress that we are able to calculate $\Theta_\alpha(t)$  explicitly such that we can study concrete examples numerically (we refer to Section~\ref{sec:num} below). In particular we discuss in Section~\ref{sec:5} the following example: choosing
\begin{equation}\label{eq:bistable} b ( v) = 0.05 - v,  f (v) = \max(0,v)^2  , J = 6, U = 0.3 ,  \tau = 30,
\end{equation}
we observe an interesting oscillatory behavior, see Figure~\ref{fig:raster-SOC}.
\begin{figure}[!ht]
    \centering
    \includegraphics[width=0.8\textwidth]{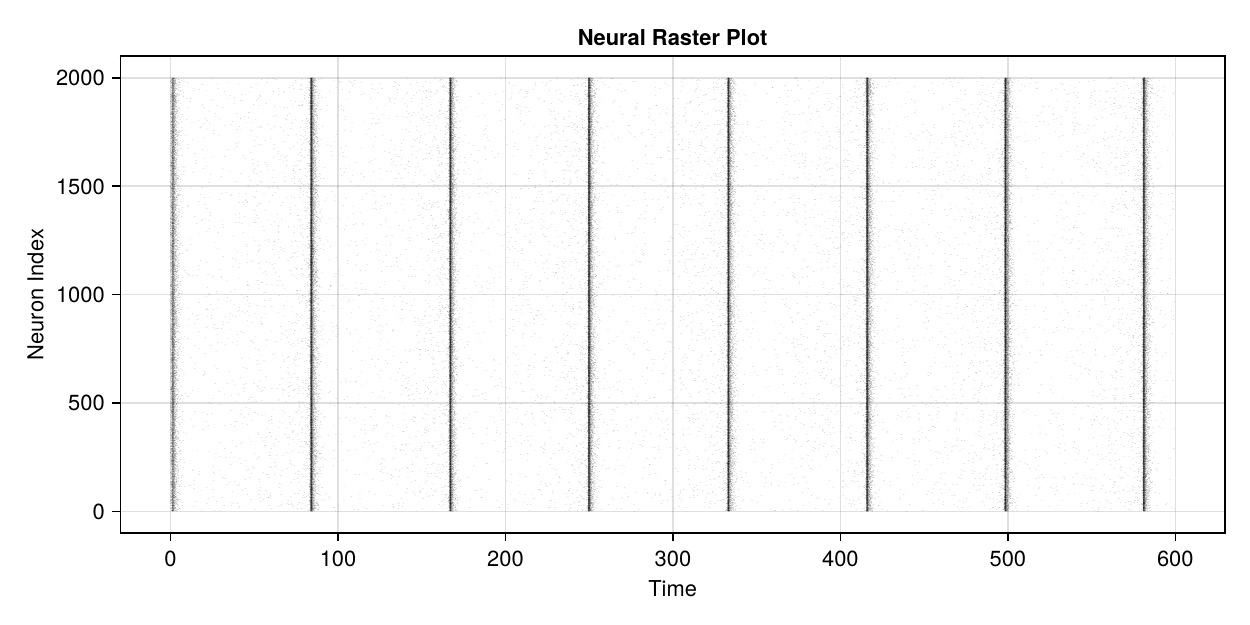}
    \caption{Raster plot of the finite neuron system associated to parameters (\ref{eq:bistable}). Each dot corresponds to a spike of a neuron. We observe a very strong periodic behavior at a frequency that is much lower than the neurons' firing rates.    }
    \label{fig:raster-SOC}
\end{figure}
This example exhibits a bistable behavior -- a feature that would not be observable in the purely one-dimensional case. The slow oscillations observed here are related to a phenomenon called {\it self-organized bistability} known in physics and well described in \cite{PhysRevResearch.2.013318}. We discuss this example in more detail in Section~\ref{sec:num}. 

For the parameters \eqref{eq:bistable}, our analysis shows that the McKean-Vlasov equation~\eqref{eq:VXNLintro} has a unique invariant distribution, corresponding to a value of $\alpha \approx 0.4$. In addition, this invariant distribution is unstable. This is consistent with our main result, Theorem~\ref{theo:2main} (see below), showing that an invariant distribution is (locally) stable provided that the number of solutions of the equation 
\[ \widehat{\Theta}_\alpha(z) = 1, \quad \Re(z) \geq 0, z \in \C, \]
is equal to zero. By the argument principle, this number of solutions is equal to the winding number of the parametric curve $\widehat{\Theta}_\alpha(i \omega), \omega \in \R$, around the point $(1, 0)$ in the complex plane. This winding number is equal to $2$, see Figure~\ref{fig:SOC-nyquist}. Therefore, the equation $\widehat{\Theta}_\alpha(z) = 1$ has two solutions on the half-plane $\{z \in \mathbb{C}: \Re(z) > 0 \}$. This suggests that the invariant distribution is unstable. 

\begin{figure}[!ht]
    \centering
    \includegraphics[width=0.8\textwidth]{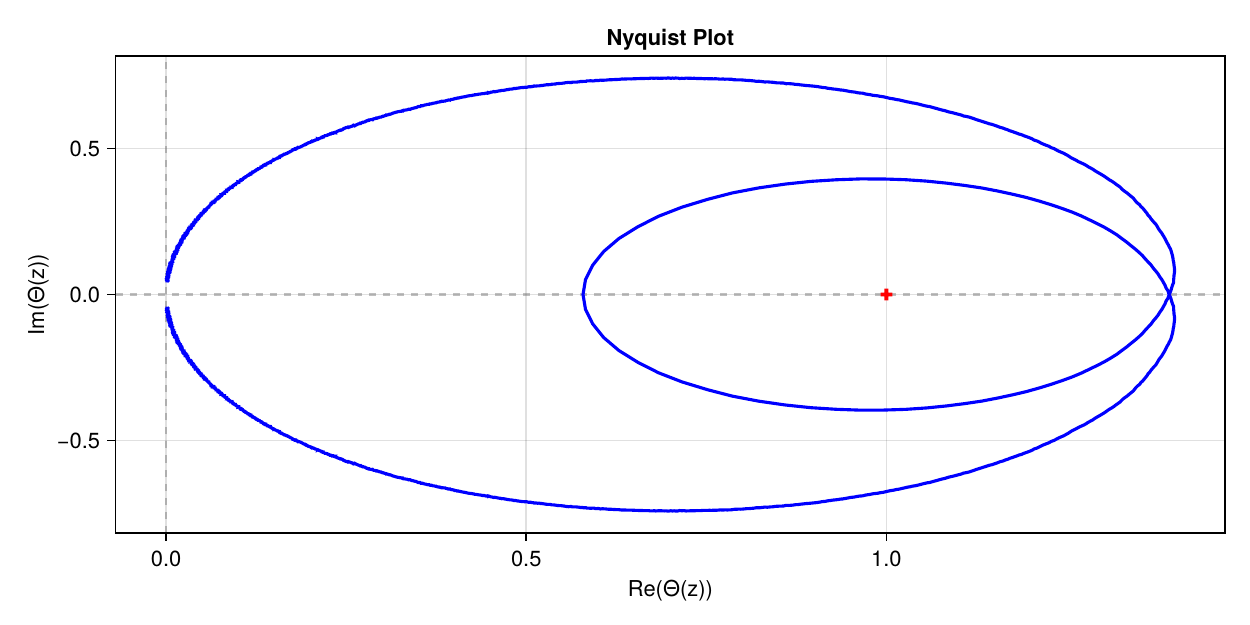}
    \caption{ The Nyquist curve $\widehat{\Theta}_\alpha(i \omega), \omega \in \R$, for the parameters \eqref{eq:bistable} ($\alpha = 0.4$). The winding number of the curve around the point $(1, 0)$ is equal to 2: the equation $\widehat{\Theta}_\alpha(z) = 1$ has therefore two solutions in the half-plane $\Re(z) > 0$. We deduce that the invariant distribution is unstable.}
    \label{fig:SOC-nyquist}
\end{figure}

\subsection{Organization of the paper}
In Section~\ref{sec:2}, we introduce our model and the linearized process and we give the precise model assumptions before stating our main results. We emphasize that Proposition~\ref{prop:invm}, which presents formulas involving the invariant measure of the linearized process, is essential for obtaining our main result, Theorem~\ref{theo:2main}, which gives the precise criterion for the local stability of the invariant measures of (\ref{eq:VXNLintro}) in terms of the Laplace transform of $\Theta_\alpha(t).$ The proofs of our results are gathered in Section~\ref{sec:3}. In Section~\ref{sec:num}, we describe how we obtain the invariant distributions of (\ref{eq:VXNLintro}) numerically and how it is possible to restate our criterion in terms of ordinary differential equations (ODEs). Examples are given in Section~\ref{sec:5}, where we discuss, in particular, the model with parameters (\ref{eq:bistable}) that exhibits a bistable behavior.

\subsection*{Notations}
Throughout this paper, we shall use the following notations. $Lip_1$ denotes the space of all Lipschitz continuous functions $ g : \R \times [0, 1 ] \to \R$ with Lipschitz constant $1$.   We write $\cP ( \R \times [0, 1 ] )$ for the space of probability measures on $ \R \times [0, 1].$ 
For two probability measures $ \nu $ and $\mu $ on $\R \times [0, 1 ], $ we define the bounded-Lipschitz distance
\begin{equation}\label{eq:distanceBL}
d_{BL}(\nu, \mu) = \sup_{g \in Lip_1, \| g\|_\infty \le 1 } \int g(v, x) (\nu-\mu)(\d v, \d x) .
\end{equation}
\section{Statements of the main results}\label{sec:2}
We recall that we are interested in studying the following non-linear equation of McKean-Vlasov type 
\begin{eqnarray}\label{eq:VXNL}
    	\d \bar V^\mu_t & =& %
        b(\bar V^{\mu}_t)\d t + J \E ( \bar X^\mu_t f( \bar V^\mu_t)) \d t - \bar V^\mu_{t-} \int_{ \R_+} \indic{z \leq f(\bar V^\mu_{t-})} N(\d t, \d z) , \nonumber \\
	\d \bar X^\mu_t &=& \frac{1 - \bar X^\mu_t}{\tau} \d t - U \bar X^\mu_{t-} \int_{ \R_+} \indic{z \leq f(\bar V^\mu_{t-})} N(\d t, \d z) ,
\end{eqnarray}
starting from the initial condition $ ( \bar V^\mu_0, \bar X^\mu_0 )= ( V_0, X_0)  \sim \mu$, for $ \mu \in \cP ( \R \times [0, 1 ] )$. In the above equation, $J \in \R $ is the synaptic weight, $U \in (0, 1)$ is a fixed constant, and $ b : \R \to \R , $ and $f: \R \rightarrow \R_+ $ are the drift and the jump rate function, respectively. 

Throughout this paper we shall work with the filtration $({\mathcal F}_t)_{t\geq 0 },$ $ {\mathcal F}_t = \sigma \{ N( A ) : A \in {\mathcal B} ( \R_+ \times \R_+), A \subset [0, t ] \times \R_+  \}\vee \sigma \{ X_0, V_0\}$ where ${\mathcal B}$ denotes the Borel $\sigma$-algebra.

In what follows, we shall write 
\begin{equation}
   \mu_t := \cL (\bar V^\mu_t, \bar X^\mu_t) , \quad  t \geq 0,  
\end{equation}
such that $ \mu_0 = \mu.$ Similarly, we write 
$ \tilde \mu_t := \cL ( \bar V^{\tilde \mu}_t, \bar X^{\tilde \mu}_t) $ for the law of the process starting from the initial law $ \tilde \mu_0 = \tilde \mu.$

\begin{assumption}\label{ass:1}
We assume that $b, f \in C^1 ( \R; \R ) $ with $\| f \|_\infty +  \| f' \|_ \infty  + \| b' \|_\infty < \infty $ and $ b', f'$ are Lipschitz continuous. In particular, $f$ and $b$ are globally Lipschitz continuous, and $f$ is bounded. 
\end{assumption}
We start with the following result on existence of the non-linear equation (\ref{eq:VXNL}).
\begin{proposition}\label{prop:1}
Grant Assumption~\ref{ass:1}. Then the non-linear equation (\ref{eq:VXNL}) possesses a unique strong solution for all $ \mu \in \cP ( \R \times [0, 1]).$ Moreover, for all $ T > 0, $ there exists $C_T > 0 $ such that for all $ 0 \le t \le T,$ for all $ \mu , \tilde \mu \in \cP ( \R \times [0, 1]), $
\begin{equation}
    d_{BL} ( \mu_t, \tilde \mu_t)  \le C_T d_{BL} ( \mu, \tilde \mu ) .  
\end{equation}

\end{proposition}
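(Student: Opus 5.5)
We prove existence and uniqueness with a Picard-type fixed point on the interaction term. For a deterministic input $a \in C([0,T];\R)$, consider the linear (time-inhomogeneous) equation obtained from (\ref{eq:VXNL}) by replacing $J\E(\bar X_t f(\bar V_t))$ with $a_t$:
\begin{equation*}
\d V^{a}_t = b(V^{a}_t)\d t + a_t \d t - V^{a}_{t-} \int_{\R_+} \indic{z \le f(V^{a}_{t-})} N(\d t,\d z),
\end{equation*}
with the same equation for $X^a$ driven by $V^a$. Since $f$ is bounded, jumps occur only at atoms of $N$ in $[0,T]\times[0,\|f\|_\infty]$, which are a.s. finitely many. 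Between these atoms the dynamics is an ODE with Lipschitz coefficients. So $(V^a,X^a)$ is constructed pathwise, is a unique strong solution, and $X^a_t \in [0,1]$ for all $t$. We then define the map
\begin{equation*}
\Phi(a)_t := J\,\E\big[X^a_t f(V^a_t)\big],
\end{equation*}
which is bounded by $|J|\,\|f\|_\infty$. A solution of (\ref{eq:VXNL}) is exactly a fixed point of $\Phi$. It therefore suffices to show that $\Phi$ is a contraction on $C([0,T])$ under a weighted norm $\sup_t e^{-\lambda t}|a_t|$, or equivalently on short time intervals followed by iteration.

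The key estimate compares $\E|\Delta_t|$, where $\Delta_t := (V^a_t - V^{\tilde a}_t,\, X^a_t-X^{\tilde a}_t)$, for two inputs $a, \tilde a$ driven by the same $N$ and the same initial condition. Pathwise $L^1$ stability fails because the reset term $-V_{t-}\indic{z\le f(V_{t-})}$ is discontinuous. When exactly one of the two processes jumps, the gap becomes of order $|V|+|X|$ rather than $|\Delta|$. This is the main obstacle. I would handle it as in the one-dimensional literature (e.g. \cite{CTV}): the probability of such a discrepant jump in $[t,t+\d t]$ is $|f(V^a_t)-f(V^{\tilde a}_t)|\d t \le \|f'\|_\infty |\Delta_t|\d t$. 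The cost of a discrepancy is $|V^a_{t-}|+|V^{\tilde a}_{t-}|+1$. This suggests working with the truncated distance $\E[\min(|\Delta_t|,1)]$ or with $\E[|\Delta_t|\wedge 1]$. The bounded-Lipschitz metric only sees this truncation, and the interaction term $\E[Xf(V)]$ is Lipschitz in it because $f$ and $x$ are bounded and Lipschitz. With the truncation, the jump cost is at most $1$ times the rate $\|f'\|_\infty|\Delta|$, which closes the argument. Combining this with the drift contribution $(\|b'\|_\infty + 1/\tau)|\Delta| + |a_t-\tilde a_t|$ and the common-jump contribution (a common jump sends $V$ to $0$ and multiplies $X$ by $(1-U)$, so it does not increase $|\Delta|$), Gronwall gives
\begin{equation*}
\E[|\Delta_t|\wedge 1] \le C_T \int_0^t |a_s - \tilde a_s|\,\d s .
\end{equation*}
Therefore $|\Phi(a)_t-\Phi(\tilde a)_t| \le C_T\int_0^t|a_s-\tilde a_s|\d s$, and $\Phi$ is a contraction for large $\lambda$. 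This yields existence and uniqueness, and pathwise uniqueness of the nonlinear SDE follows since any solution defines a fixed point.

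For the stability bound, take couplings: choose initial conditions $(V_0,X_0)\sim\mu$ and $(\tilde V_0,\tilde X_0)\sim\tilde\mu$ with $\E[|(V_0,X_0)-(\tilde V_0,\tilde X_0)|\wedge 2]$ comparable to $d_{BL}(\mu,\tilde\mu)$. On $\R\times[0,1]$ the bounded-Lipschitz distance is equivalent to the Wasserstein distance for the metric $|\cdot|\wedge 2$, so by Kantorovich duality an optimal coupling exists. Both solutions are driven by the same $N$, with inputs $a=\Phi$-fixed points. Redoing the Gronwall estimate above with nonzero initial gap gives
\begin{equation*}
\E[|\Delta_t|\wedge1] \le C\big(\E[|\Delta_0|\wedge 1] + \textstyle\int_0^t|a_s-\tilde a_s|\d s\big),
\end{equation*}
together with $|a_t-\tilde a_t|\le C\,\E[|\Delta_t|\wedge1]$. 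A second Gronwall step gives $\E[|\Delta_t|\wedge 1]\le C_T\, d_{BL}(\mu,\tilde\mu)$. Finally, $d_{BL}(\mu_t,\tilde\mu_t)\le 2\E[|\Delta_t|\wedge 1]$ for $1$-Lipschitz functions bounded by $1$, which concludes the proof.
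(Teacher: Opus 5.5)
Your proof is correct, but it reaches the key a priori estimate by a different route than the paper. You obtain
\[ \bigl|\E g(V^{a,\mu}_{0,t},X^{a,\mu}_{0,t})-\E g(V^{\tilde a,\tilde\mu}_{0,t},X^{\tilde a,\tilde\mu}_{0,t})\bigr| \le C_T\Bigl(\int_0^t|a_s-\tilde a_s|\,\d s+d_{BL}(\mu,\tilde\mu)\Bigr) \]
pathwise. You do this by driving both processes with the same $N$, coupling the initial laws optimally, and applying Gronwall to $\E[|\Delta_t|\wedge1]$.

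The paper gets the same inequality analytically. It conditions on the first spike (Lemma~\ref{lem:geng}), after which the state is $(0,(1-U)\psi(x))$. Everything then reduces to deterministic bounds on $H^v_a$ and $K^v_a$, which are Lipschitz in $a$ and $v$. It also uses the Volterra equations for $R^a_{s,t}$, with the $x$-Lipschitz constant $e^{\|f\|_\infty(t-s)}$ obtained by iterating the renewal inequality. It then closes with the same fixed-point/Gronwall step in $a$ that you use, deferring details to \cite{CTV,MNA}.

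What each route buys:
\begin{itemize}
\item Your argument is more elementary and self-contained. Thanks to the truncation it needs no moment assumption on $\mu$, and it does not exploit the regeneration at $0$: any Lipschitz post-spike map would do.
\item The paper's route is heavier, but along the way it produces the linear-in-$x$ representation $R^a_{s,t}(v,x)=S^{a,1}_{s,t}(v)+S^{a,2}_{s,t}(v)\,x$ and the $C^1$ regularity of $(v,x)\mapsto\E_{(v,x)}g(\cdot)$. Both are reused later for $\Theta_\alpha$, for its Laplace transform, and for the Trotter--Kato formula (Proposition~\ref{prop:TK}). A coupling bound alone does not provide these.
\end{itemize}

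One step needs tightening. To close Gronwall on $\E[|\Delta_t|\wedge1]$, the discrepant-jump rate must be bounded by a multiple of $|V_{t-}-\tilde V_{t-}|\wedge 1$, not of $|\Delta_t|$. This follows from $0\le f\le\|f\|_\infty$, which gives $|f(v)-f(\tilde v)|\le(\|f'\|_\infty\vee\|f\|_\infty)(|v-\tilde v|\wedge1)$. The point matters because $\E|\Delta_t|$ may be infinite when the initial law has no first moment.
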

The proof of this proposition is given in Section~\ref{sec:32} below.

The goal of our article is to study the stability of any invariant state of the non-linear equation (\ref{eq:VXNL}). A main tool for this analysis will be an associated linear equation. 
\subsection*{An associated linear equation}
For any fixed $\alpha \in \R$ and any initial condition $ \mu \in \cP ( \R \times [0, 1]), $ we consider $(V^{\alpha , \mu }_t, X^{\alpha , \mu }_t)$ the solution of the linear  equation
\begin{eqnarray}\label{eq:valphamu}
	\d V^{\alpha , \mu}_t & =& b( V^{\alpha, \mu}_t)\d t + \alpha  \d t - V^{\alpha, \mu}_{t-} \int_{ \R_+} \indic{z \leq f(V^{\alpha, \mu}_{t-})} N(\d t, \d z) , \nonumber \\
	\d X^{\alpha, \mu}_t &=& \frac{1 - X^{\alpha, \mu}_t}{\tau} \d t - U X^{\alpha, \mu}_{t-} \int_{ \R_+} \indic{z \leq f(V^{\alpha, \mu}_{t-})} N(\d t, \d z) , 
\end{eqnarray}
with $ {\mathcal L} ( V^{\alpha , \mu }_0, X^{\alpha , \mu }_0) = \mu.$ 
We also write 
$$ Z_t^{\alpha, \mu} =  \int_{[0, t] \times \R_+} \indic{z \leq f(V^{\alpha, \mu}_{s-})} N(\d s, \d z)$$
for the associated jump process.

Whenever $ \mu = \delta_{ ( v, x) }, $ we will write for short $ ( V_t^{\alpha }, X_t^{\alpha  }),$ and we denote $ \P_{( v, x ) } $ and $ \E_{(v, x) } $ the associated probability measure and expectation under which the process $ ( V^{\alpha }, X^{\alpha  })$ starts from $(v,x)$ at time $0.$ 

Since $b$ is Lipschitz, for any fixed $ \alpha \in \R $ and for any initial condition $v \in \R,$ there exists a unique solution $\varphi^\alpha_t(v)$ of the ODE
\begin{equation}\label{eq:varphialpha}
\frac{\d }{\d t} \varphi^\alpha_t(v) = b( \varphi^\alpha_t(v) ) + \alpha, \quad \varphi^\alpha_0(v) = v. 
\end{equation}
We also write $\psi_t(x)$ for the solution of the ODE
\begin{equation}
	\label{eq:psi_t}
\frac{\d }{\d t} \psi_t(x) =  \frac{1 - \psi_t(v)}{\tau},  \quad  \psi_0(x) = x ,
\end{equation}
given by 
\[ \psi_t(x) = 1 + (x-1)e^{-t/\tau}. \]
We now state a set of assumptions that will be needed in the sequel. 

We start with an assumption that will allow us to couple the first spiking times of two systems, one starting from potential value $v, $ the other from potential value $ \tilde v.$

\begin{assumption}\label{ass:fmin}
We suppose that for any $\alpha \in \R$ and for all $v, \tilde{v} \geq 0,$
\[ \lim_{t \rightarrow \infty} \inf_{v, \tilde{v} \in \R_+} \int_0^t f(\varphi^\alpha_s(v)) \wedge f(\varphi^\alpha_s(\tilde{v})) \d s > 0.\]
\end{assumption}
We will also need the following uniform in time Lipschitz continuity of the flow $ \varphi^\alpha _t .$
\begin{assumption}
For any $ \alpha \in \R, $ there exists a constant $ C = C ( \alpha) > 0 $ such that for all $t \geq 0,  $
\begin{equation}\label{eq:Lipflow} \abs{\varphi^\alpha_t(v) - \varphi^\alpha_t(\tilde{v})} \leq C \abs{v - \tilde{v}}. \end{equation}    
\end{assumption}
Finally, we need a control on the growth rate of the accumulated spiking intensity. 
\begin{assumption}\label{ass:minf}
For any $ \alpha \in \R,$ 
\begin{equation}\label{eq:minf}
\liminf_{t \rightarrow \infty} \inf_{v \in \R_+} \frac{1}{t} \int_0^t f(\varphi^\alpha_s(v)) \d s > 0. \end{equation}
\end{assumption}
\begin{theorem}\label{prop:erglin}
	Grant Assumptions~\ref{ass:1} and~\ref{ass:fmin}-- \ref{ass:minf}. Then the process $ (V^\alpha, X^\alpha) $ is positive Harris recurrent and possesses a unique invariant probability measure $ \mu^\alpha_\infty ( \d  v , \d x). $ Moreover, there exist $C_*, \lambda_* > 0$ such that for all probability measures $ \mu, \tilde \mu $ on $ \R \times [0, 1 ], $
\[ d_{BL} ( \cL  (V_t^{ \alpha, \mu }, X_t^{ \alpha , \mu }),  \cL(V_t^{ \alpha, \tilde \mu }, X_t^{ \alpha , \tilde \mu } ))  \le C_* e^{- \lambda_* t } d_{BL} ( \mu , \tilde \mu ). \]
\end{theorem}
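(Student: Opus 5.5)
The plan is a coupling argument. First we bring the potential components to a common spike; then the depression variables contract deterministically. Fix $\alpha$ and write $(V,X)$, $(\tilde V,\tilde X)$ for two copies of \eqref{eq:valphamu} started from $(v,x)$ and $(\tilde v,\tilde x)$, driven by the same Poisson measure $N$. By duality (Kantorovich–Rubinstein for $d_{BL}$, or simply integrating against an optimal coupling of $\mu,\tilde\mu$), it suffices to show
\[ \E\bigl[ \bigl(|V_t-\tilde V_t|+|X_t-\tilde X_t|\bigr)\wedge 2\bigr] \le C_* e^{-\lambda_* t}\bigl( (|v-\tilde v|+|x-\tilde x|)\wedge 2\bigr). \]
Indeed, for $g\in Lip_1$ with $\|g\|_\infty\le1$ we have $|g(V_t,X_t)-g(\tilde V_t,\tilde X_t)|$ bounded by this truncated distance. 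Harris recurrence and uniqueness of $\mu^\alpha_\infty$ then follow from standard arguments: the regeneration at $0$ of $V$ and the contraction make $\mathcal L(V_t,X_t)$ Cauchy in $d_{BL}$, and its limit is invariant and unique.

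\textbf{Step 1: reduction to $v,\tilde v\ge 0$.} Assumption~\ref{ass:minf} shows that the accumulated rate grows linearly, so a spike occurs with high probability. After any spike $V$ restarts at $0$, and for $\alpha$ fixed one sees that after a bounded time the flow lies in a region where Assumptions~\ref{ass:fmin}--\ref{ass:minf} apply. (Starting points with $v<0$ are handled by waiting until the first spike or until the flow enters $\R_+$.) We handle this entrance time crudely, with an exponential moment bound coming from $\|f\|_\infty$ and Assumption~\ref{ass:minf}.

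\textbf{Step 2: coupling of the first common spike.} With the same $N$, both neurons jump simultaneously at atoms $(s,z)$ with $z\le f(\varphi^\alpha_s(v))\wedge f(\varphi^\alpha_s(\tilde v))$. Assumption~\ref{ass:fmin} gives $t_0,p>0$ such that, uniformly over $v,\tilde v\ge0$, the probability of a common spike before $t_0$ that is not preceded by a single spike is bounded below. More precisely, we will control the discrepancy region $f(\varphi_s(v))\vee f(\varphi_s(\tilde v))-f\wedge f\le\|f'\|_\infty C|v-\tilde v|$, using the Lipschitz flow \eqref{eq:Lipflow}. Hence the probability that the two copies separate (one spikes alone) before $t_0$ is at most $C t_0\|f'\|_\infty|v-\tilde v|$. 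On the event of separation we just use the bound $2$. This gives the following dichotomy on $[0,t_0]$:
\begin{itemize}
\item \emph{Common spike:} with probability at least $p$, a common spike occurs, after which $V=\tilde V=0$ forever. The depression variables then satisfy $|X_t-\tilde X_t|\le e^{-(t-s)/\tau}|x-\tilde x|$, since both the flow $\psi$ and the jump map $x\mapsto(1-U)x$ are contractions and the jumps are now simultaneous.
\item \emph{No spike:} both copies follow the flows, staying within $C|v-\tilde v|$ and $|x-\tilde x|$ of each other.
\end{itemize}

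\textbf{Step 3: iteration.} Iterating via the Markov property over blocks of length $t_0$ yields geometric decay $(1-p)^{k}$ of the probability of not yet having coalesced in $V$. Combined with the $e^{-t/\tau}$ contraction in $X$ and the linear-in-distance separation cost, this gives the exponential bound. The truncation at $2$ absorbs the separation cost, because $\min(2, C|v-\tilde v|)\le C'((|v-\tilde v|)\wedge 2)$.

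The main obstacle is making the constants uniform over the starting points: excursions with large $|v|$ and the non-coalesced case in which $V$ and $\tilde V$ stay apart without spiking. This is exactly where Assumptions~\ref{ass:fmin}--\ref{ass:minf} and \eqref{eq:Lipflow} are needed. I would first try a Lyapunov-type estimate showing that, at block times, the probability of being outside a compact set of positive potentials decays exponentially.
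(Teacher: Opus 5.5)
Your Steps~2--3 take a different route from the paper: a purely pathwise one. The paper uses the synchronous coupling and block argument (your Step~3) only to get the \emph{uniform} bound $d_{BL}(P^\alpha_t((v,x),\cdot),P^\alpha_t((\tilde v,\tilde x),\cdot))\le C_*e^{-\lambda_* t}$ and the contraction \eqref{eq:exprate2} from $v=\tilde v=0$ (Proposition~\ref{theo:invm}). It then gets the factor $|v-\tilde v|\wedge 1+|x-\tilde x|$ analytically, from the first-spike decomposition of $\E_{(v,x)}g(V^\alpha_t,X^\alpha_t)$:
\begin{itemize}
\item the no-spike term is bounded using $H^v(t)\le c_*e^{-f_*t}$ and \eqref{eq:Lipflow};
\item the $(K^v-K^{\tilde v})$ term splits into $\langle g,\mu_\infty\rangle (H^{\tilde v}-H^v)(t)$ plus the convolution of $|K^v-K^{\tilde v}|(s)\le Ce^{-f_*s/2}(|v-\tilde v|\wedge 1)$ with the exponentially small deviation $\xi$ from equilibrium;
\item $x$ versus $\tilde x$ is handled by \eqref{eq:exprate2}.
\end{itemize}
In your coupling, the single-spike intensity $|f(\varphi^\alpha_s(v))-f(\varphi^\alpha_s(\tilde v))|$ before the first spike plays the role of $|K^v-K^{\tilde v}|$. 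Your version gives an explicit coupling and a clean passage to general $\mu,\tilde\mu$: you use an optimal coupling for the cost $(|v-\tilde v|+|x-\tilde x|)\wedge 2$, whose Wasserstein distance is $d_{BL}$ up to constants, instead of the paper's ``integrate against $\mu-\tilde\mu$''. The paper's version reuses the renewal machinery of Section~\ref{sec:32} and never has to follow a separated pair.

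Two points need repair. First, ``on the event of separation we just use the bound $2$'' leaves a contribution of order $|v-\tilde v|\wedge 1$ that does not decay in $t$; the truncation at $2$ does not change this. The decay must come from the strong Markov property at the separation time $\sigma$. From any pair of positions in $\R_+$, non-coalescence still decays like $(1-p)^k$, and after $\tau_c$ you have $|X_t-\tilde X_t|\le e^{-(t-\tau_c)/\tau}$. Moreover $\sigma$ precedes the first spike, so $\P(\sigma\in[kt_0,(k+1)t_0))\le Ct_0(1-p)^k|v-\tilde v|$. The resulting convolution, of the form $|v-\tilde v|\sum_{k\le n}(1-p)^k(1-p)^{n-k}=(n+1)(1-p)^n|v-\tilde v|$, is the exact counterpart of the paper's term $T_2$.

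Second, Step~1 cannot be carried out. Assumptions~\ref{ass:fmin}--\ref{ass:minf} only constrain flows started in $\R_+$, and $\|f\|_\infty$ bounds rates from above, not below. So nothing controls the first spike or the entrance into $\R_+$ from $v<0$, and no argument can do this under the stated assumptions. Here is a counterexample:
\begin{itemize}
\item Let $b$ be non-increasing, equal to a constant $\beta$ exactly on an interval $I\subset(-\infty,0)$, strictly decreasing elsewhere, with $b(v)\to\mp\infty$ as $v\to\pm\infty$.
\item Let $f$ vanish on a subinterval $I'$ of the interior of $I$ and be bounded below outside a slightly larger subinterval of $I$.
\end{itemize}
Then Assumptions~\ref{ass:1}--\ref{ass:minf} hold for every $\alpha$: flows from $\R_+$ either stay above $\sup I$ or cross $I$ at speed $|\alpha+\beta|>0$. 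But for $\alpha=-\beta$, every $w\in I'$ is frozen, so $\delta_{(w,1)}$ is invariant besides $\mu^\alpha_\infty$, and two such Dirac masses never get closer.

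The paper's proof has the same blind spot: it applies the block bound from Assumption~\ref{ass:fmin} at arbitrary $V_{kT},\tilde V_{kT}$. So what it, and your Steps~2--3, actually prove is the estimate for initial laws on $\R_+\times[0,1]$. This also needs the tacit requirement that $\R_+$ be preserved by flow and reset (e.g.\ $b(0)+\alpha\ge 0$), which your block iteration uses too. State that restriction and drop Step~1 and the Lyapunov idea. On $\R_+$, Assumption~\ref{ass:fmin} is already uniform, so large potentials need no extra work.
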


The proof of this result is given in Section~\ref{sec:coupling}.
Let $\sigma_{\alpha} := \lim_{t \rightarrow \infty} \varphi^\alpha_t(0)$. By abuse of notation, in what follows we shall write $ [0, \sigma_\alpha ) $ even when $ \sigma_\alpha < 0.$ In the latter case, $ [0, \sigma_\alpha ) $ denotes the set of all $ v $ such that $ \sigma_\alpha < v \le 0.$ 

Although the invariant distribution is not known explicitly, some of its properties can be described analytically.
\begin{proposition}
	\label{prop:invm}
    The invariant probability measure $ \mu_\infty^\alpha ( \d v , \d x) $ has the following properties.
	\begin{enumerate}
		\item Let $\nu^\alpha_\infty( \d v) := \mu^\alpha_\infty(\d v, [0, 1])$ be the first marginal of $\mu^\alpha_\infty$ with respect to $v.$ This measure possesses a Lebesgue density $ \nu^\alpha_\infty ( v) ,$ and it holds that
			\[ \nu^\alpha_\infty(v) = \frac{\gamma(\alpha)}{b(v)+\alpha} \exp \left(-\int_0^v \frac{f(y)}{b(y) + \alpha} \d y \right) \indica{[0, \sigma_\alpha)}(v), \]
			where the normalizing factor $\gamma(\alpha)$ satisfies $\gamma(\alpha) = \int_{\R \times [0, 1]} f(v) \mu^\alpha_\infty(\d v, \d x)$.
		\item Let $M^\alpha(v) := \lim_{t \rightarrow \infty} \E [ X^\alpha_t~|~ V^\alpha_t = v]$. 
			Then
			\[ M^\alpha(v) = 1 - C^\alpha \exp \left( - \frac{1}{\tau} \int_0^v \frac{\d u}{b(u)+ \alpha} \right), \]
            where the constant $C^\alpha$ is given by 
            \begin{equation}\label{eq:C_alpha}
            C^\alpha = U \left( 1 -  (1-U) \int_0^\infty f(\varphi^\alpha_t(0)) \exp \left(-\int_0^t f(\varphi^\alpha_u(0)) \d u \right) e^{-\frac{t}{\tau}} \d t \right)^{-1} .
            \end{equation}
		\end{enumerate}
\end{proposition}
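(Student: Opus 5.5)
We work with the stationary version of $(V^\alpha,X^\alpha)$, i.e.\ started from $\mu^\alpha_\infty$ given by Theorem~\ref{prop:erglin}. We use the regenerative structure: between spikes $V$ follows $\varphi^\alpha$, and after a spike it restarts from $0$. Hence under stationarity $V$ is a deterministic function of the age $a$ (time since the last spike), namely $V=\varphi^\alpha_a(0)$. This holds on $[0,\sigma_\alpha)$, since for a stationary process the initial condition has been forgotten; one can also check directly that the mass outside $[0,\sigma_\alpha)$ decays. Since $b+\alpha$ does not vanish on $[0,\sigma_\alpha)$, the map $a\mapsto \varphi^\alpha_a(0)$ is a strictly monotone bijection from $[0,\infty)$ onto $[0,\sigma_\alpha)$.

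\textbf{Part 1.} The stationary age distribution of a renewal process has density $\gamma\, S(a)$, where $S(a)=\exp(-\int_0^a f(\varphi^\alpha_u(0))\d u)$ is the survival function of the inter-spike interval and $\gamma=1/\E[\text{ISI}]$ is the spike rate. This can be derived from the stationary Fokker--Planck (transport) equation $\partial_v((b+\alpha)\nu)=-f\nu$ on $(0,\sigma_\alpha)$, with boundary flux $(b(0)+\alpha)\nu(0)=\int f\,\d\nu=\gamma$. Changing variables $v=\varphi^\alpha_a(0)$, $\d a=\d v/(b(v)+\alpha)$, and using $\int_0^a f(\varphi^\alpha_u(0))\d u=\int_0^v f(y)/(b(y)+\alpha)\d y$, gives the stated formula. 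Stationarity of the spike count gives $\gamma(\alpha)=\int f\,\d\mu^\alpha_\infty$, which also equals the normalising constant.

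\textbf{Part 2.} Let $X^-_k$ denote the value of $X$ just before the $k$-th spike and $X^+_k=(1-U)X^-_k$ its value just after. Between spikes $X$ follows $\psi$. If the next ISI is $T$, then $X^-_{k+1}=\psi_T((1-U)X^-_k)=1-(1-(1-U)X^-_k)e^{-T/\tau}$. The ISIs are i.i.d.\ with density $f(\varphi^\alpha_t(0))S(t)$ and independent of the past. Taking stationary expectations, with $m:=\E[X^+]$ and $q:=\E[e^{-T/\tau}]=\int_0^\infty f(\varphi^\alpha_t(0))S(t)e^{-t/\tau}\d t$, we get
\[ m=(1-U)\bigl(1-(1-m)q\bigr). \]
Hence $1-m=U/(1-(1-U)q)=C^\alpha$. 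Now condition on $V_t=v$, i.e.\ on age $a=\int_0^v \d u/(b(u)+\alpha)$. Given the age, the current value is $X_t=\psi_a(X^+)$, where $X^+$ is the post-spike value of the last spike. Conditioning on surviving an age $a$ depends only on the current ISI, which is independent of $X^+$. Therefore $\E[X_t\mid V_t=v]=1-(1-m)e^{-a/\tau}$, which is the claimed formula for $M^\alpha$.

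\textbf{Main obstacle.} The delicate step is making rigorous the claim that, in stationarity, the law of $X^+$ at the last spike before $t$ given the age equals the Palm (event-stationary) law of $X^+$, independent of the current ISI. I would handle this with the Palm inversion formula for the stationary marked point process of spikes with marks $X^+_k$. Alternatively, one can avoid Palm theory: write the stationary PDE for $h(v)=\int x\,\mu^\alpha_\infty(v,\d x)$ from the generator applied to $g(v)x$, namely $\partial_v((b+\alpha)h)=-fh+(1-h/\nu)\nu/\tau$. Setting $M=h/\nu$, this reduces to $(b+\alpha)M'=(1-M)/\tau$, with boundary condition $(b(0)+\alpha)h(0)=(1-U)\int xf\,\d\mu^\alpha_\infty$. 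Solving gives the exponential form, and the constant is identified through the renewal computation above. The limit defining $M^\alpha$ is then obtained from the convergence in Theorem~\ref{prop:erglin}.
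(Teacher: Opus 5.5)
Your proof is correct, and for item~2 it takes a genuinely different route from the paper. The paper omits item~1 as standard, and your stationary-age argument is the expected one there.

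The paper never leaves the invariant measure. It integrates $A^\alpha g$ against $\mu^\alpha_\infty$ for the test functions $g(v,x)=e^{-av}x$, $a>0$. Letting $a\to0$ gives the global balance $U\int fM\,\d\nu_\infty=\frac1\tau\int(1-M)\,\d\nu_\infty$. Injectivity of the Laplace transform gives $(b(u)+\alpha)\nu_\infty(u)M(u)=\int_u^\infty[fM-(1-M)/\tau]\,\nu_\infty\,\d v$, which is differentiated to obtain $(b+\alpha)M'=(1-M)/\tau$. The constant then comes from inserting the exponential form into the global balance and substituting $v=\varphi^\alpha_t(0)$.

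Your fallback PDE for $h=M\nu_\infty$ is exactly this argument in differential form. You use the boundary flux at $v=0$ in place of the global balance; integrating your PDE over $(0,\sigma_\alpha)$ shows the two are equivalent. In that route the boundary condition alone, $1-C=(1-U)(1-Cq)$ in your notation, already fixes $C^\alpha$, so no renewal input is needed.

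Your main route works instead with the embedded chain of post-spike values and Palm inversion. It yields the functional form and the constant in one stroke, with a clear meaning: $1-C^\alpha$ is the Palm mean of $X$ just after a spike. It also shows that $M$ depends on $v$ only through the age $a(v)$, along which $X$ recovers deterministically via $\psi_{a}$. The price is the machinery you flag yourself:
\begin{itemize}
\item a two-sided stationary version of the process;
\item identifying the Palm law of the marks with the stationary law of the contracting chain $X^+_{k+1}=(1-U)\psi_{T_{k+1}}(X^+_k)$;
\item independence, under the Palm measure, of the running inter-spike interval from the last mark.
\end{itemize}
The paper's test-function argument sidesteps all of this.

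One small point: the $d_{BL}$-convergence of Theorem~\ref{prop:erglin} does not by itself give pointwise convergence of $\E[X^\alpha_t\mid V^\alpha_t=v]$, since conditional expectations are not continuous under weak convergence. What you, and the paper, actually compute is the disintegration $\int x\,\mu^\alpha_\infty(v,\d x)$. If you want the limit literally, the right tool is the renewal structure you already use: condition on the last spike time $t-a(v)$.
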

The proof of this result is given in Section~\ref{sec:proofinvms}. We mention in passing that the formula~\eqref{eq:C_alpha} is reminiscent of similar formulas presented in \cite{Romani2006} and \cite{FonteSchmutz2022}.

\subsection*{Stability of invariant measures of the original non-linear equation}
Let now $\mu_\infty(\d v, \d x)$ be an invariant probability measure of the McKean-Vlasov equation~\eqref{eq:VXNL}.
Let 
\[ \alpha = J \int_{\R \times [0,1]} x f(v) \mu_\infty(\d v, \d x). \]
Then $\mu_\infty = \mu_\infty^\alpha$ is the unique invariant probability measure of the linear process $(V^\alpha_t, X^\alpha_t)$ for this particular choice of $ \alpha. $
Reciprocally, if $\alpha \in \R$ satisfies the fixed-point condition $\alpha = J \int_{\R} f(v) M(v) \nu^\alpha_\infty(v) \d v ,$ then the corresponding $\mu^\alpha_\infty$ is an invariant distribution of the non-linear equation~\eqref{eq:VXNL}. Using this argument, we obtain the following existence result. 
\begin{proposition}
	\label{prop:atleastone}
	The non-linear equation~\eqref{eq:VXNL} has at least one invariant distribution.
\end{proposition}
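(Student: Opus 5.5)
Following the fixed-point reformulation stated just before the proposition, we look for a zero of
\[ H(\alpha) := \alpha - J\, G(\alpha), \qquad G(\alpha) := \int_{\R} f(v) M^\alpha(v)\, \nu^\alpha_\infty(v)\, \d v = \int_{\R\times[0,1]} x f(v)\, \mu^\alpha_\infty(\d v, \d x). \]
The second expression for $G$ is obtained by conditioning on $v$ under $\mu^\alpha_\infty$, using Proposition~\ref{prop:invm}. The key a priori bound is immediate: $x \in [0,1]$ and $0 \le f \le \|f\|_\infty$ give $0 \le G(\alpha) \le \|f\|_\infty$ for every $\alpha \in \R$.

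Consequently $H(\alpha) \ge \alpha - |J|\,\|f\|_\infty > 0$ for $\alpha > |J|\,\|f\|_\infty$, and $H(\alpha) \le \alpha + |J|\,\|f\|_\infty < 0$ for $\alpha < -|J|\,\|f\|_\infty$. If $H$ is continuous, the intermediate value theorem gives $\alpha^*$ with $H(\alpha^*) = 0$. Then $\mu^{\alpha^*}_\infty$ is an invariant distribution of \eqref{eq:VXNL}: the solution of \eqref{eq:VXNL} started from $\mu^{\alpha^*}_\infty$ coincides with the linear process, because the drift term stays constant equal to $\alpha^*$ along the stationary flow. This is guaranteed by uniqueness in Proposition~\ref{prop:1}.

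The main obstacle is the continuity of $\alpha \mapsto G(\alpha)$, which I would prove by a stability argument.
\begin{enumerate}
\item Fix $\alpha_0$ and a compact neighborhood $K$ of $\alpha_0$. Start both linear processes from $\delta_{(0,1)}$ and use the same Poisson measure $N$.
\item For $\alpha, \alpha' \in K$ and a fixed horizon $t$, Gronwall on the flows gives $|\varphi^\alpha_s(v) - \varphi^{\alpha'}_s(v)| \le s e^{\|b'\|_\infty s} |\alpha - \alpha'|$.
\item The spike times of the two processes differ only on an event of probability at most $C_t |\alpha - \alpha'|$, controlled by $\int_0^t \E|f(V^\alpha_s) - f(V^{\alpha'}_s)|\, \d s$. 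Off this event the $X$-components coincide and the $V$-components are close. Hence $|\E[X^\alpha_t f(V^\alpha_t)] - \E[X^{\alpha'}_t f(V^{\alpha'}_t)]| \le C_t |\alpha - \alpha'|$.
\item By Theorem~\ref{prop:erglin}, $|\E[X^\alpha_t f(V^\alpha_t)] - G(\alpha)| \le C_* e^{-\lambda_* t} \max(\|f\|_\infty, \|f'\|_\infty)$, since $(v,x) \mapsto x f(v)$ is bounded Lipschitz on $\R \times [0,1]$ (after normalising).
\item This requires $C_*, \lambda_*$ to be uniform over $\alpha \in K$. I would check this in the coupling proof of Section~\ref{sec:coupling}: the constants there come from Assumptions~\ref{ass:fmin}--\ref{ass:minf} and should be uniform on compacts, since $\varphi^\alpha$ depends continuously on $\alpha$.
\item Choosing $t$ large and then $\alpha'$ close to $\alpha$ yields continuity of $G$.
\end{enumerate}

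Alternatively, one can avoid the uniformity issue and use the explicit formulas of Proposition~\ref{prop:invm}. The density $\nu^\alpha_\infty$, the constant $\gamma(\alpha)$, the function $M^\alpha$ and $C^\alpha$ all depend continuously on $\alpha$ by dominated convergence. For the integral in \eqref{eq:C_alpha}, the factor $e^{-t/\tau}$ provides domination, and the denominator is at least $U > 0$ since that integral is at most $1$. The delicate points are then the normalisation near $\sigma_\alpha$ and values of $\alpha$ where $b(v) + \alpha$ vanishes on $[0, \sigma_\alpha)$. There I would rewrite $\int f M^\alpha \nu^\alpha_\infty$ via the regenerative (renewal-reward) representation:
\[ G(\alpha) = \frac{\E_{(0,\cdot)}[\text{sum of } X \text{ at spikes in one cycle}]}{\E[\text{cycle length}]} = \frac{1/C^\alpha}{\E[\text{cycle length}]}, \qquad \E[\text{cycle length}] = \int_0^\infty e^{-\int_0^t f(\varphi^\alpha_u(0))\,\d u}\, \d t. \]
Assumption~\ref{ass:minf} bounds the integrand in the cycle length by an exponential, uniformly for $\alpha$ in compacts. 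This makes continuity in $\alpha$ a direct consequence of dominated convergence.
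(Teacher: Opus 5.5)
Your overall strategy is the paper's. The paper also bounds $G$ in $[0,\|f\|_\infty]$ and proves continuity of $\alpha\mapsto G(\alpha)$ from the explicit formulas rewritten along the flow $v=\varphi^\alpha_t(0)$, namely
\[ \frac{1}{\gamma(\alpha)}=\int_0^\infty e^{-\int_0^t f(\varphi^\alpha_u(0))\,\d u}\,\d t,\qquad G(\alpha)=\gamma(\alpha)\Bigl[1-C^\alpha\,\widehat{K}_\alpha(1/\tau)\Bigr], \]
and then applies a one-dimensional fixed point argument (Brouwer in the paper, which is your intermediate value argument). So your second route is essentially the paper's proof. The time parametrisation is exactly what removes the degeneracies near $\sigma_\alpha$ that you were worried about. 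The uniform exponential domination of the integrand for $\alpha$ in compacts is not literally Assumption~\ref{ass:minf}, which is pointwise in $\alpha$. It follows from that assumption via the uniform-in-$v$ Gronwall bound on the flows over a fixed time window, which the paper also leaves implicit.

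The renewal--reward identity you display is wrong, however. Since $\widehat K_\alpha(1/\tau)<1$, your formula gives
\[ \frac{\gamma(\alpha)}{C^\alpha}=\gamma(\alpha)\,\frac{1-(1-U)\widehat K_\alpha(1/\tau)}{U}>\gamma(\alpha). \]
But $X\le 1$ forces $G(\alpha)\le\int f\,\d\nu^\alpha_\infty=\gamma(\alpha)$. The flaw is that $X$ does not regenerate at spikes, so there is no i.i.d. reward per interspike cycle. Instead, the stationary mean of $Y_n:=X_{T_n-}$ must be computed from the recursion $Y_{n+1}=1-\bigl(1-(1-U)Y_n\bigr)e^{-S_{n+1}/\tau}$, with $S_{n+1}$ independent of $Y_n$ and of density $K_\alpha$. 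This gives the correct numerator
\[ \E Y=\frac{1-\widehat K_\alpha(1/\tau)}{1-(1-U)\widehat K_\alpha(1/\tau)}=1-C^\alpha\widehat K_\alpha(1/\tau)=\frac{1-C^\alpha}{1-U}, \]
which agrees with the paper's formula, rather than $1/C^\alpha$. The corrected expression is again a continuous function of $\gamma(\alpha)$ and $\widehat K_\alpha(1/\tau)$, so your dominated convergence argument survives once the formula is fixed. As written, though, you would be proving continuity of the wrong function.

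Your first route (synchronous coupling plus ergodicity constants uniform in $\alpha$) is a genuinely different and heavier argument. It is plausible, since step~4 only needs the second bound in \eqref{eq:exprate}. That bound comes from the exponential moment of the coupling time, whose constants are uniform on compacts in $\alpha$ by the same Gronwall estimate applied to Assumption~\ref{ass:fmin}. However, you only assert this uniformity, and the explicit-formula route avoids the issue entirely.
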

\begin{proof}
	The assertion follows from the continuity of the function $\alpha \mapsto  J \int_{\R} f(v) M(v) \nu^\alpha_\infty(v) \d v$ and the fact that $f$ and $M$ are bounded.
The continuity of this function is readily seen from the following formulas, which follow from change of variables:
\begin{align*}
&	\frac{1}{\gamma(\alpha)} = \int_0^\infty  \exp\left( -\int_0^t f(\varphi^\alpha_u (0)  ) \d u \right) \d t, \\  
&	\int_{\R} f(v) M(v) \nu^\infty_\alpha(v) \d v = \gamma(\alpha) \left[ 1 - C^\alpha \int_0^\infty K_\alpha(t) e^{-\frac{t}{\tau}} \d t \right]  , 
\end{align*}
where $K_\alpha(t) := f(\varphi^\alpha_t(0)) \exp\left( -\int_0^t f(\varphi^\alpha_u (0)  )  \d u \right)$.
	Therefore, the Brouwer fixed point theorem applies, giving the existence of a fixed-point of this function, and consequently of an invariant distribution.
\end{proof}
In what follows we fix $ b, f, \alpha $ such that Assumptions~\ref{ass:fmin}--\ref{ass:minf} are satisfied and such that $\mu_\infty = \mu_\infty^\alpha.$ 
We want to study the local stability of the invariant measure $ \mu_\infty.$ A strategy for doing so consists of comparing the process $ ( V^{\alpha, \mu_\infty}, X^{ \alpha, \mu_\infty } ),$ the linearized process evolving in stationary regime, with small perturbations thereof, where the fixed input signal $ \alpha $ is replaced by local, time-dependent alternatives of the form $ \alpha + a_t$ with small $a_t $. This comparison will be achieved by evaluating the difference between the Markov generators of the two processes. Comparing these two generators applied to the test function $ g(v,x) = f(v) x $ (the function determining the non-linearity of our equation), 
the important quantity for the study of the local stability turns out to be
\begin{equation}\label{eq:theta}
\Theta_\alpha(t) := J \int_{\R \times [0,1]} \frac{\d }{\d v} \E_{(v,x)}[X^{\alpha}_t f(V^\alpha_t)] \mu_\infty(\d v, \d x) 
\end{equation} 
(see Proposition~\ref{prop:TK} below). 

Let us briefly discuss why $\Theta_\alpha(t)$  is well defined. Applying Theorem~\ref{prop:erglin} to the test function $ g(v, x) = f(v) x, $ we see that for any $v \neq \tilde v, $
\begin{eqnarray*}
    \frac{\left| \E_{(v,x)}[X^{\alpha}_t f(V^\alpha_t)]- \E_{(\tilde v ,x)}[X^{\alpha}_t f(V^\alpha_t)]\right|}{ | v - \tilde v |}  &\le &C_* e^{ - \lambda_* t } \| f \|_{Lip} \vee \|f\|_\infty d_{BL} ( \delta_{ ( v, x)} , \delta_{(\tilde v, x)} ) \frac{1}{ | v - \tilde v |}\\
    &=& C_* e^{ - \lambda_* t } \| f \|_{Lip} \vee \|f\|_\infty, \end{eqnarray*}
since $d_{BL} ( \delta_{ ( v, x)} , \delta_{(\tilde v, x)} ) = | v - \tilde v |.$

We will show in Remark~\ref{rem:315} below that $\frac{\d }{\d v} \E_{(v,x)}[X^{\alpha}_t f(V^\alpha_t)] $ is well defined. So the above argument implies that 
$$ \frac{\d }{\d v} \E_{(v,x)}[X^{\alpha}_t f(V^\alpha_t)] \le C_* e^{ - \lambda_* t } \| f \|_{Lip} \vee \|f\|_\infty , $$
such that for all $ \lambda_\alpha \in (0, \lambda_*), $ 
$$ \sup_{ t \geq 0} e^{ \lambda_\alpha t } | \Theta_\alpha(t) | < \infty . $$
Therefore, the associated Laplace transform 
$$ \hat{\Theta}_\alpha (z) := \int_0^\infty e^{ - zt} \Theta_\alpha (t) \d t $$ 
is well defined on $ D_\alpha = \{ z \in \C, \quad \Re(z) > -\lambda_\alpha \}.$ In Appendix~\ref{app:spectral_heuristic}, we present an intuitive explanation for why the local stability of the non-linear equation is related to the complex roots of the equation $1 - \hat{\Theta}_\alpha (z)=0$. Having all this in mind, we are now able to state our main result. 
\begin{theorem}\label{theo:2main}
Grant Assumptions~\ref{ass:fmin}-- \ref{ass:minf} and assume moreover that there exists $0 < \lambda_\alpha'   \le \lambda_\alpha $ such that
	\[ \forall z \in \C, \quad \Re(z) > -\lambda' _\alpha \implies \hat{\Theta}_\alpha(z) \neq 1. \] 
	Then, $\mu_\infty$ is (locally) stable. More precisely, there exist $ C, \varepsilon  > 0 $ and $ 0 < \lambda < \lambda_\alpha' , $ such that for any  $ \mu_0 \in \cP  ( \R \times [0, 1 ] ) $ with $ d_{BL} ( \mu_0, \mu_\infty ) \le \varepsilon,  $ we have 
    \begin{equation}
        d_{BL} ( \mu_t , \mu_\infty ) \le C e^{ - \lambda t } d_{BL} ( \mu_0, \mu_\infty ) , \end{equation}
        for all $ t \geq 0, $  where $ \mu_t = \cL  (V_t, X_t)  $ with initial law $ \mu_0 = \cL  (V_0, X_0). $
\end{theorem}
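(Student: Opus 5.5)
The strategy is the one of \cite{cormier2023stability}: reduce the non-linear dynamics to a scalar non-linear Volterra equation for the interaction term, and close that equation by a fixed-point argument in an exponentially weighted space.

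\emph{Reduction to the interaction input.} Given $\mu_0$, let $(\bar V_t,\bar X_t)$ solve (\ref{eq:VXNL}) and set $a_t := J\,\E[\bar X_t f(\bar V_t)] - \alpha$. Then $(\bar V,\bar X)$ coincides with the linear process driven by the time-dependent input $\alpha + a_t$, started from $\mu_0$. For a generic bounded continuous input $(a_s)$, write $\mu^{a}_t$ for the law of this time-inhomogeneous Markov process. The non-linear solution is then characterised by the closure relation
\[ a_t = J\int x f(v)\,\mu^{a}_t(\d v,\d x) - \alpha . \]

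\emph{Linearisation around $\mu_\infty$.} Let $P^\alpha_{s,t}$ denote the semigroup of the linear process with constant input $\alpha$. By Duhamel's formula, the generators of the processes with inputs $\alpha+a_s$ and $\alpha$ differ by $a_s\,\partial_v$. Hence, for $g(v,x) = x f(v)$,
\[ \int g\,\d\mu^a_t = \int P^\alpha_t g\,\d\mu_0 + \int_0^t a_s \int \partial_v\big(P^\alpha_{t-s} g\big)\,\d\mu^a_s\,\d s . \]
Replacing $\mu^a_s$ by $\mu_\infty$ in the last integral and using invariance, I expect (this is Proposition~\ref{prop:TK}) the representation
\[ a_t = \int J\,P^\alpha_t g\,\d(\mu_0-\mu_\infty) + \int_0^t \Theta_\alpha(t-s)\,a_s\,\d s + R_t(a), \]
where the remainder is
\[ R_t(a) = J\int_0^t a_s \int \partial_v P^\alpha_{t-s} g\,\d(\mu^a_s-\mu_\infty)\,\d s . \]
The remainder is quadratic: it is bounded by $|a_s|$ times $d_{BL}(\mu^a_s,\mu_\infty)$, times $e^{-\lambda_*(t-s)}$. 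This last estimate requires that $\partial_v P^\alpha_u g$ be bounded and Lipschitz in $(v,x)$ with exponential decay. I would obtain it from the coupling behind Theorem~\ref{prop:erglin} together with a derivative computation (Remark~\ref{rem:315}) conditioned on the first spike, where the reset regenerates $V$ at $0$.

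\emph{Solving the Volterra equation.} The hypothesis $\hat\Theta_\alpha(z) \neq 1$ on $\Re z > -\lambda'_\alpha$ lets me apply the Paley--Wiener theorem for Volterra equations. It yields a resolvent $r_\alpha$ with $r_\alpha = \Theta_\alpha + \Theta_\alpha * r_\alpha$ and $\sup_t e^{\lambda t}|r_\alpha(t)| < \infty$ for every $\lambda < \lambda'_\alpha$. Hence $a = h + r_\alpha * h$ with $h_t = J\int P^\alpha_t g\,\d(\mu_0-\mu_\infty) + R_t(a)$. The first term of $h$ is $O\big(e^{-\lambda_* t}\, d_{BL}(\mu_0,\mu_\infty)\big)$ by Theorem~\ref{prop:erglin}.

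\emph{Bounding the law by the input.} I also need the stability estimate
\[ d_{BL}(\mu^a_t,\mu_\infty) \le C e^{-\lambda t} d_{BL}(\mu_0,\mu_\infty) + C\int_0^t e^{-\lambda(t-s)}|a_s|\,\d s . \]
To prove it I would again use Duhamel's formula, with the bounded-Lipschitz bound on $\partial_v P^\alpha_u h$ for $h \in Lip_1$ with $\|h\|_\infty \le 1$.

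\emph{Closing the argument.} Work in the norm $\|a\|_\lambda = \sup_t e^{\lambda t}|a_t|$ and combine the two previous steps. This gives
\[ \|a\|_\lambda \le C\,d_{BL}(\mu_0,\mu_\infty) + C\|a\|_\lambda\big(d_{BL}(\mu_0,\mu_\infty) + \|a\|_\lambda\big). \]
A continuity (bootstrap) argument on $[0,T]$ then applies, using Proposition~\ref{prop:1} for finite-time continuity and the smallness $\varepsilon$. It yields $\|a\|_\lambda \le 2C\,d_{BL}(\mu_0,\mu_\infty)$, and plugging this into the law estimate gives the claim.

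\emph{Main obstacle.} The hardest step is the uniform, exponentially decaying bound on $\partial_v P^\alpha_t h$ in bounded-Lipschitz norm for the two-dimensional process. The difficulty is that $\partial_v$ acts through the spike time, which also shifts $X$. I would first try to differentiate the renewal (first-spike) Volterra equation in $v$ and then use the coupling of first spikes given by Assumption~\ref{ass:fmin} to obtain the decay.
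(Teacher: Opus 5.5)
Your proposal is correct, and its linear ingredients are the paper's: the Duhamel/Trotter--Kato formula of Proposition~\ref{prop:TK}, the kernel $\Theta_\alpha$ obtained by replacing $\mu^a_s$ with $\mu_\infty$, the Paley--Wiener resolvent, and the input-to-law bound of Lemma~\ref{lem:318}. The difference is in how you close the quadratic remainder.

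The paper controls the remainder only on a finite horizon. For $t\le T$ it bounds it by $C_T\, d_{BL}(\mu_0,\mu_\infty)^2$, using only two facts: $\partial_v P^\alpha_u(xf)$ is bounded and Lipschitz for $u\le T$ (Remark~\ref{rem:315}), and Proposition~\ref{prop:1} bounds $|k_s|$ and $d_{BL}(\mu_s,\mu_\infty)$ by $C_T\, d_{BL}(\mu_0,\mu_\infty)$. This gives $d_{BL}(\mu_t,\mu_\infty)\le C_\lambda e^{-\lambda t}d_{BL}(\mu_0,\mu_\infty)+C_T\, d_{BL}(\mu_0,\mu_\infty)^2$ on $[0,T]$. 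Choosing $T$ first and then $\varepsilon$ yields a contraction at time $T$, which is iterated through the flow property of the autonomous McKean--Vlasov equation.

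Your global weighted-norm bootstrap avoids the iteration and gives $\sup_t e^{\lambda t}|a_t|\le C\, d_{BL}(\mu_0,\mu_\infty)$ directly. The price is uniform-in-time exponential decay of the full bounded-Lipschitz norm of $\partial_v P^\alpha_u(xf)$. That is exactly the obstacle you flag, and the paper's route never needs it: it only uses decay of $\|\partial_v P^\alpha_u(xf)\|_\infty$, which follows from Theorem~\ref{prop:erglin}.

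You can in fact bypass that obstacle. For probability measures, $|\int h\,\d(\mu^a_s-\mu_\infty)|\le \min(2\|h\|_\infty,\ \max(\|h\|_\infty,\mathrm{Lip}(h))\, d_{BL}(\mu^a_s,\mu_\infty))$. Interpolate with a small exponent $\theta\in(0,1)$ between the sup-norm decay and a Lipschitz bound that need not decay; polynomial growth in $u$, which the formulas of Remark~\ref{rem:315} give, is enough. This produces a remainder of order $\|a\|_\lambda\,(d_{BL}(\mu_0,\mu_\infty)+\|a\|_\lambda)^\theta e^{-\lambda t}$. That is still superlinear, so your continuity argument closes: $\|a\|_{\lambda}$ can never reach $2C\, d_{BL}(\mu_0,\mu_\infty)$ once $\varepsilon$ is small.

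One minor fix: in your input-to-law estimate, keep the kernel rate $\lambda_*$ of Lemma~\ref{lem:318} strictly larger than the weight $\lambda$. With the same $\lambda$ in both places, the final bound picks up a factor $(1+t)$.
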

\begin{remark}
	It is possible to compute explicitly the value of $\widehat{\Theta}_\alpha(z).$ This remarkable fact will be explained below, see Lemma~\ref{lem:explicit_value_LTheta}. 
\end{remark}

\section{Proofs}\label{sec:3}
\subsection{Notations}
In what follows we shall study linearized versions of the equation (\ref{eq:VXNL}), where we replace the constant $ \alpha $ by some time-dependent input flow $ a_t.$ So given some function 
$a \in C(\R_+; \R)  $ and $ \mu \in \cP ( \R \times [0, 1]), $ we consider $(V^{a, \mu }_{s,t}, X^{a, \mu }_{s, t})_{ 0 \le s \le t < \infty} $ the solution of the linear non-homogeneous equation
\begin{eqnarray}\label{eq:nonhom}
	 V^{a, \mu}_{s,t} & =& \int_s^t b( V^{a, \mu}_{s, r})\d r + \int_s^t a_r  \d r - \int_{ [ s, t ] \times \R_{+} } V^{a, \mu}_{(s,r-)}  \indic{z \leq f(V^{a, \mu}_{s, r-})} N(\d r, \d z) , \nonumber \\
	X^{a, \mu}_{s, t} &=& \int_s^t \frac{1 - X^{a, \mu}_{s, r}}{\tau} \d r - U \int_{ [ s, t ] \times \R_{+} } X^{a, \mu}_{s, r-}    \indic{z \leq f(V^{a, \mu}_{s, r-})} N(\d r, \d z) , 
\end{eqnarray}
for all $ t \geq s, $ with $ {\mathcal L} (V^{a, \mu }_{s, s}, X^{a, \mu }_{s,s}) = \mu.  $

We associate to the above non-homogeneous equation the deterministic flow $\varphi^a_{s,t}(v), 0 \le s \le t < \infty, $ solution of 
\begin{equation}
\frac{\d }{\d t} \varphi^a_{s,t}(v) = b( \varphi^a_{s,t}(v))  + a_t , \quad \varphi^a_{s, s}(v) = v. 
\end{equation}
Moreover, we write 
$$ K_a^\mu ( s, t) = \int_{\R \times [0, 1 ]} f( \varphi_{s, t}^a (v) ) \exp \left( - \int_s^t f( \varphi^a_{s, u } ( v) ) \d u \right) \mu (\d v, \d x ) $$
for the density of the first jump time of $(V^{a, \mu }_{s,t}, X^{a, \mu }_{s, t})_{ t \geq s },$ and $$
H_a^\mu ( s, t) = \int_{\R \times [0, 1 ]}  \exp \left( - \int_s^t f( \varphi^a_{s, u } ( v) ) \d u \right) \mu (\d v, \d x ). $$
Notice that $ H_a^\mu$ and $ K_a^\mu$ depend on $\mu $ only through its first marginal $ \mu ( \d v , [0, 1 ] ). $ Whenever $ \mu = \delta_{(v, x)}, $ we write for short $ H_a^v $ instead of $H_a^{ \delta_{ (v, x) }}$ and $ K_a^v $ instead of $K_a^{ \delta_{ (v, x) }}.$

If the input flow is constant, that is, if there exists some $ \alpha \in \R $ such that $ a_t = \alpha  $ for all $ t \geq 0, $ then we write as before $(V^{\alpha, \mu }_t, X^{\alpha, \mu}_t)$ for the associated time homogenous Markov process. If moreover $ \mu = \delta_{(v,x)}, $ we write for short $ (V^\alpha_t, X^\alpha_t) $ and 
$$ H^v_\alpha (t) = H^{\delta_{(v, x) }}_\alpha (0, t), K^v_\alpha ( t) =K^{\delta_{(v, x ) } }_\alpha (0, t), \varphi_t^\alpha( v) = \varphi_{0, t }^\alpha ( v) ,$$
which do not depend on $ x . $ Recall that in this latter case, $ \P_{(v,x) } $ denotes the probability measure under which the process $(V^\alpha_t, X^\alpha_t) $ starts from $ (V^\alpha_0, X^\alpha_0) = ( v, x) .$ Finally, $ \E_{(v,x)}$ denotes the corresponding expectation and $P^\alpha _t ( (v,x), \cdot ) = {\mathcal L} ( (V_t^\alpha, X_t^\alpha ) | \P_{(v,x) }) $ the associated transition semigroup. 

\subsection{Proof of Proposition~\ref{prop:1}}\label{sec:32}

We first state a lemma without proof which follows along the lines of the proof of Lemma 3.1 in \cite{MNA}. 

\begin{lemma}
Grant Assumption~\ref{ass:1} and fix $ T > 0.$ Then there exists a constant $C_T$ such that for all bounded functions $ g \in Lip_1,$ for all $ a, \tilde a \in C ( \R_+;  \R), $ for all $ 0 \le s \le t \le T, $
\begin{multline*}
| \int_{ \R \times [0, 1 ]} g ( \varphi_{s, t }^a ( v) , \psi_{t-s} (x) ) H_a^{v} (s, t) \mu ( \d v, \d x) - \int_{ \R \times [0, 1 ]} g ( \varphi_{s, t }^{\tilde a} ( v) , \psi_{t-s} ( x)  ) H_{\tilde a}^{v} (s, t) \tilde \mu ( \d v, \d x)    | \\
\le C_T ( 1 + \|g\|_\infty ) \left( \int_s^t | a_u - \tilde a_u | \d u + d_{BL} ( \mu, \tilde \mu ) \right) .
\end{multline*}
\end{lemma}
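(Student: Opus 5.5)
We split the difference into a part coming from the change of input signal and a part coming from the change of initial law. Insert the intermediate term $\int g(\varphi^{\tilde a}_{s,t}(v),\psi_{t-s}(x)) H^v_{\tilde a}(s,t)\,\mu(\d v,\d x)$. The difference is then bounded by $I_1 + I_2$, where
\[
I_1 = \int \bigl| g(\varphi^a_{s,t}(v),\psi_{t-s}(x))H^v_a(s,t) - g(\varphi^{\tilde a}_{s,t}(v),\psi_{t-s}(x))H^v_{\tilde a}(s,t)\bigr| \,\mu(\d v,\d x)
\]
and $I_2$ is the integral of $h(v,x):=g(\varphi^{\tilde a}_{s,t}(v),\psi_{t-s}(x))H^v_{\tilde a}(s,t)$ against $\mu-\tilde\mu$.

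\textbf{The term $I_1$.} We first compare the two flows. By Gronwall and the Lipschitz continuity of $b$,
\[
|\varphi^a_{s,t}(v)-\varphi^{\tilde a}_{s,t}(v)| \le e^{\|b'\|_\infty (t-s)}\int_s^t|a_u-\tilde a_u|\,\d u .
\]
Since $g\in Lip_1$ and $0\le H\le 1$, the contribution of changing the flow inside $g$ is at most $e^{\|b'\|_\infty T}\int_s^t|a_u-\tilde a_u|\,\d u$.

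For the change in $H$, we use $|e^{-A}-e^{-B}|\le |A-B|$ for $A,B\ge 0$ together with the Lipschitz continuity of $f$. This gives
\[
|H^v_a(s,t)-H^v_{\tilde a}(s,t)| \le \|f'\|_\infty\, T\, e^{\|b'\|_\infty T}\int_s^t|a_u-\tilde a_u|\,\d u .
\]
Multiplied by $\|g\|_\infty$, this produces the factor $(1+\|g\|_\infty)$ in the statement. Both estimates are uniform in $(v,x)$, so integrating against $\mu$ costs nothing.

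\textbf{The term $I_2$.} Here we use duality. We check that $h$ is bounded by $\|g\|_\infty$ and Lipschitz on $\R\times[0,1]$ with a constant $L_T$ times $(1+\|g\|_\infty)$.

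In $x$: $\psi_{t-s}$ is a contraction, since $|\psi_r(x)-\psi_r(y)|=e^{-r/\tau}|x-y|$.

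In $v$: apply Gronwall again to get $|\varphi^{\tilde a}_{s,t}(v)-\varphi^{\tilde a}_{s,t}(w)|\le e^{\|b'\|_\infty T}|v-w|$. Then
\[
|H^v_{\tilde a}(s,t)-H^w_{\tilde a}(s,t)| \le \|f'\|_\infty\, T\, e^{\|b'\|_\infty T}|v-w| .
\]
The product rule then gives the Lipschitz bound $L_T(1+\|g\|_\infty)$ for $h$.

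Rescale $h$ by $\max(L_T(1+\|g\|_\infty),\|g\|_\infty,1)\le C(1+\|g\|_\infty)$. The rescaled function is an admissible test function in the definition \eqref{eq:distanceBL} of $d_{BL}$. Hence $I_2\le C_T(1+\|g\|_\infty)\,d_{BL}(\mu,\tilde\mu)$.

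\textbf{Where the work lies.} There is no real obstacle. The only point needing care is the bookkeeping of the $\|g\|_\infty$ factor so that the final constant has the stated form $C_T(1+\|g\|_\infty)$. We must also note that the arguments use only the Lipschitz continuity of $b$ and $f$ from Assumption~\ref{ass:1}, so that all constants depend only on $T$, $\|b'\|_\infty$ and $\|f'\|_\infty$.
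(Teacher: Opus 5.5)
Your proof is correct: the intermediate term, Gronwall for $\varphi^a_{s,t}-\varphi^{\tilde a}_{s,t}$, the bound $|e^{-A}-e^{-B}|\le|A-B|$ (valid because $f\ge 0$) for the survival factor $H$, and $d_{BL}$-duality for the change of initial law together give the claim with constants depending only on $T$, $\|b'\|_\infty$ and $\|f'\|_\infty$. The paper states this lemma without proof and refers to Lemma 3.1 of \cite{MNA}; your argument is the natural way to carry that out, so there is no substantive difference in approach.
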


Conditioning on the first jump time of the process, we obtain furthermore the following result that will be often used in the sequel. 

\begin{lemma}\label{lem:geng}
Let $g : \R \times [0, 1] \to \R$ be measurable and bounded. 
For all $ s \le t, $ we have that 
\begin{multline}\label{eq:geng} 
\E \left[ g  (V_{s, t}^{ a, \mu } , X_{s, t }^{a, \mu } )\right]  = \int_{ \R \times [0, 1] }  g ( \varphi_{s, t }^a (v), \psi_{ t-s } (x) ) H_a^v ( s , t )\mu ( \d v, \d x)   \\
+ \int_{\R \times  [ 0, 1]} \int_s^t K_a^v (s, u)  \E \left[ g   (V_{u , t}^{ a, \delta_{(0, (1- U) \psi_{ u-s} ( x)) } }  , X_{u, t }^{a, \delta_{(0, (1- U) \psi_{u-s} ( x))} } )\right]  \d u  \mu ( \d v,  \d x ) . 
\end{multline}
\end{lemma}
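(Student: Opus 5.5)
We decompose according to whether the first jump of $(V^{a,\mu}_{s,r},X^{a,\mu}_{s,r})_{r\ge s}$ occurs before or after time $t$. Set $\tau_1 := \inf\{ r > s : \Delta Z_r \neq 0\}$, where $Z$ counts the atoms of $N$ lying below $f(V^{a,\mu}_{s,r-})$. Then
\[
\E\big[g(V^{a,\mu}_{s,t},X^{a,\mu}_{s,t})\big]
= \E\big[g(V^{a,\mu}_{s,t},X^{a,\mu}_{s,t})\indic{\tau_1>t}\big]
+ \E\big[g(V^{a,\mu}_{s,t},X^{a,\mu}_{s,t})\indic{\tau_1\le t}\big].
\]

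\textbf{No jump before $t$.} We first condition on the initial value $(V_{s,s},X_{s,s})=(v,x)$, which is independent of $N$ restricted to $(s,\infty)\times\R_+$. On the event $\{\tau_1>t\}$ the process follows the deterministic flows, so $V_{s,r}=\varphi^a_{s,r}(v)$ and $X_{s,r}=\psi_{r-s}(x)$ for $r\le t$. Hence $\{\tau_1>t\}$ is the event that $N$ puts no atom in the region $\{(r,z): s<r\le t,\ z\le f(\varphi^a_{s,r}(v))\}$. This region has Lebesgue measure $\int_s^t f(\varphi^a_{s,u}(v))\,\d u$, so the event has probability $H^v_a(s,t)$. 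Integrating over $\mu$ gives the first term of \eqref{eq:geng}.

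\textbf{Jump before $t$.} Conditionally on $(v,x)$, the time $\tau_1$ has density $K^v_a(s,u)$ on $(s,\infty)$, by the same Poisson computation (differentiate $-H^v_a(s,u)$ in $u$). At time $\tau_1=u$ the state is reset: $V_{s,u}=0$ and $X_{s,u}=(1-U)X_{s,u-}=(1-U)\psi_{u-s}(x)$.

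We then apply the strong Markov property at the stopping time $\tau_1$ for the Poisson random measure. The shifted measure $N(\cdot+\tau_1)$ is again a Poisson random measure with intensity $\d t\,\d z$, and it is independent of $\mathcal F_{\tau_1}$. Pathwise uniqueness of \eqref{eq:nonhom} (the coefficients are Lipschitz and the jumps are driven by a bounded-intensity thinning) identifies the solution after $\tau_1$ with $(V^{a,\delta_{(0,(1-U)\psi_{u-s}(x))}}_{u,r},X^{a,\delta_{(0,(1-U)\psi_{u-s}(x))}}_{u,r})_{r\ge u}$, evaluated at $u=\tau_1$. This process is driven by the shifted noise, and its time-inhomogeneous input $a_r$ is taken in absolute time, which explains the indexing $(u,t)$.

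Consequently,
\[
\E\big[g(\cdot)\indic{\tau_1\le t}\,\big|\,\mathcal F_{\tau_1}\big]
=\indic{\tau_1\le t}\,F(\tau_1,x),
\qquad
F(u,x):=\E\big[g(V^{a,\delta_{(0,(1-U)\psi_{u-s}(x))}}_{u,t},X^{a,\delta_{(0,(1-U)\psi_{u-s}(x))}}_{u,t})\big].
\]
Integrating $F(u,x)$ against the density $K^v_a(s,u)$ over $u\in[s,t]$, and then integrating against $\mu(\d v,\d x)$, produces the second term of \eqref{eq:geng}. Fubini's theorem applies because $g$ is bounded.

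\textbf{Main obstacle.} The delicate step is the rigorous strong Markov/regeneration argument in this time-inhomogeneous setting. Two points need care. First, $F(u,x)$ must be jointly measurable in $(u,x)$; this follows from continuity of the flows and measurable dependence of the solution on its initial data and starting time. Second, the post-jump process must be identified, via pathwise uniqueness, with a solution started at time $u$. The clean way to handle the second point is to treat the time-space process $(r,V_{s,r},X_{s,r})$ as a homogeneous piecewise-deterministic Markov process and use its standard strong Markov property at jump times.
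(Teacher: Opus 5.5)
Your proposal is correct and takes the same route as the paper. The paper simply invokes the strong Markov property at the first jump time after $s$ and defers details to \cite{cormier2023stability}; your split into the no-jump event, of probability $H_a^v(s,t)$, and the jump event, with first-jump density $K_a^v(s,u)$ and reset state $(0,(1-U)\psi_{u-s}(x))$, is exactly the computation the paper omits.
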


\begin{proof}
Equation~\eqref{eq:geng}  follows from applying the strong Markov property at the first jump time of the process after time $s.$ Details are omitted (see the proof of Lemma 3.2 in \cite{cormier2023stability}). 
\end{proof}

To study the non-linearity appearing in our equation~\eqref{eq:VXNL}, we introduce  
\begin{equation}
    R_{s,t}^a (x)  = \E ( X_{s, t }^{ a , \delta_{(0, x) } } f( V_{s, t }^{a, \delta_{(0, x)}}) ) .  
\end{equation}
Applying~\eqref{eq:geng} with $ \mu = \delta_{(v,x) } $ and $ g (v, x) = x f(v), $ we have the following representation. 
\begin{lemma}\label{lem:33}
\begin{equation}\label{eq:rconvol}
R_{s, t}^a (x)  = K^{0 }_a (s, t) \psi_{ t - s } ( x) + \int_s^t R_{u, t }^a ( (1- U) \psi_{u- s } ( x))   K^{0 }_a ( s, u ) \d u . 
\end{equation}
The latter equation has a unique solution of the form 
\begin{equation}\label{eq:formr}
R_{s, t }^a ( x) = R_{s,t }^{a, 1} + R_{s,t}^{a, 2} x  ,
\end{equation}
where $ R_{s,t }^{a, 1} $ and $ R_{s,t}^{a, 2}$ are given explicitly in~\eqref{eq:R12} below.    \end{lemma}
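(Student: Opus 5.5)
We apply Lemma~\ref{lem:geng} with $\mu=\delta_{(0,x)}$ and $g(v,y)=y\,f(v)$. This $g$ is bounded and measurable on $\R\times[0,1]$ because $f$ is bounded (Assumption~\ref{ass:1}), so the lemma applies.

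\textbf{Step 1: the representation.} The first term of \eqref{eq:geng} becomes
\[
\psi_{t-s}(x)\, f(\varphi^a_{s,t}(0))\,H^0_a(s,t)=\psi_{t-s}(x)\,K^0_a(s,t),
\]
by the definitions of $H^0_a$ and $K^0_a$. In the second term, the inner expectation is
\[
\E\big[X^{a,\delta_{(0,y)}}_{u,t} f(V^{a,\delta_{(0,y)}}_{u,t})\big]=R^a_{u,t}(y), \qquad y=(1-U)\psi_{u-s}(x).
\]
Together these give exactly \eqref{eq:rconvol}.

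\textbf{Step 2: uniqueness.} Fix $t$ and view \eqref{eq:rconvol} as a backward Volterra equation in $s\in[0,t]$, posed in the class of bounded measurable functions $(s,x)\mapsto R_{s,t}(x)$ on $[0,t]\times[0,1]$. Since $(1-U)\psi_{u-s}(x)\in[0,1]$, the equation maps this class into itself. Let $D$ be the difference of two solutions and set $\delta(s)=\sup_x|D_{s,t}(x)|$. Using $K^0_a\le\|f\|_\infty$, we get
\[
\delta(s)\le \|f\|_\infty\int_s^t \delta(u)\,\d u,
\]
so Gronwall's lemma gives $\delta\equiv0$.

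\textbf{Step 3: existence of the affine form.} It therefore suffices to produce one solution of the form \eqref{eq:formr}. Insert the ansatz $R_{s,t}(x)=R^{1}_{s,t}+R^{2}_{s,t}x$ and use
\[
(1-U)\psi_{u-s}(x)=(1-U)\big(1-e^{-(u-s)/\tau}\big)+(1-U)e^{-(u-s)/\tau}x .
\]
Matching the coefficients of $x$ and of $1$ gives two linear Volterra equations:
\begin{align*}
R^{2}_{s,t}&=K^0_a(s,t)e^{-(t-s)/\tau}+(1-U)\int_s^t R^{2}_{u,t}\,e^{-(u-s)/\tau}K^0_a(s,u)\,\d u,\\
R^{1}_{s,t}&=K^0_a(s,t)\big(1-e^{-(t-s)/\tau}\big)+\int_s^t\Big[R^{1}_{u,t}+(1-U)\big(1-e^{-(u-s)/\tau}\big)R^{2}_{u,t}\Big]K^0_a(s,u)\,\d u .
\end{align*}
Both kernels are bounded by $\|f\|_\infty$. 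Hence the first equation has a unique bounded solution $R^{2}$, given by a convergent Neumann series (equivalently, via the resolvent kernel). Plugging $R^2$ into the second equation gives a Volterra equation for $R^1$ with a known bounded source term, solved the same way. These resolvent formulas are the explicit expressions \eqref{eq:R12}.

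As a sanity check, affinity is also clear probabilistically. The $V$-component does not depend on the $X$-component, and given the jump times $X_t$ is an affine function of $x$ with path-dependent coefficients. So $\E[X f(V)]$ is affine in $x$, and Step 2 identifies it with the constructed solution.

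The only mildly delicate point is Step 2: we must work in the right function class (bounded and measurable in $(s,x)$ jointly) and check that the argument $(1-U)\psi_{u-s}(x)$ stays in $[0,1]$, so that the Gronwall argument closes. Steps 1 and 3 are routine bookkeeping once \eqref{eq:geng} is available.
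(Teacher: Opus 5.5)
Your proposal is correct and follows the paper's proof. You obtain \eqref{eq:rconvol} from Lemma~\ref{lem:geng} with $g(v,x)=xf(v)$, insert the affine ansatz using the splittings of $K^0_a(s,t)\psi_{t-s}(x)$ and $(1-U)\psi_{u-s}(x)$ into $A^{a,k}$ and $B^k$, and solve the triangular Volterra system \eqref{eq:R12} by Neumann series, first for $R^{a,2}$ and then for $R^{a,1}$; beyond that you only spell out the uniqueness step the paper calls ``classical'' (via Gronwall in the bounded measurable class) and correctly use $K^0_a$ where the paper's \eqref{eq:R12} writes $K^v_a$.
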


\begin{proof}
Classical arguments imply that the integral equation~\eqref{eq:rconvol} possesses a unique solution. Using the explicit form \eqref{eq:psi_t}, we see that 
$$ K^{0 }_a (s, t) \psi_{ t - s } ( x) = A_{s, t }^{a,1} + A_{s, t }^{a,2} x ,$$
where $ A_{s, t }^{a, 1} = K^{0 }_a (s, t) ( 1 - e^{ - (t-s) /\tau}) $ and $ A_{s, t }^{a,2} =K^{0 }_a (s, t) e^{ - (t-s)/ \tau }  . $ The same decomposition holds for $ (1 - U) \psi_{s-u} ( x) $ such that 
$$ (1- U) \psi_{u- s } ( x) = B^1_{u- s} + B^2_{u- s} x $$
with 
$$ B^1_{u- s} = ( 1 - U) (1 - e^{ - ( u-s)/\tau} ) ,\;  B^2_{u-s} = ( 1 - U) e^{ - ( u-s)/\tau} .$$
We look therefore for solutions of the form 
$$ R_{s,t }^a (x) = R_{s, t}^{a, 1} + R_{s, t }^{a, 2} x ,$$
where $ R^{a, 1} $ and $ R^{a, 2 }$ solve 
\begin{eqnarray}\label{eq:R12}
R^{a, 1 }_{s, t }& =& A_{s, t}^{a,1} + \int_s^t K^v_a ( s, u ) R^{a, 1}_{u, t} \d u + \int_s^t K^v_a (s, u ) R^{a, 2 }_{u, t} B^1_{u-s} \d u \nonumber \\
R^{a, 2 }_{s, t }&=& A^{a,2}_{s, t } + \int_s^t K^v_a (s, u ) R^{a, 2 }_{u, t } B^2 _{ u-s} \d u . 
\end{eqnarray}
The second equation is a standard non-homogenous Volterra integral equation and can be solved via a standard Neumann series. Once $R^{a, 2} $ is known, the first equation in $ R^{a, 1} $ is also a standard Volterra equation and has a unique solution. 
\end{proof}

We now study \begin{equation}
    R_{s,t}^a (v, x)  = \E ( X_{s, t }^{ a , \delta_{(v, x) } } f( V_{s, t }^{a, \delta_{(v, x)}}) ) .  
\end{equation}

\begin{lemma}
We have that 
\begin{equation}\label{eq:Ravx}
R_{s,t}^a (v, x) = K^v_a (s, t) \psi_{ t-s}(x) + \int_s^t K^v_a (s, u) R_{u, t }^a ( (1- U) \psi_{ u-s } (x) ) \d u . 
\end{equation}
As a consequence, 
\begin{equation}\label{eq:expl} R_{s, t }^a (v, x) = S^{a, 1 }_{s, t} (v) + S_{s, t}^{a,2} (v) x , 
\end{equation}
where 
$$ S^{a, 1 }_{s, t} (v) = K_a^v ( s, t) \psi_{t-s} (0) + \int_s^t K^v_a (s, u ) R^{a, 1}_{u,t} \d u + (1 - U) \int_s^t K_a^v (s, u ) \psi_{u-s} ( 0)  R^{a, 2 }_{u, t } \d u $$
and 
$$ S^{a, 2 }_{s, t } ( v) = K^v_a (s, t) e^{ (t-s)/\tau} + (1- U) \int_s^t K^v_a (s, u ) e^{ - (u-s)/\tau} R^{a, 2 }_{u, t} \d u . $$

\end{lemma}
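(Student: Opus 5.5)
Apply Lemma~\ref{lem:geng} with initial law $\mu=\delta_{(v,x)}$ and the bounded test function $g(v',x')=x'f(v')$; this $g$ is bounded because $f$ is bounded and $x'\in[0,1]$. The first term on the right-hand side of~\eqref{eq:geng} becomes $\psi_{t-s}(x)\,f(\varphi^a_{s,t}(v))\,H^v_a(s,t)$. By definition,
\[
f(\varphi^a_{s,t}(v))\exp\Bigl(-\int_s^t f(\varphi^a_{s,u}(v))\,\d u\Bigr)=K^v_a(s,t),
\]
so this term equals $K^v_a(s,t)\psi_{t-s}(x)$. In the second term of~\eqref{eq:geng} the process restarts at the first jump time $u$ from the point $(0,(1-U)\psi_{u-s}(x))$. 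The inner expectation is therefore, by the definition of $R^a_{u,t}$ (the expectation started from potential $0$), exactly $R^a_{u,t}((1-U)\psi_{u-s}(x))$. Integrating against $K^v_a(s,u)\,\d u$ gives~\eqref{eq:Ravx}. One detail to check: the first term of~\eqref{eq:geng} is really the contribution of the event "no jump on $[s,t]$". On that event the state at time $t$ is $(\varphi^a_{s,t}(v),\psi_{t-s}(x))$, and the event has probability $H^v_a(s,t)$, so the identification above is correct.

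For the affine form~\eqref{eq:expl}, substitute the representation~\eqref{eq:formr} from Lemma~\ref{lem:33}, $R^a_{u,t}(y)=R^{a,1}_{u,t}+R^{a,2}_{u,t}\,y$, with $y=(1-U)\psi_{u-s}(x)$. Then use the explicit formula $\psi_r(x)=1+(x-1)e^{-r/\tau}$, which can be split as
\[
\psi_r(x)=\psi_r(0)+e^{-r/\tau}x .
\]
The constant-in-$x$ parts are $K^v_a(s,t)\psi_{t-s}(0)$, $\int_s^t K^v_a(s,u)R^{a,1}_{u,t}\,\d u$ and $(1-U)\int_s^t K^v_a(s,u)\psi_{u-s}(0)R^{a,2}_{u,t}\,\d u$; together they give $S^{a,1}_{s,t}(v)$. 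The coefficients of $x$ are $K^v_a(s,t)e^{-(t-s)/\tau}$ and $(1-U)\int_s^t K^v_a(s,u)e^{-(u-s)/\tau}R^{a,2}_{u,t}\,\d u$, which give $S^{a,2}_{s,t}(v)$. The exponent in the first term of $S^{a,2}$ in the statement should read $e^{-(t-s)/\tau}$; the positive sign there is presumably a typo.

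The only point requiring care is the justification of the strong Markov step, that is, the identification of the post-jump process with the process started at $(0,(1-U)\psi_{u-s}(x))$. This is already contained in Lemma~\ref{lem:geng}, so I can invoke it directly. I also need the integrals to be finite, which holds since $K^v_a\ge 0$ integrates to at most $1$ and $R^{a,1},R^{a,2}$ are bounded by $\|f\|_\infty$. The rest is linear bookkeeping.
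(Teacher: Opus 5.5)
Your proof is correct and matches the paper's intended argument; the paper gives no separate proof, only the remark that one applies Lemma~\ref{lem:geng} with $\mu=\delta_{(v,x)}$ and $g(v,x)=xf(v)$, then substitutes the affine form of $R^a_{u,t}$ from Lemma~\ref{lem:33} and splits $\psi_r(x)=\psi_r(0)+e^{-r/\tau}x$. You are also right that the first term of $S^{a,2}_{s,t}(v)$ should read $K^v_a(s,t)\,e^{-(t-s)/\tau}$; the positive exponent in the statement is a typo.
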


The following auxiliary results will be useful in the sequel. 
\begin{lemma}
There exists a constant $C_T$ such that for all $ a, \tilde a \in C( \R_+ ; \R ) , $ for all $ 0 \le s \le t \le T, $ and $ v \in \R ,$
$$ | K^v_a (s,t ) - K^v_{\tilde a } (s, t) | \le C_T \int_s^t | a_u - \tilde a_u| \d u .$$ 
In particular, 
$$ | A^{ a, k}_{s, t } - A^{\tilde a , k }_{s, t } | \le C_T \int_s^t | a_u - \tilde a_u| \d u , $$
for $ k = 1, 2. $
\end{lemma}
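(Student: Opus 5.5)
The plan is to work directly from the definitions. Write
\[ K^v_a(s,t) = f(\varphi^a_{s,t}(v))\, \exp\Big(-\int_s^t f(\varphi^a_{s,u}(v))\,\d u\Big) = F(\varphi^a_{s,t}(v))\, E^a(s,t), \]
where $E^a(s,t)=\exp(-\int_s^t f(\varphi^a_{s,u}(v))\d u)$ and $F=f$. Everything then reduces to a stability estimate for the flow with respect to the input. For fixed $v$ and $s$, set $\delta_r := \varphi^a_{s,r}(v) - \varphi^{\tilde a}_{s,r}(v)$. Then $\delta_s = 0$ and
\[ |\delta_r| \le \|b'\|_\infty \int_s^r |\delta_u|\,\d u + \int_s^r |a_u - \tilde a_u|\,\d u . \]
Grönwall's lemma gives $|\delta_r| \le e^{\|b'\|_\infty T}\int_s^r |a_u-\tilde a_u|\d u \le e^{\|b'\|_\infty T}\int_s^t |a_u-\tilde a_u|\d u$ for all $s\le r\le t\le T$. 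The bound is uniform in $v$ because $b$ is globally Lipschitz by Assumption~\ref{ass:1}.

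Next I split the difference of the kernels as
\[ |K^v_a - K^v_{\tilde a}| \le |f(\varphi^a_{s,t}(v)) - f(\varphi^{\tilde a}_{s,t}(v))|\, E^a(s,t) + \|f\|_\infty\, |E^a(s,t) - E^{\tilde a}(s,t)| . \]
For the first term I use $E^a\le 1$ and the Lipschitz bound $\|f'\|_\infty|\delta_t|$. For the second term I use $|e^{-x}-e^{-y}|\le |x-y|$ for $x,y\ge 0$ (valid since $f\ge 0$), which bounds it by
\[ \|f\|_\infty\|f'\|_\infty\int_s^t|\delta_u|\d u \le \|f\|_\infty\|f'\|_\infty\, T\, e^{\|b'\|_\infty T}\int_s^t|a_u-\tilde a_u|\d u . \]
Summing the two pieces gives the claim with $C_T = \|f'\|_\infty e^{\|b'\|_\infty T}(1+T\|f\|_\infty)$.

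For the consequence, $A^{a,1}_{s,t} = K^0_a(s,t)(1-e^{-(t-s)/\tau})$ and $A^{a,2}_{s,t} = K^0_a(s,t)e^{-(t-s)/\tau}$. In both cases the multiplying factors lie in $[0,1]$ and do not depend on $a$, so the bound follows immediately from the first part with $v=0$.

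There is no real obstacle here. The only point needing care is that the constants stay uniform in $v$ and in $s\le t\le T$, and this is guaranteed by the global bounds on $b'$, $f$ and $f'$.
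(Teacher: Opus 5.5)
Your proof is correct and takes the same route as the paper, which only remarks that the bound ``follows directly from the definition of $K^v$, $A^{a,k}$''. Your Gr\"onwall estimate on the flow and the product splitting with $|e^{-x}-e^{-y}|\le |x-y|$ simply supply the details the paper omits, and the resulting constant is uniform in $v$ and in $0\le s\le t\le T$.
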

The proof of this lemma follows directly from the definition of $ K^v , A^{a, k}.$

\begin{lemma}
There exists a constant $C_T$ such that for all $ a, \tilde a \in C( \R_+; \R ) , $ and for all $ 0 \le s \le t \le T, $ for $ k = 1, 2, $
$$ | R^{a, k }_{s, t } - R^{ \tilde a , k}_{s, t } | \le C_T \int_s^t | a_u - \tilde a_u | \d u .$$
\end{lemma}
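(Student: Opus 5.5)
The two functions are the solutions of the coupled Volterra system \eqref{eq:R12}, with kernel $K^0_a(s,u)$ (the $K^v_a$ there should be read as $K^0_a$, since $R^a_{s,t}(x)$ starts from potential $0$). I will therefore treat $R^{a,2}$ first, since it solves a closed equation, and then $R^{a,1}$, whose equation contains $R^{a,2}$ as a source term. In both cases I subtract the equations for $a$ and $\tilde a$ and close with a Gronwall-type argument in the backward variable $s\in[0,t]$, for fixed $t\le T$.

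The first step is uniform a priori bounds. Because $f$ is bounded, $0\le K^0_a(s,u)\le \|f\|_\infty$ for every $a$. Also $0\le B^k_{u-s}\le 1$ and $|A^{a,k}_{s,t}|\le \|f\|_\infty$. The probabilistic interpretation gives $R^a_{s,t}(x)\in[0,\|f\|_\infty]$ for $x\in[0,1]$, so $|R^{a,k}_{s,t}|\le 2\|f\|_\infty$ (use $x=0$ and $x=1$). Alternatively, one can bound the Neumann series directly: the $n$-th iterate is at most $\|f\|_\infty^{n+1}(t-s)^n/n!$. Either way, $\sup_{0\le s\le t\le T}|R^{a,k}_{s,t}|\le M_T$ with $M_T$ independent of $a$.

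The second step handles $R^{a,2}$. Set $\delta(s):=|R^{a,2}_{s,t}-R^{\tilde a,2}_{s,t}|$ and $\Delta_{s,t}:=\int_s^t|a_u-\tilde a_u|\,\d u$. Subtracting the two equations and adding and subtracting $K^0_a(s,u)R^{\tilde a,2}_{u,t}B^2_{u-s}$ gives
\begin{equation*}
\delta(s)\le |A^{a,2}_{s,t}-A^{\tilde a,2}_{s,t}| + \int_s^t |K^0_a(s,u)-K^0_{\tilde a}(s,u)|\,M_T\,\d u + \|f\|_\infty\int_s^t \delta(u)\,\d u .
\end{equation*}
The previous lemma bounds the first term by $C_T\Delta_{s,t}$. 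It bounds the second by $C_T M_T\int_s^t\Delta_{s,u}\,\d u\le C_T M_T T\,\Delta_{s,t}$, since $\Delta_{s,u}\le\Delta_{s,t}$ for $u\le t$. For $u\ge s$ we have $\Delta_{u,t}\le\Delta_{s,t}$, so the right-hand side has the form $c\,\Delta_{s,t}+L\int_s^t\delta(u)\,\d u$ with $\Delta_{s,t}$ nonincreasing in $s$. The backward Gronwall lemma on $[s,t]$ then gives $\delta(s)\le c\,e^{LT}\Delta_{s,t}$.

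The third step handles $R^{a,1}$ in the same way. The only difference is an extra difference term $\int_s^t K^0_{\tilde a}(s,u)\,|R^{a,2}_{u,t}-R^{\tilde a,2}_{u,t}|\,B^1_{u-s}\,\d u$, which the second step bounds by $\|f\|_\infty T\,C_T\Delta_{s,t}$. A second application of Gronwall completes the proof.

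The main obstacle is the bookkeeping of the two-time structure. The kernel $K^0_a(s,u)$ depends on $a$ through the flow started at time $s$, whereas the unknowns are evaluated at $(u,t)$. One must check that every forcing term is controlled by $\Delta_{s,t}$, which is monotone in $s$, so that Gronwall can be applied in $s$ with $t$ frozen. Once this is done, the rest is routine, with constants depending only on $T$, $\|f\|_\infty$ and the constant $C_T$ of the previous lemma.
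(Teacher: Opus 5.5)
Your proposal is correct and matches the paper's proof. In both, you subtract the two Volterra equations in \eqref{eq:R12}, split the kernel term into a $K^0_a-K^0_{\tilde a}$ part and a part containing the difference of the unknowns, apply Gronwall to $R^{a,2}$ first, and then apply it to $R^{a,1}$ with the $R^{a,2}$-difference as an extra source term. You also make explicit three points the paper leaves implicit: the $a$-uniform bound on $R^{a,k}$, the monotonicity of $\int_s^t|a_u-\tilde a_u|\,\d u$ in $s$ that justifies the backward Gronwall step, and the fact that $K^v_a$ in \eqref{eq:R12} should be read as $K^0_a$, which is correct.
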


\begin{proof}
Let us first consider the case $ k=2.$ We have that 
\begin{multline*}
R^{a, 2 }_{s, t } - R^{ \tilde a , 2}_{s, t } = A^{a, 2 }_{s, t } - A^{ \tilde a , 2}_{s, t } \\
 + \int_s^t \left[ K^v_a (s, u ) - K^v_{\tilde a} (s, u )   \right] R^{ a, 2}_{u, t } B_{u-s}^2 \d u 
 + \int_s^t K^v_{\tilde a} (s, u )\left[ R^{a, 2 }_{u, t } - R^{ \tilde a , 2}_{u, t } \right] B^2_{u-s} \d u , 
\end{multline*}
such that the assertion follows from Gronwall's lemma. Once the assertion is proven for $ R^{a, 2}, $ using~\eqref{eq:R12} and Gronwall's lemma once more, it also follows for $ R^{a, 1}. $
\end{proof}

Using~\eqref{eq:Ravx}, we deduce similarly the following result. 
\begin{lemma}\label{lem:36}
There exists a constant $C_T$ such that for all $ a, \tilde a \in C( \R_+ ; \R ) , $ and for all $ 0 \le s \le t \le T, $
$$ |R_{s,t}^a (v, x)- R_{s,t}^{\tilde a}  (v, x)| \le C_T \int_s^t | a_u - \tilde a_u | \d u . $$ 
\end{lemma}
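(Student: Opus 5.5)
We compare the representation \eqref{eq:Ravx} for the two inputs $a$ and $\tilde a$ term by term. Writing $R^a_{s,t}(v,x)$ and $R^{\tilde a}_{s,t}(v,x)$ via \eqref{eq:Ravx}, their difference splits into three terms:
\begin{multline*}
R^a_{s,t}(v,x)-R^{\tilde a}_{s,t}(v,x) = \bigl(K^v_a(s,t)-K^v_{\tilde a}(s,t)\bigr)\psi_{t-s}(x) \\
+ \int_s^t \bigl(K^v_a(s,u)-K^v_{\tilde a}(s,u)\bigr) R^a_{u,t}\bigl((1-U)\psi_{u-s}(x)\bigr)\,\d u \\
+ \int_s^t K^v_{\tilde a}(s,u)\Bigl(R^a_{u,t}\bigl((1-U)\psi_{u-s}(x)\bigr)-R^{\tilde a}_{u,t}\bigl((1-U)\psi_{u-s}(x)\bigr)\Bigr)\,\d u .
\end{multline*}
The point is that, unlike in the previous lemma, no Gronwall argument is needed here. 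The unknown $R^a_{u,t}(\cdot)$ inside the integral is the function started at potential $0$, which has already been controlled.

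For the first term we use $|\psi_{t-s}(x)|\le 1$ for $x\in[0,1]$ together with the bound $|K^v_a(s,t)-K^v_{\tilde a}(s,t)|\le C_T\int_s^t|a_u-\tilde a_u|\,\d u$ from the lemma on $K$. For the second term, note that $0\le R^a_{u,t}(y)\le \|f\|_\infty$, since $X\in[0,1]$ and $f\ge0$ is bounded. Applying the same $K$-lemma with $t$ replaced by $u\le t$ gives a bound $T\|f\|_\infty C_T\int_s^t|a_r-\tilde a_r|\,\d r$, because $\int_s^u\le\int_s^t$.

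For the third term we use the decomposition \eqref{eq:formr}: $R^a_{u,t}(y)=R^{a,1}_{u,t}+R^{a,2}_{u,t}y$ with $y=(1-U)\psi_{u-s}(x)\in[0,1]$. The preceding lemma then gives
\[
|R^a_{u,t}(y)-R^{\tilde a}_{u,t}(y)|\le 2C_T\int_u^t|a_r-\tilde a_r|\,\d r\le 2C_T\int_s^t|a_r-\tilde a_r|\,\d r .
\]
Since $0\le K^v_{\tilde a}(s,u)\le \|f\|_\infty$ and the $u$-integral is over an interval of length at most $T$, this term is at most $2T\|f\|_\infty C_T\int_s^t|a_r-\tilde a_r|\,\d r$.

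Summing the three bounds gives the claim with a new constant $C_T$, uniform in $v\in\R$ and $x\in[0,1]$. The only delicate point is the uniformity in $v$ of the $K$-estimate. This rests on $f'$ being bounded and on $\varphi^a_{s,u}(v)-\varphi^{\tilde a}_{s,u}(v)$ being controlled by $e^{\|b'\|_\infty T}\int_s^u|a-\tilde a|$ uniformly in $v$ (Gronwall with $b$ Lipschitz). I take that lemma as given, so no real obstacle remains.
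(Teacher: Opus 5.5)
Your proposal is correct and follows the paper's route: the paper only remarks that the bound follows from \eqref{eq:Ravx} together with the preceding estimates on $K^v_a$ and on $R^{a,1},R^{a,2}$, and your three-term splitting fills in exactly that argument. You are also right that no Gronwall step is needed, because the function of $(1-U)\psi_{u-s}(x)$ inside the integral in \eqref{eq:Ravx} is the potential-zero function $R^a_{u,t}(\cdot)$, which the preceding lemma already controls.
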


Finally, we notice that for any fixed $a \in C( \R_+; \R) , $ due to the explicit structure~\eqref{eq:expl}, 
$$ \R \times [0, 1 ] \ni (v, x) \mapsto R_{s,t}^a (v, x) $$
is bounded and Lipschitz continuous, with 
$$ \sup_{ v, x} | R_{s, t }^a (v, x) | + \sup_{ 0 \le s \le t \le T } \| R_{s, t }^a \|_{ Lip } \le C_T $$
for a constant $C_T$ that does not depend on $a. $

Let now $ R_a^{\mu  } ( s, t ) = \int_{ \R \times [0, 1 ] } R^a_{s, t} (v, x)  \mu ( \d v, \d x). $ As a consequence of the above arguments we  deduce the following result. 
\begin{lemma}\label{lem:38}
There exists a constant $C_T$ such that for all $ a, \tilde a \in C( \R_+ ; \R ) , $ and for all $ 0 \le s \le t \le T, $
$$  | R_a^{\mu  } ( s, t )- R_{\tilde a}^{ \tilde \mu  } ( s, t )| \le C_T \int_s^t | a_r - \tilde a_r|\d r + C_T d_{BL} ( \mu, \tilde \mu ) . $$
\end{lemma}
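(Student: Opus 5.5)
The plan is to split the difference with the triangle inequality,
\[
|R_a^{\mu}(s,t)-R_{\tilde a}^{\tilde\mu}(s,t)| \le \Big|\int \big(R^a_{s,t}-R^{\tilde a}_{s,t}\big)(v,x)\,\mu(\d v,\d x)\Big| + \Big|\int R^{\tilde a}_{s,t}(v,x)\,(\mu-\tilde\mu)(\d v,\d x)\Big| ,
\]
and to bound the two terms separately.

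\emph{First term.} By Lemma~\ref{lem:36}, $|R^a_{s,t}(v,x)-R^{\tilde a}_{s,t}(v,x)| \le C_T\int_s^t|a_u-\tilde a_u|\,\d u$. This holds uniformly in $(v,x)$, because the constant $C_T$ there does not depend on $v$; the bounds on $K^v_a$ come from the global bounds on $f,f',b'$. Integrating against the probability measure $\mu$ gives the bound $C_T\int_s^t|a_r-\tilde a_r|\,\d r$. If needed, I would recheck that the Lipschitz-in-$a$ estimate for $K^v_a$ is uniform in $v$. It is: $|\varphi^a_{s,u}(v)-\varphi^{\tilde a}_{s,u}(v)|\le e^{\|b'\|_\infty T}\int_s^u|a_r-\tilde a_r|\,\d r$ by Gronwall, independently of $v$. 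Moreover, $f$ is bounded and Lipschitz, and $x\mapsto e^{-x}$ is $1$-Lipschitz on $\R_+$.

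\emph{Second term.} Set $L_T:=\sup_{0\le s\le t\le T}\big(\sup_{v,x}|R^{\tilde a}_{s,t}(v,x)|+\|R^{\tilde a}_{s,t}\|_{Lip}\big)$. By the observation preceding the lemma, $L_T\le C_T$ uniformly in $\tilde a$. Then $g:=R^{\tilde a}_{s,t}/\max(L_T,1)$ belongs to $Lip_1$ and satisfies $\|g\|_\infty\le1$. By the definition \eqref{eq:distanceBL} of $d_{BL}$, the second term is at most $\max(L_T,1)\,d_{BL}(\mu,\tilde\mu)\le C_T\,d_{BL}(\mu,\tilde\mu)$. Adding the two bounds gives the claim.

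\emph{Main obstacle.} The only step needing care is the uniform-in-$\tilde a$ Lipschitz bound on $(v,x)\mapsto R^{\tilde a}_{s,t}(v,x)$. I would derive it from the representation \eqref{eq:expl}, $R=S^{1}(v)+S^{2}(v)x$. Here $R^{a,k}_{u,t}$ are bounded by Volterra/Gronwall estimates, since $K\le\|f\|_\infty$. Next, $v\mapsto K^v_a(s,u)$ is Lipschitz with constant $C_T$: the flow satisfies $|\varphi^a_{s,u}(v)-\varphi^a_{s,u}(\tilde v)|\le e^{\|b'\|_\infty T}|v-\tilde v|$, and $f$ is bounded and Lipschitz. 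Finally, $S^2$ is bounded, which gives the Lipschitz property in $x$. All constants depend only on $T,\|f\|_\infty,\|f'\|_\infty,\|b'\|_\infty,U,\tau$, and not on $\tilde a$.
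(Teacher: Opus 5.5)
Your proposal is correct and follows essentially the paper's own argument. The paper likewise integrates the uniform-in-$(v,x)$ bound of Lemma~\ref{lem:36} against $\mu$, and then controls $\int R^{\tilde a}_{s,t}\,\d(\mu-\tilde\mu)$ by $d_{BL}(\mu,\tilde\mu)$, using that $R^{\tilde a}_{s,t}$ is bounded and Lipschitz with constants $C_T$ independent of the input. Your normalization by $\max(L_T,1)$ and your check of uniformity in $v$ and $\tilde a$ only make explicit what the paper leaves implicit; you also correctly use $R^{\tilde a}$ where the paper's final display writes $R^a$.
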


\begin{proof}
We first show the inequality for $ \mu = \tilde \mu = \delta_{ (v, x) } $ for some $ x \in [0, 1 ], v \in \R. $ Then 
$$R_a^{\mu  } ( s, t )- R_{\tilde a}^{ \tilde \mu  } ( s, t ) = R_{s, t }^a (v,  x) - R_{s,t }^{\tilde a } (v, x) , $$
and the result follows from Lemma~\ref{lem:36}. Integrating the inequality with respect to $ \mu $ yields the result for arbitrary $ \mu ,$ with $ \tilde \mu = \mu .$ Finally, we have that 
$$ R_{\tilde a}^{  \mu  } ( s, t )- R_{\tilde a}^{ \tilde \mu  } ( s, t ) = \int_{ \R \times [0, 1]} R^a_{s, t} (v, x) ( \mu  -  \tilde \mu)  ( \d v, \d x) ,  $$
and the result follows from the definition of the distance $ d_{BL} $ and the fact that $ R^a_{s, t} (v, x)$ is bounded and Lipschitz continuous. 

\end{proof}

In what follows, we fix a function $g \in Lip_1 $ such that $ \|g \|_\infty \le 1, $ and we extend the above arguments to the study of  $\E \left[ g  (V_{s, t}^{ a, \mu } , X_{s, t }^{a, \mu } )\right]  .$ 

\begin{lemma}
For any $g \in Lip_1 $ such that $ \|g \|_\infty \le 1, $ the function $ x \mapsto \E \left[ g  (V_{s, t}^{ a, \delta_{(0, x)} } , X_{s, t }^{a, \delta_{(0, x)} } )\right] =: R_{s, t}^{a, g } ( x) $ is Lipschitz continuous with Lipschitz constant 
$$ \sup_{ x \neq x' }\frac{| R_{s, t}^{a, g } ( x) - R_{s, t}^{a, g } ( x')| }{|x-x'|} \le e^{ \|f\|_\infty (t-s) }. $$
If moreover $g \in C^1 ( \R \times [0, 1 ]; \R),$ then $x \mapsto R_{s, t}^{a, g } ( x)$ is also differentiable. 
\end{lemma}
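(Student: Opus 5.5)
We use a pathwise coupling. Start two copies of the non-homogeneous process \eqref{eq:nonhom} from $(0,x)$ and $(0,x')$, driven by the same Poisson random measure $N$. The potential components do not depend on the $X$-coordinate: the drift $b(v)+a_r$, the jump rate $f(v)$ and the reset to $0$ involve only $V$. Hence $V^{a,\delta_{(0,x)}}_{s,r} = V^{a,\delta_{(0,x')}}_{s,r}$ for all $r\in[s,t]$ almost surely, and the two copies share the same jump times $s<T_1<T_2<\dots$.

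Between jumps, $X$ follows the affine flow $\psi$. At jumps it is multiplied by $(1-U)$. Since $\psi_h(x)-\psi_h(x') = (x-x')e^{-h/\tau}$, an induction over the jump times gives the pathwise identity
\[
X^{a,\delta_{(0,x)}}_{s,r} - X^{a,\delta_{(0,x')}}_{s,r} = (x-x')\,(1-U)^{Z_{s,r}} e^{-(r-s)/\tau},
\]
where $Z_{s,r}$ is the common number of jumps in $[s,r]$. In particular $|X_{s,t}-X'_{s,t}|\le |x-x'|$.

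\textbf{Lipschitz bound.} Since $g\in Lip_1$ and the $V$-components coincide,
\[
|R^{a,g}_{s,t}(x)-R^{a,g}_{s,t}(x')| \le \E|X_{s,t}-X'_{s,t}| \le |x-x'| \le e^{\|f\|_\infty(t-s)}|x-x'|.
\]
This already gives the claimed constant, with room to spare. The stated constant $e^{\|f\|_\infty(t-s)}$ is what one would obtain by the alternative route instead: iterate the renewal identity of Lemma~\ref{lem:geng} with $\mu=\delta_{(0,x)}$ and run a Gronwall argument, using $\int_s^t K^0_a(s,u)\,\d u\le \|f\|_\infty (t-s)$. Either route is acceptable. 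I would present the coupling, because it is cleaner.

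\textbf{Differentiability.} Assume $g\in C^1$. By the coupling, $X_{s,t}$ is an affine function of $x$ along each path, with slope $(1-U)^{Z_{s,t}}e^{-(t-s)/\tau}$, while $V_{s,t}$ does not depend on $x$. Therefore $x\mapsto g(V_{s,t},X_{s,t}(x))$ is $C^1$ pathwise, with derivative
\[
\partial_x g(V_{s,t},X_{s,t})\,(1-U)^{Z_{s,t}}e^{-(t-s)/\tau}.
\]
This derivative is bounded by $\|\partial_x g\|_\infty\le 1$, which is uniform in the path and in $x$. Dominated convergence then allows us to differentiate under the expectation, so $R^{a,g}_{s,t}$ is differentiable with
\[
(R^{a,g}_{s,t})'(x)=\E\big[\partial_x g(V_{s,t},X_{s,t})(1-U)^{Z_{s,t}}\big]e^{-(t-s)/\tau}.
\]

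\textbf{Main obstacle.} The only delicate point is to justify rigorously that the two processes share the same $V$-path and the same jump times. This holds because the thinning indicator $\indic{z\le f(V_{s,r-})}$ depends only on $V$, and because the strong existence and uniqueness established in Proposition~\ref{prop:1} for the linear equation makes $V$ a measurable functional of $N$ alone. Once that is settled, everything else is elementary. One further point: the state space for $x$ is $[0,1]$, so differentiability at the endpoints $0$ and $1$ is to be understood as one-sided.
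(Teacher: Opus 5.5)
Your proof is correct, but it takes a different route from the paper. The paper argues analytically. It applies the first-jump decomposition of Lemma~\ref{lem:geng} with $\mu=\delta_{(0,x)}$ to obtain
\[
|R^{a,g}_{s,t}(x)-R^{a,g}_{s,t}(x')|\le H^0_a(s,t)\,|x-x'|+\int_s^t K^0_a(s,u)\,\bigl|R^{a,g}_{u,t}((1-U)\psi_{u-s}(x))-R^{a,g}_{u,t}((1-U)\psi_{u-s}(x'))\bigr|\,\d u ,
\]
and then iterates this inequality with the separate bounds $H^0_a\le 1$ and $K^0_a\le\|f\|_\infty$. The factor $e^{\|f\|_\infty(t-s)}$ comes from ignoring that $H^0_a(s,t)+\int_s^tK^0_a(s,u)\,\d u=1$. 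Differentiability is then only asserted to follow ``analogously'' from the same integral equation.

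You instead use the synchronous coupling. Because the $V$-equation in \eqref{eq:nonhom} does not involve $X$, the two copies share the $V$-path and all jump times, and the $X$-difference is explicit. This gives two gains. First, a sharper constant: your identity actually yields $e^{-(t-s)/\tau}$, not merely $1$. This is the time-inhomogeneous analogue of \eqref{eq:exprate2}, which the paper itself proves by the same coupling with $\tau_c=0$. Second, an explicit formula for the derivative, in place of the paper's sketch.

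What the renewal route buys is uniformity with the rest of Section~\ref{sec:32}. The same integral equations are what the paper needs for the dependence on $v$ and on the input $a$. In those cases the two copies no longer jump together, so your pathwise identity is unavailable.

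One small correction: the pathwise well-posedness you invoke is not Proposition~\ref{prop:1}, which concerns the non-linear equation. It is elementary here. Since $f$ is bounded, only the a.s.\ finitely many atoms of $N$ in $[s,t]\times[0,\|f\|_\infty]$ matter. The solution is built explicitly from the flows $\varphi^a$ and $\psi$ between these atoms, which makes $V$ a functional of $N$ alone.
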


\begin{proof}
Applying Lemma~\ref{lem:geng} to $ \mu = \delta_{(0, x ) }, $ we see that $ x \mapsto \E \left[ g  (V_{s, t}^{ a, \delta_{(0, x)} } , X_{s, t }^{a, \delta_{(0, x)} } )\right] =: R_{s, t}^{a, g } ( x) $ satisfies an integral equation 
of the same type as (\ref{eq:rconvol}) and that 
\begin{multline*}
    | R_{s, t}^{a, g } ( x) - R_{s, t}^{a, g } ( x')| \le H_a^0 (s, t) | x - x' | \\
    + \int_s^t K_a^0 ( s, u ) |  R_{u, t}^{a, g } ( (1 - U) \psi_{ u-s} (x) ) - R_{u, t}^{a, g } ( (1 - U) \psi_{ u-s} (x'))|  \d u .\end{multline*} 
The result then follows from iterating this inequality, by upper bounding $H_a^0 (s, t) \le 1, $ $ K_a^0 (s, t) \le \|f\|_\infty$ and $|(1 - U) \psi_{ u-s} (x) ) -  ( (1 - U) \psi_{ u-s} (x')|\le |x-x'|. $   

Finally, the differentiability of $ R_{s, t}^{a, g } ( x) $ follows analogously, using that 
\[ x \mapsto g( \varphi_{s,t }^a ( 0), \psi_{t-s} ( x) ) \in C^1 ( [0, 1]; \R ). \]
\end{proof}

Using once more Lemma~\ref{lem:geng}, we may then deduce that for all $ 0 \le s \le t \le T, $
$$ (v, x) \mapsto \E \left[ g  (V_{s, t}^{ a, \delta_{(v, x)} } , X_{s, t }^{a, \delta_{(v, x)} } )\right] =: R_{s, t}^{a, g } (v, x)$$
is Lipschitz continuous, with Lipschitz constant $C_T $ depending only on $ T,$ and that this latter function is moreover differentiable, if $g \in C^1 ( \R \times [0, 1 ] ; \R). $ 

Finally, similar arguments as those of Lemma~\ref{lem:36} imply the following result.
\begin{lemma}
There exists a constant $C_T$ such that for all $ a, \tilde a \in C( \R_+; \R ) , $ and for all $ 0 \le s \le t \le T, $ for all functions $g \in Lip_1 $ such that $ \|g \|_\infty \le 1, $
$$ |R_{s,t}^{a,g} (v, x)- R_{s,t}^{\tilde a, g}  (v, x)| \le C_T \int_s^t | a_u - \tilde a_u | \d u . $$ 
\end{lemma}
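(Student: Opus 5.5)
The plan is to mimic the proof of Lemma~\ref{lem:36}, working in two stages: first for starting points of the form $(0,x)$, then for general $(v,x)$ via Lemma~\ref{lem:geng}.

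\emph{Stage 1: starting points $(0,x)$.} Lemma~\ref{lem:geng} with $\mu=\delta_{(0,x)}$ gives
\[
R^{a,g}_{s,t}(x) = g\bigl(\varphi^a_{s,t}(0),\psi_{t-s}(x)\bigr)\,H^0_a(s,t) + \int_s^t K^0_a(s,u)\, R^{a,g}_{u,t}\bigl((1-U)\psi_{u-s}(x)\bigr)\,\d u ,
\]
and the same identity holds with $a$ replaced by $\tilde a$. Subtracting the two identities, I would bound three contributions:
\begin{itemize}
\item $|g(\varphi^a_{s,t}(0),\cdot)-g(\varphi^{\tilde a}_{s,t}(0),\cdot)| \le |\varphi^a_{s,t}(0)-\varphi^{\tilde a}_{s,t}(0)| \le e^{\|b'\|_\infty T}\int_s^t|a_u-\tilde a_u|\,\d u$, by Gronwall applied to the flow ODE;
\item $|H^0_a(s,t)-H^0_{\tilde a}(s,t)| \le \|f'\|_\infty \int_s^t|\varphi^a_{s,r}(0)-\varphi^{\tilde a}_{s,r}(0)|\,\d r \le C_T\int_s^t|a-\tilde a|$, using $\|g\|_\infty\le1$, and similarly for $K^0_a-K^0_{\tilde a}$ by the preceding lemma;
\item the integral term, split as $\int (K^0_a-K^0_{\tilde a})R^{a,g}_{u,t}\,\d u$ plus $\int K^0_{\tilde a}\,(R^{a,g}_{u,t}-R^{\tilde a,g}_{u,t})\,\d u$.
\end{itemize}
The first piece of the integral term is at most $C_T\int_s^t|a-\tilde a|$, since $|R^{a,g}|\le1$.

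\emph{Gronwall step.} Set
\[
D(s) := \sup_{x\in[0,1]} \bigl|R^{a,g}_{s,t}(x)-R^{\tilde a,g}_{s,t}(x)\bigr| \qquad\text{for fixed } t .
\]
Because $(1-U)\psi_{u-s}(x)\in[0,1]$, the sup over $x$ covers the argument appearing inside the integral. The estimates above then give
\[
D(s) \le C_T\int_s^t|a-\tilde a| + \|f\|_\infty\int_s^t D(u)\,\d u .
\]
Since $\int_u^t|a-\tilde a| \le \int_s^t|a-\tilde a|$ for $u\ge s$, a backward Gronwall inequality in $s$ yields $D(s)\le C_T e^{\|f\|_\infty T}\int_s^t|a-\tilde a|$. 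Measurability and boundedness of $D$ follow from $|R|\le1$. An alternative route is to iterate the inequality as a Neumann series.

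\emph{Stage 2: general $(v,x)$.} Apply Lemma~\ref{lem:geng} with $\mu=\delta_{(v,x)}$. The first term is treated as above, with $\varphi^a_{s,t}(v)$ and $H^v_a$ in place of $\varphi^a_{s,t}(0)$ and $H^0_a$; the Gronwall bound for the flow is uniform in $v$. The integral term $\int_s^t K^v_a(s,u)\,R^{a,g}_{u,t}\bigl((1-U)\psi_{u-s}(x)\bigr)\,\d u$ is handled using the $K$-difference lemma together with the Stage~1 bound. No further Gronwall argument is needed here, and the constant is uniform in $v,x$ and $g$.

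The main obstacle is making sure every constant is independent of $v$, $x$ and $g$. The point requiring care is that the $K^v_a$ and $H^v_a$ differences involve $f'$ evaluated along different flows, which is handled by $\|f'\|_\infty<\infty$ together with the uniform flow Gronwall bound.
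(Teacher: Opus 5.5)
Your proposal is correct and is essentially the paper's argument. The paper only says the result follows by ``similar arguments as those of Lemma~\ref{lem:36}'': subtract the first-jump identities of Lemma~\ref{lem:geng} for $a$ and $\tilde a$, apply Gronwall first at starting points $(0,x)$, then pass to general $(v,x)$. Your supremum over $x\in[0,1]$ is exactly the adaptation needed, since $R^{a,g}_{s,t}$ is no longer affine in $x$.

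One small correction: measurability of $D$ does not follow from $|R|\le 1$. It follows from the Lipschitz continuity of $x\mapsto R^{a,g}_{u,t}(x)$ proved in the preceding lemma, so the supremum can be taken over rational $x$. Alternatively, your pointwise iteration of the inequality bypasses the issue entirely.
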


Analogously to the proof of Lemma~\ref{lem:38} we deduce from this the following result. 

\begin{lemma}
There exists a constant $C_T$ such that for all $ a, \tilde a \in C( \R_+; \R ) , $ and for all $ 0 \le s \le t \le T, $ for all functions $g \in Lip_1 $ such that $ \|g \|_\infty \le 1, $
$$ | \E \left[ g  (V_{s, t}^{ a, \mu } , X_{s, t }^{a, \mu } )\right] - \E \left[ g  (V_{s, t}^{ \tilde a, \tilde \mu } , X_{s, t }^{\tilde a, \tilde \mu } )\right]| \le C_T \left( \int_s^t | a_u - \tilde a_u | \d u + d_{BL} ( \mu , \tilde \mu ) \right) . $$
\end{lemma}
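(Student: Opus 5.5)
The plan is to reduce the statement to the previous lemma (the Lipschitz bound for $R^{a,g}_{s,t}$ at a fixed initial point) by integrating, and then to switch the initial law using the regularity of $(v,x)\mapsto R^{\tilde a,g}_{s,t}(v,x)$. The Markov property at time $s$ gives
\[
\E\left[ g(V^{a,\mu}_{s,t},X^{a,\mu}_{s,t})\right] = \int_{\R\times[0,1]} R^{a,g}_{s,t}(v,x)\,\mu(\d v,\d x),
\]
and similarly for $(\tilde a,\tilde\mu)$. The difference then splits into two terms:
\[
\int \bigl(R^{a,g}_{s,t}-R^{\tilde a,g}_{s,t}\bigr)(v,x)\,\mu(\d v,\d x) \;+\; \int R^{\tilde a,g}_{s,t}(v,x)\,(\mu-\tilde\mu)(\d v,\d x).
\]

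For the first term, the previous lemma gives a bound of $C_T\int_s^t|a_u-\tilde a_u|\,\d u$ pointwise in $(v,x)$, uniformly in $g$. Integrating this against the probability measure $\mu$ gives the same bound.

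For the second term, I use two facts about $R^{\tilde a,g}_{s,t}$. It is bounded by $\|g\|_\infty\le 1$. It is also Lipschitz in $(v,x)$ with a constant $C_T$ that depends only on $T$, and not on $\tilde a$, $g$ or $s,t$; this was noted after the lemma on $x\mapsto R^{a,g}_{s,t}(x)$.

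The Lipschitz claim is the step to verify carefully. I would use Lemma~\ref{lem:geng} with $\mu=\delta_{(v,x)}$. The first term involves $g(\varphi^a_{s,t}(v),\psi_{t-s}(x))H^v_a(s,t)$, which is Lipschitz in $v$ because $g$ is 1-Lipschitz and bounded, and because $\varphi^a_{s,t}$ has Lipschitz constant at most $e^{\|b'\|_\infty T}$ by Gronwall. Lipschitz continuity of $H^v_a$ in $v$ then follows from $f$ being Lipschitz and bounded. The second term involves $K^v_a(s,u)$, which is Lipschitz in $v$ for the same reason, times $R^{a,g}_{u,t}((1-U)\psi_{u-s}(x))$. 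That factor is bounded by $1$ and is Lipschitz in $x$ with constant $e^{\|f\|_\infty T}$ by the previous lemma. None of these constants depends on $a$.

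With these two facts, $R^{\tilde a,g}_{s,t}/\max(1,C_T)$ is an admissible test function in \eqref{eq:distanceBL}. Hence the second term is at most $\max(1,C_T)\,d_{BL}(\mu,\tilde\mu)$. Adding the two bounds proves the claim, after enlarging $C_T$ if necessary.
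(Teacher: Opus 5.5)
Your proposal is correct and follows essentially the paper's route. The paper proves this lemma "analogously to Lemma~\ref{lem:38}": it integrates the pointwise bound in the input $a$ against $\mu$, then handles the change of initial law by using that $(v,x)\mapsto R^{\tilde a,g}_{s,t}(v,x)$ is bounded and Lipschitz with a constant depending only on $T$. Your explicit verification that this Lipschitz constant does not depend on $\tilde a$ or $g$ fills in a detail the paper only asserts.
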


We are now able to give the proof of Proposition~\ref{prop:1}.
\begin{proof}[Proof of Proposition~\ref{prop:1}]
The existence of a solution is not difficult and follows analogously to~\cite{CTV}. Let $a_t = \E f( V_t^\mu ) X_t^\mu  .$ Applying Ito's formula, it follows that $ a \in C ( [ 0, T]; \R). $ The proof then follows from the observation that $ (V^\mu, X^\mu) $ is solution of~\eqref{eq:nonhom} with $a.$ See the proof of Theorem 2.2 in \cite{MNA} for details.    
\end{proof}

\subsection{Proof of Theorem~\ref{prop:erglin}}\label{sec:coupling}
Recall that the process $(V^{\alpha , \mu }_t, X^{\alpha , \mu }_t)$ is defined in~\eqref{eq:valphamu} and that we write $ (V^\alpha, X^\alpha), $ whenever the process starts from some fixed initial conditions $ (v, x), $ that is, when $ \mu = \delta_{(v,x)}. $ Recall that $P^\alpha _t ( (v,x), \cdot ) = {\mathcal L} ( (V_t^\alpha, X_t^\alpha ) | \P_{(v,x) }) .$ We start with the following preliminary result. 

\begin{proposition}\label{theo:invm}
Grant Assumptions~\ref{ass:fmin}-- \ref{ass:minf}. Then the process $ (V^\alpha, X^\alpha ) $ is positive Harris recurrent and possesses a unique invariant probability measure $ \mu_\infty (\d v, \d x) .$ Moreover, there exist $ \lambda , C  > 0 $ such that 
for all $ v, \tilde v \in \R, x, \tilde x \in [0, 1 ], $
\begin{eqnarray} \label{eq:exprate}
d_{BL} ( P^\alpha _t ( (v,x), \cdot ), P^\alpha _t ( (\tilde v,\tilde x), \cdot )) 
&\le &C_* e^{ - \lambda_* t }, \nonumber \\
d_{BL} ( P^\alpha _t ( (v,x), \cdot ),  \mu^\alpha_\infty )  &\le &  C e^{ - \lambda t } . 
\end{eqnarray}
Moreover, for all $ x , \tilde x \in [0, 1] ,$ 
\begin{equation}\label{eq:exprate2}
d_{BL} ( P^\alpha _t ( (0,x), \cdot ), P^\alpha _t ( (0,\tilde x), \cdot ) ) \le  e^{ - \frac{1}{\tau} t } |x - \tilde x |.    
\end{equation}
\end{proposition}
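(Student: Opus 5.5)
We argue by coupling, in three steps. First we show the contraction \eqref{eq:exprate2} for two copies started from the same potential. Then we couple the first spike times of two copies with different potentials, which reduces the general bound in \eqref{eq:exprate} to the case of a common potential. Finally, existence, uniqueness and positive Harris recurrence of $\mu^\alpha_\infty$ follow from these contractions.

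\textbf{Step 1: common potential $v=0$.} Run both processes, started from $(0,x)$ and $(0,\tilde x)$, with the same Poisson random measure $N$. The potential component does not depend on the $X$ coordinate, so $V_t=\tilde V_t$ for all $t$ and both processes spike at the same times. Between spikes, $X_t-\tilde X_t$ evolves by $\frac{\d}{\d t}(X_t-\tilde X_t)=-(X_t-\tilde X_t)/\tau$. At each spike it is multiplied by $(1-U)\le 1$. Hence $|X_t-\tilde X_t|\le e^{-t/\tau}|x-\tilde x|$ almost surely. Since test functions $g\in Lip_1$ with $\|g\|_\infty\le1$ are $1$-Lipschitz in $x$, this gives \eqref{eq:exprate2}. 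The same argument works for any common starting potential $v$, with constant $1$.

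\textbf{Step 2: coupling the first spikes.} Start from $(v,x)$ and $(\tilde v,\tilde x)$. We must first reach $\R_+$, or the interval $[0,\sigma_\alpha)$. After the first spike a process sits at $0$ and then stays in the forward orbit of $0$, so it suffices to couple processes with $v,\tilde v\ge 0$. A general starting point reaches this situation after its first spike, whose time has an exponential tail by Assumption~\ref{ass:minf}.

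For $v,\tilde v\ge0$, drive the two spike times with the same $N$. Both processes spike at a common time whenever an atom $(s,z)$ of $N$ satisfies $z\le f(\varphi^\alpha_s(v))\wedge f(\varphi^\alpha_s(\tilde v))$. By Assumption~\ref{ass:fmin} there are $t_0>0$ and $p>0$ such that, uniformly in $v,\tilde v\ge0$, the probability that the first spike of each process is such a common spike before $t_0$ is at least $p$. Once a common spike happens, both potentials equal $0$, and Step 1 then contracts the $X$ difference at rate $1/\tau$.

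If the coupling fails, we restart at the next spike of either process. By the strong Markov property, the new starting state is again of the form $(0,\cdot)$ paired with a state whose potential lies on the flow from $0$. The number of attempts is therefore dominated by a geometric variable with success probability $p$, and the coupling time $T_c$ has an exponential moment.

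The bound is obtained as follows. For $g$ in the unit ball,
\[ |\E_{(v,x)}g(V^\alpha_t,X^\alpha_t)-\E_{(\tilde v,\tilde x)}g(V^\alpha_t,X^\alpha_t)|\le 2\P(T_c>t/2)+\E\bigl[e^{-(t-T_c)/\tau}\indica{\{T_c\le t/2\}}\bigr]. \]
Both terms are at most $Ce^{-\lambda_* t}$, which proves the first line of \eqref{eq:exprate}.

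\textbf{Step 3: invariant measure and the remaining claims.} The bound of Step 2 is uniform in the starting points, so the family $(P^\alpha_t((0,0),\cdot))_{t\ge0}$ is Cauchy in $d_{BL}$, by the semigroup property. Its limit $\mu^\alpha_\infty$ is invariant. Integrating the first line of \eqref{eq:exprate} against $\mu^\alpha_\infty(\d\tilde v,\d\tilde x)$ gives the second line, and uniqueness follows.

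For Harris recurrence, use the regeneration at spikes: from any starting point the process returns to $\{0\}\times[0,1]$ infinitely often. Restricted to these returns, the $x$-coordinate at a spike has a law with an absolutely continuous component, because spike times have densities $K^0_\alpha$ and $x\mapsto\psi$ is smooth. This yields a Doeblin-type minorization, hence positive Harris recurrence.

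\textbf{Main obstacle.} The delicate point is Step 2: making the coupling uniform in the starting potentials, including negative or large $v$ and the case $\sigma_\alpha<0$. This requires bounding the time needed to reach the orbit of $0$ using Assumption~\ref{ass:minf} and the Lipschitz flow bound \eqref{eq:Lipflow}. It also requires checking that uncoupled attempts restart from a configuration in which Assumption~\ref{ass:fmin} again applies.
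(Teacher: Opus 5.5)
Your proposal follows the paper's proof essentially step by step: you couple the two copies through the same Poisson measure and take the first common spike as coupling time, with a geometric number of attempts from Assumption~\ref{ass:fmin}, then use the $e^{-t/\tau}$ contraction of the $X$-difference, split at $t/2$, integrate against $\mu^\alpha_\infty$, and take $\tau_c=0$ for \eqref{eq:exprate2}; restarting at spikes rather than on fixed windows $[kT,(k+1)T]$, and building $\mu^\alpha_\infty$ as a Cauchy limit rather than invoking ``classical arguments'', are only cosmetic differences. One caveat on your flagged obstacle: Assumption~\ref{ass:minf} only controls flows started in $\R_+$, and the uniform bound cannot in general be extended to very negative $v$ (e.g.\ if $f$ vanishes on $(-\infty,0]$ and $b(v)=b_0-v$ with $b_0+\alpha>0$, no spike occurs for a time of order $\log\abs{v}$), so the estimate can only be expected for $v,\tilde v\in\R_+$, which is also the only range the paper's coupling argument actually covers.
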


To prove the above result, we consider two solutions $ (V^\alpha, X^\alpha)$ starting from $ (v, x)$ and $ ( \tilde V_t^\alpha, \tilde X_t^\alpha) $ starting from $ ( \tilde v, \tilde x ), $ where $ x , \tilde x \in [0, 1 ]. $ %
We take the synchronous coupling of the two processes, that is, we use the same underlying Poisson random measure and make the two processes jump together as often as possible.
We introduce the coupling time 
$$ \tau_c = \inf \{ t \geq 0 :  \Delta Z_t =  \Delta \tilde Z_{t}  \}, $$
with 
$$ Z_t = \int_{ [0, t ] \times \R_+} \indic{ z \le f ( V_{s-} ) } N (\d s, \d z) , $$
and 
$$ \tilde Z_t = \int_{ [0, t ] \times \R_+} \indic{ z \le f ( \tilde V_{s-} ) } N (\d s, \d z) .$$
$ \tau_c $ is the first synchronous jump of the processes $ Z $ and $\tilde Z. $ At time $ \tau_c,$ both processes $ V$ and $ \tilde V$ are synchronously reset to $0  $ and then stay together for all future times, since their dynamics is driven by the same underlying Poisson random measure and since they have the same drift. In other words, starting from time $ \tau_c, $ all jumps of $ Z $ and $ \tilde Z$ will be synchronous jumps.

In what follows, we write $ \P_{(v,\tilde v, x, \tilde x)}$ for the probability measure corresponding to the above coupling. $ \E_{(v,\tilde v, x, \tilde x)}$ denotes the corresponding expectation. Recall the definition of the flow $ \varphi^\alpha $ in~\eqref{eq:varphialpha}.

\begin{proposition}
There exists $\kappa > 0$ such that  
\[ \sup_{ x, \tilde x \in [0, 1 ] } \sup_{ v, \tilde v \in \R_+} \E_{(v,\tilde v, x, \tilde x)} e^{ \kappa \tau_c} < \infty.\]
\end{proposition}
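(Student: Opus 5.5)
The plan is to exploit the regeneration at synchronous jumps. Under the synchronous coupling, as long as neither process has jumped, $V$ and $\tilde V$ follow the deterministic flows $\varphi^\alpha_t(v)$ and $\varphi^\alpha_t(\tilde v)$. A synchronous jump occurs at rate $f(V_{t-})\wedge f(\tilde V_{t-})$, which is the mass of $N$ falling below both thresholds. A non-synchronous jump (only one process spikes) occurs at rate $|f(V_{t-})-f(\tilde V_{t-})|$ and resets only one coordinate to $0$. The depression variables $X,\tilde X$ play no role in the jump rates, so $\tau_c$ does not depend on $x,\tilde x$ and we may drop them. The strategy has three steps: a uniform lower bound on the probability of coupling within a fixed time window, starting from any pair $(v,\tilde v)\in\R_+^2$; a Markov iteration giving a geometric tail for $\tau_c$; and the exponential moment for small $\kappa$.

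\emph{Step 1 (one-window coupling probability).} By Assumption~\ref{ass:fmin}, there exist $t_0>0$ and $c>0$ such that $\inf_{v,\tilde v\ge 0}\int_0^{t_0} f(\varphi^\alpha_s(v))\wedge f(\varphi^\alpha_s(\tilde v))\,\d s\ge c$. On the window $[0,t_0]$, either a synchronous jump occurs, or some non-synchronous jump occurs first. To keep the argument clean, I would lower-bound the probability that the first atom of $N$ below $f(V_{s-})\vee f(\tilde V_{s-})$ is synchronous and occurs in $[0,t_0]$. Before that atom both processes follow the deterministic flows, so this probability equals
\[
\int_0^{t_0} \bigl(f(\varphi_s(v))\wedge f(\varphi_s(\tilde v))\bigr)\exp\Bigl(-\int_0^s f(\varphi_u(v))\vee f(\varphi_u(\tilde v))\,\d u\Bigr)\d s .
\]
Since $f\le\|f\|_\infty$, the exponential is at least $e^{-2\|f\|_\infty t_0}$, so the probability is at least $p:=c\,e^{-2\|f\|_\infty t_0}>0$, uniformly in $v,\tilde v\ge0$.

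\emph{Step 2 (iteration).} If coupling fails in the first window, the pair $(V_{t_0},\tilde V_{t_0})$ is again a starting point to which Step 1 should apply, via the strong Markov property of the coupled pair (a Markov process driven by the same $N$). This is where I expect the main obstacle: Assumption~\ref{ass:fmin} is uniform only over $v,\tilde v\in\R_+$, while the state at time $t_0$ could be negative. Since the statement restricts to initial data in $\R_+$, I would first try to show that $\R_+$ is invariant, which holds as soon as $b(0)+\alpha\ge 0$ (resets go to $0$ and the flow cannot cross $0$ downward). If that fails, I would instead show that after the first jump of each process both lie in the set $[0,\sigma_\alpha)$, which is forward-invariant by the flow and reached after reset. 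In that case I would work with the times when both have already jumped at least once, and use Assumption~\ref{ass:minf} to control the waiting time for a first jump from arbitrary initial points. In fact, Assumption~\ref{ass:fmin} (or Assumption~\ref{ass:minf}) gives a uniform lower bound on the probability of a jump in a window, so each process jumps within time $t_0$ with probability bounded below. With this in place, the Markov property gives $\P(\tau_c>kt_0)\le(1-p)^k$.

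\emph{Step 3 (exponential moment).} From the geometric tail,
\[
\E e^{\kappa\tau_c}\le\sum_{k\ge0}e^{\kappa(k+1)t_0}(1-p)^k<\infty
\]
whenever $e^{\kappa t_0}(1-p)<1$. Choosing such a $\kappa>0$ gives a bound independent of $v,\tilde v\in\R_+$ and $x,\tilde x\in[0,1]$, which proves the claim.
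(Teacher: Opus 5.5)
Your Steps 1 and 3 follow the paper's argument. The paper also makes one coupling attempt per window $[kT,(k+1)T]$, bounds its conditional success probability below by $c\,e^{-\|f\|_\infty T}$ using Assumption~\ref{ass:fmin}, and concludes with a conditional Borel--Cantelli/geometric-tail argument. Your factor $e^{-2\|f\|_\infty t_0}$ is looser than needed, since the first atom below $f(V_{s-})\vee f(\tilde V_{s-})$ arrives at rate at most $\|f\|_\infty$, but it is harmless.

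The issue you raise in Step 2 is real, and the paper passes over it. The paper applies the bound at the random states $(V^\alpha_{kT},\tilde V^\alpha_{kT})$, whereas $c$ is an infimum over $\R_+^2$ only. When $b(0)+\alpha\ge 0$, $\R_+$ is invariant: resets go to $0$, and by uniqueness of solutions the flow cannot cross $0$ downward. In that regime your proof is complete.

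Your fallback for $b(0)+\alpha<0$, however, does not close the gap. After both processes have jumped once, they lie on the orbit $\{\varphi^\alpha_s(0):s\ge 0\}=(\sigma_\alpha,0]\subset(-\infty,0]$. Assumption~\ref{ass:fmin} says nothing about pairs of starting points there, so the constant $p$ of Step 1 is not available for the iteration. A uniform lower bound on the probability that \emph{each} process jumps does not help either: a non-synchronous jump resets only one coordinate and leaves the pair uncoupled. What you actually need is
\[ \inf_{s,s'\ge 0}\int_0^{t_0} f(\varphi^\alpha_{s+u}(0))\wedge f(\varphi^\alpha_{s'+u}(0))\,\d u>0 \]
for some $t_0$.

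If $\sigma_\alpha$ is finite, this bound does hold:
\begin{enumerate}
\item Assumption~\ref{ass:minf} at $v=0$ gives $f(\sigma_\alpha)=\lim_{t\to\infty}\frac1t\int_0^t f(\varphi^\alpha_s(0))\,\d s>0$.
\item The orbit $\varphi^\alpha_{s+u}(0)$ decreases to $\sigma_\alpha$, with $|\varphi^\alpha_{s+u}(0)-\sigma_\alpha|\le|\varphi^\alpha_u(0)-\sigma_\alpha|$. Hence both integrands exceed $f(\sigma_\alpha)/2$ after a time $u_0$ independent of $s,s'$.
\item The two first jump times have uniform exponential tails: $H^v(t)\le c_*e^{-f_*t}$ for $v\ge 0$, from Assumption~\ref{ass:minf}.
\end{enumerate}
Applying the strong Markov property at the later of the two first jump times then yields the claim. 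If $\sigma_\alpha=-\infty$, a further argument is still needed.
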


\begin{proof}
Recall that by Assumption~\ref{ass:fmin}, 
\[ \lim_{t \rightarrow \infty} \inf_{v, \tilde{v} \in \R_+} \int_0^t f(\varphi^\alpha_s(v)) \wedge f(\varphi^\alpha_s(\tilde{v})) \d s > 0,\]
and that $f$ is bounded and Lipschitz. So there exist $T> 0 $ and $c> 0 $ such that for all $v, \tilde{v},$
\[ \int_0^T f(\varphi^\alpha_s(v)) \wedge f(\varphi^\alpha_s(\tilde{v})) \d s  \geq c.  \]
Between each $kT$ and $(k+1)T,$ $ k \geq 0, $ we make a coupling attempt between the two processes.
It succeeds with probability at least
\[ \P_{(v,\tilde v, x, \tilde x)} ( \tau_c \in ] kT, (k+1) T ]| {\mathcal F}_{kT} ) \geq \int_0^T f(\varphi^\alpha_s(V^{\alpha}_{kT})) \wedge f(\varphi^\alpha_s(\tilde V^{\alpha}_{kT})) \exp(-\| f \|_\infty s) \d s. \]
This probability is lower bounded by
\[ e^{-\| f\|_\infty T} c > 0 . \]
The conditional Borel-Cantelli lemma then allows us to conclude. 
\end{proof}

\begin{proof}[Proof of Proposition~\ref{theo:invm}]
At time $\tau_c,$ both processes $ V^\alpha $ and $ \tilde V^\alpha $ are synchronously reset to $0 $ and then stay together for all times. 

Notice that $ X^\alpha_t, \tilde X^\alpha_t \in [0, 1] $ by construction. Therefore, 
\begin{equation}\label{eq:distx}
     | X^\alpha_t - \tilde X_t^\alpha | \le 1_{\{ \tau_{c} > t/2\}} + e^{ - \frac{1}{\tau} ( t - \tau_{c} ) } | X^\alpha_{\tau_{c}} - \tilde X_{\tau_{c}}^\alpha | 1_{\{ \tau_{c} \le  t/2\}} \le 1_{\{ \tau_{c} > t/2\}} + e^{ - \frac{1}{\tau} (t/2)   } ,
\end{equation}     
where we have used that $ | X_s^\alpha - \tilde X_s^\alpha | \le 1$ for all $s.$ Therefore, 
$$ \E | X^\alpha_t - \tilde X_t^\alpha | \le C e^{ - \kappa (t/ 2)} + e^{ - \frac{1}{\tau} (t/2)} . $$
Let now $g \in Lip_1 $ such that $g$ is bounded by $1.$ We use that $ | g(v, x ) - g( \tilde v , \tilde x) | \le \indic{v \neq \tilde v  } + | g ( v, x) - g(v, \tilde x) |$ to obtain that 
$$ | g ( V_t^\alpha, X_t^\alpha ) - g ( \tilde V_t^\alpha, \tilde X_t^\alpha ) | \le \indic{V_t^\alpha \neq V_t^\alpha } + | X_t^\alpha - \tilde X_t^\alpha | \le \indic{  \tau_c > t } + | X_t^\alpha - \tilde X_t^\alpha | , $$
such that
$$ \E_{(v,\tilde v, x, \tilde x)} | g ( V_t^\alpha, X_t^\alpha ) - g ( \tilde V_t^\alpha, \tilde X_t^\alpha ) | \le \P (  \tau_c > t) + C e^{ - \kappa (t/ 2)} + e^{ - \frac{1}{\tau} (t/2)} . $$
This implies that, for convenient positive constants $ C_*, \lambda_*, $ 
$$ | \E_{ (v, x) } g( V_t^\alpha, X_t^\alpha)  -\E_{ (\tilde v, \tilde v) } g( V_t^\alpha, X_t^\alpha) | \le C_* e^{ - \lambda_* t }.$$

Classical arguments imply that $ ( V^\alpha, X^\alpha)$ is Harris recurrent, possessing a unique invariant probability distribution $ \mu_\infty (\d v, \d x). $ Integrating the above inequality against 
$ \mu_\infty (d \tilde v, d\tilde x)$ concludes the proof of (\ref{eq:exprate}). 

To prove (\ref{eq:exprate2}), notice that when $ v = \tilde v = 0, $ both processes $ V^\alpha_t$ and $\tilde V_t^\alpha$ are immediately coupled such that $ \tau_c = 0. $ The assertion then follows from (\ref{eq:distx}).
\end{proof}

We are now able to give the proof of Theorem~\ref{prop:erglin}. 

\begin{proof}[Proof of Theorem~\ref{prop:erglin}]
We fix some $v, x, \tilde{v}, \tilde{x}$ and we show that 
\begin{equation}\label{eq:step1} 
d_{BL} (P_t^\alpha ( (v,x) , \cdot ) , P^\alpha_t ( (\tilde v, \tilde x), \cdot ))  \leq C_* e^{-\lambda_* t} (|v-\tilde{v}|\wedge 1  + |x - \tilde{x}|). \end{equation}
To do so, we take some test function $g \in Lip_1$ with $ \|g\|_\infty \le 1 $ and we define
\[ R_t(v, x) := \E_{(v,x)} g(V^{\alpha}_t, X^{\alpha}_t). \]
Recall that $K^v(t)$ denotes the probability density of the first jump time of $Z^\alpha_t$, under $ \P_{(v,x)}, $ and that this density is given by
\begin{equation}\label{eq:Kt}
    	K^v(t) := - \frac{\d }{\d t} \P(Z^\alpha_t = 0) = f(\varphi^\alpha_t(v)) \exp\left( -\int_0^t{f(\varphi^\alpha_s(v)) \d s} \right).   
\end{equation}
The same arguments as those used in Section~\ref{sec:32} imply that 
\begin{align*} R_t(v, x) - R_t(\tilde{v}, \tilde{x}) &= H^v(t) g(\varphi^\alpha_t(v), \psi_t(x)) - H^{\tilde{v}}(t) g(\varphi^\alpha_t(\tilde{v}), \psi_t(\tilde{x})) \\ &\quad  + \int_0^t (K^v(s) - K^{\tilde{v}}(s))  R_{t-s}(0, (1-U)\psi_s(x)) \d s \\
&\quad  + \int_0^t K^{\tilde{v}}(s) (R_{t-s}(0, (1-U)\psi_s(x)) - R_{t-s}(0, (1-U)\psi_s(\tilde{x}))) \d s,
\end{align*}
where $ H^v ( t) = \exp \left(- \int_0^t f ( \varphi^\alpha_s ( v) ) \d s  \right). $ 
Since
\[ \abs{g(v, x) - g(\tilde{v}, \tilde{x})} \leq |v-\tilde{v}|\wedge 1  + |x - \tilde{x}|, \]
we have, recalling (\ref{eq:Lipflow}), 
\[ | g(\varphi^\alpha_t(v), \psi_t(x)) -  g(\varphi^\alpha_t(\tilde{v}), \psi_t(\tilde{x})) | \le \left( C |v-\tilde{v}|\right) \wedge 1  + | x- \tilde x| ,  \]
since $ | \psi_t(x) -  \psi_t(\tilde{x})|\le | x - \tilde x|. $

In what follows, we denote by $ F$ a constant such that $ \|f\|_{Lip} , \|f\|_\infty \le F. $ In particular, we have that $ | f (x) - f(y ) | \le F ( |x-y | \wedge 1 ). $ By (\ref{eq:minf}) there exist $t_* > 0 $ and $ f_* > 0 $ such that for all $ t \geq t_*  $ and for all $v,$ 
$$ \int_0^t f( \varphi^\alpha_s ( v)) \d s \geq f_* t .$$ 
Lower bounding $ f_* t $ by $ f_* ( t- t_*) $ and choosing $c_* = e^{ f_* t_*}, $ we deduce that for all $v,$ and for all $ t \geq 0, $ 
$$ H^v ( t) \le c_* e^{ - f_* t } \mbox{ and } | H^v ( t) - H^{ \tilde v } (t) | \le c_* e^{ - f_* t }  t F ( | v - \tilde v| \wedge 1 ) . $$
Therefore, 
\begin{multline*}
    |H^v(t) g(\varphi^\alpha_t(v), \psi_t(x)) - H^{\tilde{v}}(t) g(\varphi^\alpha_t(\tilde{v}), \psi_t(\tilde{x})) | \le \\
  \le   H^v (t) | g(\varphi^\alpha_t(v), \psi_t(x)) -  g(\varphi^\alpha_t(\tilde{v}), \psi_t(\tilde{x}))|+ | g(\varphi^\alpha_t(\tilde{v}), \psi_t(\tilde{x}))| | H^v ( t) - H^{ \tilde v } (t )| \\
     \leq c_* e^{ - f_* t } \left( ( C + F t ) \left( | v - \tilde v|\wedge 1 \right)  +| x - \tilde{x}| \right)\\
     \le C e^{ - (f_* / 2) t } ( | v - \tilde v|\wedge 1   +| x - \tilde{x} | ),
\end{multline*}
where we took some convenient constant to obtain the last line. 
In addition, using~\eqref{eq:exprate}, we have that
\[ R_{t}(0, x)  = \E_{(0, x)} g(V^\alpha_t, X^\alpha_t) = \langle g, \mu_\infty \rangle + \xi(t, x),  \]
where the function $\xi(t, x)$ satisfies $\sup_{x} \sup_{t \geq 0} e^{\lambda t} \abs{\xi(t, x)} < \infty$. So 
\begin{multline*}
\int_0^t  (K^v(s) - K^{\tilde{v}}(s))  R_{t-s}(0, (1-U)\psi_s(x)) \d s =  \langle g, \mu_\infty \rangle  \int_0^t  (K^v(s) - K^{\tilde{v}}(s)) \d s  +\\
+ \int_0^t  (K^v(s) - K^{\tilde{v}}(s) ) \xi (t-s, (1-U)\psi_s(x) ) \d s =\\
= \langle g, \mu_\infty \rangle (H^v ( t) - H^{ \tilde v}(t)) +\int_0^t  (K^v(s) - K^{\tilde{v}}(s) ) \xi (t-s, (1-U)\psi_s(x) ) \d s  =: T_1 + T_2 .
\end{multline*}
Here we have used that $\int_0^t  K^v(s) \d s = 1 - H^v (t) .$ We have that $ | T_1 | \le C e^{ - (f_*/2) t } | v - \tilde v |\wedge 1  .$
Moreover, 
\begin{multline*}
|K^v(s) - K^{\tilde{v}}(s) |\le | f( \varphi^\alpha_s( v) ) ( H^v (s) - H^{\tilde{v}} (s) )| + |H^{\tilde{v}} (s) ( f( \varphi^\alpha_s ( v) )- f( \varphi^\alpha_s ( \tilde{ v}) |
\le \\
\| f \|_\infty c_* e^{ - f_* s } F s \left( | v - \tilde v|\wedge 1 \right)  + c_* C e^{ - f_* s } F \left( |v - \tilde v|\wedge 1 \right)  \le C e^{- (f_*/2) s } \left(  | v - \tilde v| \wedge 1\right) , 
\end{multline*}
where we choose yet another constant such that the last estimate holds true for all $s.$ 

As a consequence, assuming without loss of generality that $ f_* / 2 \neq \lambda, $
$$|T_2| \le C \left( | v - \tilde v|\wedge 1\right)  \int_0^t e^{- (f_*/2) s } e^{ - \lambda ( t- s) } \d s  \le \frac{C}{ | f_*/ 2 - \lambda |} e^{ - (( f_*/2 )\wedge \lambda) t }  | v - \tilde v |\wedge 1 . $$
Finally, using (\ref{eq:exprate2}), we have that 
$$ | R_{t-s}(0, (1-U)\psi_s(x)) - R_{t-s}(0, (1-U)\psi_s(\tilde{x})) | \le e^{ - \frac{1}{\tau} (t-s) } | x - \tilde x |, $$
such that we obtain similarly 
$$ |\int_0^t K^{\tilde{v}}(s) (R_{t-s}(0, (1-U)\psi_s(x)) - R_{t-s}(0, (1-U)\psi_s(\tilde{x}))) \d s| \le C \int_0^t e^{ - f_* s} e^{ - \frac{1}{\tau} ( t-s) }  
| x - \tilde{x}| \d s , $$
and the conclusion follows for some convenient constants $ C_*, \lambda_* > 0.$ 

Integrating the upper bound $ C e^{ - \lambda t } ( |v - \tilde v | \wedge 1) + | x - \tilde x|$ against $ ( \mu - \tilde \mu ) ( \d v, \d x ) $ then yields the assertion for general initial conditions.

\end{proof}

\subsection{Proof of Proposition~\ref{prop:invm}}\label{sec:proofinvms}
We are now able to give the 
\begin{proof}[Proof of Prop.~\ref{prop:invm}]
The proof of item 1. is standard and therefore omitted. 

To prove item 2., take the test function $g(v, x) = e^{ - a v} x,$ for some $ a > 0.$  Then, denoting $A^\alpha $ the generator of the linearized process with input $\alpha,$ we have 
$$ A^\alpha g (v,x) = - a g(v,x)   ( b(v) + \alpha ) + e^{ - a v} \frac{1-x}{\tau}  + f (v) [ (1- U) x - g(v, x) ] ,  $$
where $b(v) = \bar V - v.$

Integrating against the invariant measure $\mu_\infty (\d v, \d x) = \nu_\infty (v) \mu_\infty ( v, \d x) \d v $ %
and writing $ M( v) = \int_{[0, 1]} x \mu_\infty (v, \d x) $ gives 
\begin{multline} a \int_0^\infty \nu_\infty ( v) e^{ - a v } ( b(v) + \alpha ) M(v)\d v   = \int_0^\infty  \nu_\infty ( v) e^{ -a v}  \frac{ 1 - M(v) }{\tau} \d v - U \int_0^\infty  f(v) M(v) \nu^\infty (v) \d v \\
 + \int_0^\infty  f(v) [ 1 - e^{ - a v} ] M(v) \nu_\infty (v) \d v. 
\end{multline}
Here, we have separated the term $ f (v) [ (1- U) x - g(v, x) ]  = - U f(v) x + f (v) x[ 1 - e^{ - a v}  ].$ 
 
Observe that $ 1 - e^{ - av} = a \int_0^v e^{ - a u } \d u ,$ such that we can rewrite the last expression as 
$$\int_0^\infty  f(v) [ 1 - e^{ - a v} ] M(v) \nu^\infty (v) \d v =  a \int_0^\infty e^{ - a u} \left( \int_u^\infty f(v) M(v) \nu_\infty (v) \d v \right) \d u .$$   
Therefore, 
\begin{multline} a \int_0^\infty  \nu_\infty ( v) e^{ - a v } ( b(v) + \alpha ) M(v)  = a \int_0^\infty e^{ - a u} \left( \int_u^\infty f(v) M(v) \nu_\infty (v) \d v \right) \d u \\
+ \int_0^\infty \nu_\infty ( v) e^{ -a v}  \frac{ 1 - M(v) }{\tau} \d v - U \int_0^\infty  f(v) M(v) \nu_\infty (v) \d v .
\end{multline}
We now let $ a \to 0 $ and deduce that 
\begin{equation}\label{eq:ufm}  U \int_0^\infty  f(v) M(v) \nu_\infty (v) \d v = \int_0^\infty  \nu_\infty ( v)  \frac{ 1 - M(v) }{\tau} \d v .
\end{equation}
Therefore, 
$$ \int_0^\infty  \nu_\infty ( v) e^{ -a v}  \frac{ 1 - M(v) }{\tau} \d v - U \int_0^\infty  f(v) M(v) \nu_\infty (v) \d v = \int_0^\infty  \nu_\infty ( v)[  e^{ -a v} - 1 ] \frac{ 1 - M(v) }{\tau} \d v , $$
which can once more be rewritten as 
$$ - a \int_0^\infty e^{ - a u } \left( \int_u^\infty \nu_\infty ( v) \frac{ 1 - M(v) }{\tau} \d v \right) \d u .$$ 
Dividing everything by $a, $ we therefore conclude that we have equality of Laplace transforms 
\begin{multline} 
 \int_0^\infty \nu_\infty ( v) e^{ - a v } ( b(v) + \alpha ) M(v) \d v  =  \int_0^\infty e^{ - a u} \left( \int_u^\infty f(v) M(v) \nu_\infty (v) \d v \right) \d u \\
 - \int_0^\infty e^{ - a u } \left( \int_u^\infty \nu_\infty ( v)\frac{ 1 - M(v) }{\tau} \d v \right) \d u. 
\end{multline}
On the left hand side, we have the Laplace transform of the signed measure $ \nu_\infty (v) [ b(v)  + \alpha ] M(v)\d v  .$ Here the only unknown quantity is $M(v).$ On the right hand side, we have the Laplace transform of a measure having the Lebesgue density 
$$  u \mapsto \int_u^\infty [ f(v) M(v) -  \frac{ 1 - M(v) }{\tau}  ]  \nu_\infty (v) \d v  .$$ 
So we may deduce from this that 
$$ \nu_\infty ( u) ( b(u) + \alpha ) M(u) = \int_u^\infty [ f(v) M(v) -  \frac{ 1 - M(v) }{\tau}  ]  \nu_\infty (v) \d v .$$
Let us write for short
\[ G(u) := \nu_\infty(u) (b(u) + \alpha)   = \gamma \exp \left( -\int_0^u \frac{f(y)}{b(y)+ \alpha } \d y \right),    \]
where $ \gamma = \gamma ( \alpha ) $ is the normalizing factor. 

Then
\[ G(u) M(u) = \int_u^\infty [ f(v) M(v) -  \frac{ 1 - M(v) }{\tau}  ]  \nu_\infty (v) \d v,  \]
and deriving with respect to $ u$ gives 
\begin{align*}
-\frac{f(u)}{b(u)+ \alpha } G(u) M(u) + G(u) M'(u) = - \left( f(u) M(u) - \frac{1-M(u)}{\tau} \right) \frac{G(u)}{b(u) + \alpha },
\end{align*}
such that
\[ M'(u) = \frac{1-M(u)}{\tau (b(u) + \alpha ) }. \]
The solutions of this ODE are of the form
\[ { M(v) = 1- C \exp\left(- \int_0^v \frac{1}{\tau (b(u) + \alpha) } \d u\right)  ,} \]
where $C$ is a constant to determine. To determine the constant, %
we use that by~\eqref{eq:ufm}, 
$$  U \int_0^\infty  f(v) M(v) \nu_\infty (v) \d v = \int_0^\infty  \nu_\infty ( v)  \frac{ 1 - M(v) }{\tau} \d v .$$
So
\begin{multline*}
      U \int_0^\infty  f(v) \left[1- C \exp\left(- \int_0^v \frac{1}{\tau (b(u) + \alpha ) } \d u\right) \right] \nu_\infty (v) \d v \\
      = \int_0^\infty  \nu_\infty ( v)  \frac{ C \exp\left(- \int_0^v \frac{1}{\tau (b(u) + \alpha) } \d u\right) }{\tau} \d v , 
\end{multline*}    
that is, 
\[  U \gamma(\alpha) = C \int_0^\infty  \exp\left(- \int_0^v \frac{1}{\tau (b(u) + \alpha) } \d u\right) (U f(v) + \frac{1}{\tau}) \nu_\infty(v) \d v . \]
As 
\[ \nu^\infty(v) = \frac{\gamma(\alpha)}{b(v)+ \alpha } \exp\left( - \int_0^v \frac{f(u)}{b(u)+ \alpha } \d  u \right), \]
we deduce, using the change of variables $ v = \varphi^\alpha_t (0), $
\[ {U = C - C(1-U) \int_0^\infty f(\varphi^\alpha_t(0)) \exp \left(-\int_0^t f(\varphi^\alpha_u(0)) \d u \right) e^{-\frac{t}{\tau}} \d t.}  \]
This last formula gives the formula of $C$.  
\end{proof}

\begin{remark}
In case $b(v) = \bar V - v ,  $ $\bar V \geq 0,$ 
we have
\[ { M(v) = 1 -C \left(1- \frac{v}{\bar V + \alpha } \right)^{1/\tau} }. \]
If $b(x) = b_0$, $b_0 \geq 0, $ we have
\[ {M(v) = 1 - C e^{-\frac{v}{\tau (b_0 + \alpha) } }}. \]
\end{remark}

\subsection{Revisiting \texorpdfstring{$\Theta_\alpha$}{Theta-alpha} in terms of Volterra integral equations}
Building upon the results obtained in Subsection~\ref{sec:32} in the time inhomogeneous case, in this section we study further the expressions appearing in the definition of $ \Theta_\alpha $ in~\eqref{eq:theta}. 
Recall (\ref{eq:Kt}) and define for all $x \in [0, 1],$ 
\[ R_t(x) : = \E_{(0, x)} X^\alpha_t f(V^\alpha_t) \mbox{ and } A_t(x) := K(t) \psi_t(x) , \]
where $K( t) = K^0 ( t) .$

With these notations, we have the following result. 

\begin{lemma}
We have the representation 
	\[ R_t(x) = R^1_t + R^2_t x, \]
	where the functions $R^1_t, R^2_t$ solve
\begin{align}\label{eq:R}
	R^1_t &= A^1_t + \int_0^t R^1_{t-s} K(s) \d s + \int_0^t R^2_{t-s} B^1_s K(s) \d s , \nonumber \\
	R^2_t &= A^2_t + \int_0^t R^2_{t-s} B^2_s K(s) \d s,
\end{align}
and where $A^1_t := K(t) (1- e^{-t/\tau})$ and $  A^2_t := K(t) e^{-t / \tau}.$
\end{lemma}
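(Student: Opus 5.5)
This is the time-homogeneous specialization of Lemma~\ref{lem:33}, with $a_t\equiv\alpha$ and $B^1_s=(1-U)(1-e^{-s/\tau})$, $B^2_s=(1-U)e^{-s/\tau}$ as there. The plan is to derive the renewal equation for $R_t(x)$ from the first-jump decomposition, insert an affine ansatz in $x$, and read off the system \eqref{eq:R}. Uniqueness then forces $R_t(x)$ to be of this form.

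\emph{First-jump decomposition.} I will apply Lemma~\ref{lem:geng} with $s=0$, $\mu=\delta_{(0,x)}$ and $g(v,x)=xf(v)$. The function $g$ is bounded and measurable because $f$ is bounded and $x\in[0,1]$. For constant input $\alpha$ we have $K^0_\alpha(0,u)=K(u)$. By time homogeneity,
\[
\E\bigl[g(V^{\alpha,\delta_{(0,y)}}_{u,t},X^{\alpha,\delta_{(0,y)}}_{u,t})\bigr]=R_{t-u}(y), \qquad y=(1-U)\psi_u(x).
\]
The no-jump term gives $\psi_t(x)f(\varphi^\alpha_t(0))H^0(t)=K(t)\psi_t(x)=A_t(x)$. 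This yields the renewal equation
\[
R_t(x)=A_t(x)+\int_0^t K(s)\,R_{t-s}\bigl((1-U)\psi_s(x)\bigr)\,\d s,
\]
which is \eqref{eq:rconvol} in the homogeneous case.

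\emph{Affine ansatz.} From $\psi_t(x)=1+(x-1)e^{-t/\tau}$ we get $A_t(x)=A^1_t+A^2_t x$ and $(1-U)\psi_s(x)=B^1_s+B^2_s x$. Suppose $R_t(x)=R^1_t+R^2_t x$. Substituting,
\[
R_{t-s}\bigl((1-U)\psi_s(x)\bigr)=R^1_{t-s}+R^2_{t-s}B^1_s+R^2_{t-s}B^2_s\,x.
\]
Matching the constant term and the coefficient of $x$ gives exactly \eqref{eq:R}.

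\emph{Existence and uniqueness.} The second equation of \eqref{eq:R} is a linear Volterra equation of convolution type for $R^2$. Its kernel $B^2_sK(s)$ is bounded by $\|f\|_\infty$, so the Neumann series converges on every $[0,T]$, giving a unique bounded solution. With $R^2$ known, the first equation is a Volterra equation for $R^1$ with kernel $K$ and known forcing $A^1_t+\int_0^t R^2_{t-s}B^1_sK(s)\,\d s$; it is solved the same way. The affine function $\tilde R_t(x):=R^1_t+R^2_t x$ then solves the renewal equation.

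The one point needing care, and the only non-routine step, is to show that $R_t(x)$ itself equals $\tilde R_t(x)$. The renewal equation has arguments $(1-U)\psi_s(x)$ that vary with $s$, so it is not a scalar Volterra equation. I would set $D_t:=\sup_{x\in[0,1]}|R_t(x)-\tilde R_t(x)|$. This is finite, since both functions are bounded on $[0,T]\times[0,1]$. Because $(1-U)\psi_s(x)\in[0,1]$, taking differences and suprema gives
\[
D_t\le \|f\|_\infty\int_0^t D_{t-s}\,\d s .
\]
Gronwall's lemma then yields $D\equiv0$.
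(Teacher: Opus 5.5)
Your proposal is correct and follows the paper's route. The paper likewise obtains the renewal equation $R_t(x)=A_t(x)+\int_0^t R_{t-s}((1-U)\psi_s(x))K(s)\,\d s$ by conditioning on the first jump (as in Lemma~\ref{lem:33}), splits $A_t(x)$ and $(1-U)\psi_s(x)$ affinely in $x$, and solves the triangular Volterra system by Neumann series, first for $R^2$ and then for $R^1$. The only difference is that you spell out the uniqueness step (the sup-over-$x$ Gronwall argument using $(1-U)\psi_s(x)\in[0,1]$ and $K\le\|f\|_\infty$), which the paper simply attributes to ``classical arguments''.
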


\begin{proof}
It follows analogously to Lemma~\ref{lem:33} that $(R_t(x))$ is the unique solution of the integral equation
	\begin{equation}\label{eq:integraleq1} R_t(x) = A_t (x) + \int_0^t{ R_{t-s}( (1-U) \psi_s(x)) K(s) \d s}. \end{equation} 
where  
\[ A_t(x) = A^1_t + A^2_t x . \]
The proof then follows analogously to the proof of Lemma~\ref{lem:33}. 
\end{proof}
In what follows we denote by $\widehat{K}(z)$ the Laplace transform of $K(t)$ given by
\[ \widehat{K}(z) := \int_0^\infty e^{-zt} K(t) \d t. \]

\begin{lemma}
	The Laplace transforms of $R^1$ and $R^2$ are given by
	\begin{align*}
		\widehat{R^1} (z) &= \frac{\widehat{K}(z) - \widehat{K}(z+1/\tau)}{[1-\widehat{K}(z)][1 - (1-U) \widehat{K}(z+1/\tau)]}, \\
		\widehat{R^2} (z) &= \frac{ \widehat{K}(z+1/\tau)}{ 1 - (1-U) \widehat{K}(z+1/\tau)}.
	\end{align*}
\end{lemma}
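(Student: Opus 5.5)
Take Laplace transforms in the system \eqref{eq:R} and solve the resulting linear equations, second equation first since it is closed in $R^2$.

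\emph{Step 1: existence of the transforms.} Since $\|f\|_\infty<\infty$, we have $0\le K(t)\le \|f\|_\infty$ and $\int_0^\infty K(t)\,\d t\le 1$. Also $0\le B^1_s, B^2_s\le 1-U$ and $|A^k_t|\le K(t)$. Iterating \eqref{eq:R} via the Neumann series gives $|R^k_t|\le C e^{ct}$. Hence all Laplace transforms are defined for $\Re(z)$ large, and the identities extend afterwards by analytic continuation wherever the right-hand sides are analytic.

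\emph{Step 2: transforms of the data.} Each convolution term $\int_0^t R^k_{t-s}\, h(s)\,\d s$ transforms to $\widehat{R^k}(z)\,\widehat h(z)$. Write $\widehat K(z)=\int_0^\infty e^{-zt}K(t)\,\d t$. Using $B^2_s=(1-U)e^{-s/\tau}$ and $B^1_s=(1-U)(1-e^{-s/\tau})$, the shift rule $\widehat{e^{-\cdot/\tau}K}(z)=\widehat K(z+1/\tau)$ gives
\begin{align*}
\widehat{A^2}(z)&=\widehat K(z+1/\tau), & \widehat{A^1}(z)&=\widehat K(z)-\widehat K(z+1/\tau),\\
\widehat{B^2K}(z)&=(1-U)\widehat K(z+1/\tau), & \widehat{B^1K}(z)&=(1-U)\bigl(\widehat K(z)-\widehat K(z+1/\tau)\bigr).
\end{align*}

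\emph{Step 3: solve for $\widehat{R^2}$.} The second equation of \eqref{eq:R} becomes
\[
\widehat{R^2}(z)=\widehat K(z+1/\tau)+(1-U)\widehat K(z+1/\tau)\,\widehat{R^2}(z).
\]
For $\Re(z)>-1/\tau$ we have $|(1-U)\widehat K(z+1/\tau)|\le (1-U)<1$, so the denominator does not vanish. Dividing by $1-(1-U)\widehat K(z+1/\tau)$ yields the claimed formula for $\widehat{R^2}$.

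\emph{Step 4: solve for $\widehat{R^1}$.} The first equation becomes
\[
\widehat{R^1}\bigl(1-\widehat K(z)\bigr)=\bigl(\widehat K(z)-\widehat K(z+1/\tau)\bigr)\bigl(1+(1-U)\widehat{R^2}(z)\bigr).
\]
Substituting Step 3,
\[
1+(1-U)\widehat{R^2}(z)=\frac{1}{1-(1-U)\widehat K(z+1/\tau)}.
\]
Dividing by $1-\widehat K(z)$ then gives the stated expression for $\widehat{R^1}$.

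\emph{Main obstacle.} The algebra itself is routine. The delicate point is the domain of validity of the formula for $\widehat{R^1}$. The factor $1-\widehat K(z)$ vanishes at $z=0$, because $\int_0^\infty K=1$ under Assumption~\ref{ass:minf}. Correspondingly, $R^1_t$ does not decay: it converges to a stationary value, which produces a simple pole at $0$. So the identity should first be stated for $\Re(z)>0$, where $|\widehat K(z)|<1$ and all transforms converge absolutely. It can then be extended meromorphically if needed.
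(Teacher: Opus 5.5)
Your proposal is correct and follows the same route as the paper. You Laplace-transform the Volterra system for $R^1,R^2$, use $B^2_sK(s)=(1-U)A^2_s$ and $B^1_sK(s)=(1-U)A^1_s$ together with the shift rule, solve the closed equation for $\widehat{R^2}$ first, and substitute it into the equation for $\widehat{R^1}$. Your domain discussion, including the pole at $z=0$ from $1-\widehat K(0)=0$, goes beyond the paper's terse argument. One correction there: absolute convergence of $\widehat{R^1},\widehat{R^2}$ on $\Re(z)>0$ follows from the bound $0\le R_t(x)\le\|f\|_\infty$ (since $X^\alpha_t\in[0,1]$, this gives $|R^1_t|,|R^2_t|\le\|f\|_\infty$), not from the exponential Neumann-series bound of your Step~1.
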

\begin{proof}
	First note that $B^2_s K(s) = (1-U) A^2_s$ and that $\widehat{A^2}(z) = \widehat{K}(z + 1/\tau)$. The equality for $\widehat{R^2}$ follows from the Volterra integral equation. Similarly, $B^1_s K(s) = (1-U) A^1_s$ and $\widehat{A^1}(z) = \widehat{K}(z) - \widehat{K}(z + 1/\tau)$. This implies the first equality. 
\end{proof}

\begin{remark}\label{rem:315}
{\bf 1)} For \[ S_t(v, x):= \E_{(v,x)} X^\alpha_t f(V^\alpha_t) \]
we have the representation
	\[ S_t(v, x) = K^v(t) \psi_t(x) + \int_0^t{ R_{t-s}((1-U) \psi_s(x)) K^v(s) \d s },  \]
	such that
	\[ S_t(v, x) = S^1_v(t) + S^2_v(t) x, \]
	with
	\begin{align*}
		S^1_v(t) &:= (1-e^{-t/\tau}) K^v(t) + \int_0^t K^v(s) R^1_{t-s} \d s + (1-U) \int_0^t K^v(s) (1-e^{-s/\tau}) R^2_{t-s} \d s, \\
		S^2_v(t) &:= e^{-t / \tau } K^v(t) + (1-U) \int_0^t K^v(s) e^{-s / \tau} R^2_{t-s} \d s. 
	\end{align*}
{\bf 2)} It follows from the explicit form of $ K^v $ and the fact that $ f'$ and $b'$ are Lipschitz continuous that $ \frac{\d }{\d v} S_t ( v, x) $ is Lipschitz continuous and bounded. \\
{\bf 3)}
 The Laplace transforms of $S^1_v$ and $S^2_v$ are given by
	\begin{align*}
		\widehat{S^1_v}(z) &= \widehat{K^v}(z) - \widehat{K^v}(z + 1/\tau) + \widehat{K^v}(z) \widehat{R^1}(z) + (1-U) [\widehat{K^v}(z) - \widehat{K^v}(z+1/\tau)] \widehat{R^2}(z), \\
		\widehat{S^2_v}(z) &= \widehat{K^v}(z+1/\tau) + (1-U) \widehat{K^v}(z+ 1/\tau) \widehat{R^2}(z).
	\end{align*}
\end{remark}

\subsection{Evaluating \texorpdfstring{$\widehat{\Theta}_\alpha(z)$}{the Laplace transform of Theta}}
Our goal is now to explicit the value of $\widehat{\Theta}_\alpha(z)$ in terms of the Laplace transforms of two more elementary functions. We consider
\begin{align*}\Xi_1(t) &:= J \int_{\R} \left[ \frac{\d }{\d v} K^v(t) \right] \nu_\infty(\d v). \\
	\Xi_2(t) &:= J \int_{\R } \left[ \frac{\d }{\d v}  K^v(t) \right] M(v) \nu_\infty(\d v).
\end{align*}
\begin{lemma}
	\label{lem:explicit_value_LTheta}
	We have
	\begin{align*}
		J \int \frac{\d }{\d v} S^v_1(z) \nu_\infty(\d v) &= \frac{\widehat{\Xi_1}(z) - \widehat{\Xi_1}(z + 1/\tau)}{1-(1-U)\widehat{K}(z+1/\tau)} + \frac{\widehat{\Xi_1}(z)( \widehat{K}(z) - \widehat{K}(z+1/\tau))}{(1-\widehat{K}(z))(1 - (1-U) \widehat{K}(z+1/\tau)) } ,\\
		J \int_0^\infty \left[ \frac{\d }{\d v} S^2_t(v) \right] \frac{\nu_\infty(\d v)}{1+ \tau U f(v)} &= \frac{\widehat{\Xi_2}(z+1/\tau)}{1-(1-U) \widehat{K}(z+1/\tau)}.
	\end{align*}
		Therefore,
		\[ \widehat{\Theta}_\alpha (z)  = \frac{1}{1-(1-U)\widehat{K}(z+\tfrac{1}{\tau})} \left\{ \widehat{\Xi_1}(z) - \widehat{\Xi_1}(z + \tfrac{1}{\tau}) + \widehat{\Xi_2}(z+\tfrac{1}{\tau}) +  \frac{\widehat{\Xi_1}(z)( \widehat{K}(z) - \widehat{K}(z+\tfrac{1}{\tau}))}{1-\widehat{K}(z)}  \right\}. \]
\end{lemma}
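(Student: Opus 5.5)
The plan is to start from the decomposition of Remark~\ref{rem:315}, $S_t(v,x)=S^1_v(t)+S^2_v(t)\,x$. Since $\mu_\infty(\d v,\d x)=\nu_\infty(v)\mu_\infty(v,\d x)\d v$ and $M(v)=\int x\,\mu_\infty(v,\d x)$, integrating first in $x$ gives
\[ \Theta_\alpha(t)=J\int \tfrac{\d}{\d v}S^1_v(t)\,\nu_\infty(\d v)+J\int \tfrac{\d}{\d v}S^2_v(t)\,M(v)\,\nu_\infty(\d v). \]
Here I use that $\partial_v S_t(v,x)$ is affine in $x$, so only the conditional mean $M$ of Proposition~\ref{prop:invm} enters. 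I would then take Laplace transforms. Differentiation in $v$ and integration against $\nu_\infty$ commute with the Laplace transform by dominated convergence: by item 2) of Remark~\ref{rem:315}, $\partial_v S$ is bounded, and the exponential decay bound established before Theorem~\ref{theo:2main} holds on $D_\alpha$.

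\textbf{First term.} Differentiate item 3) of Remark~\ref{rem:315} in $v$. Only $\widehat{K^v}$ depends on $v$, while $\widehat{R^1}$ and $\widehat{R^2}$ do not. Integrating against $J\nu_\infty$ replaces $\widehat{K^v}(w)$ by $\widehat{\Xi_1}(w)$, so the first term equals
\[ [\widehat{\Xi_1}(z)-\widehat{\Xi_1}(z+\tfrac1\tau)]\,[1+(1-U)\widehat{R^2}(z)]+\widehat{\Xi_1}(z)\widehat{R^1}(z). \]
The explicit formula $1+(1-U)\widehat{R^2}(z)=[1-(1-U)\widehat K(z+1/\tau)]^{-1}$ and the formula for $\widehat{R^1}$ then give the first identity directly.

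\textbf{Second term.} Again by Remark~\ref{rem:315}, $\widehat{S^2_v}(z)=\widehat{K^v}(z+1/\tau)\,[1-(1-U)\widehat K(z+1/\tau)]^{-1}$. Differentiating and integrating against $J M\nu_\infty$ gives $\widehat{\Xi_2}(z+1/\tau)/[1-(1-U)\widehat K(z+1/\tau)]$.

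The weight $(1+\tau Uf(v))^{-1}$ written in the statement looks inconsistent with $\Xi_2$ as defined (which carries $M$). I would reconcile it by reading the intended weight as $M(v)\nu_\infty(\d v)$, which is what $\Theta_\alpha$ requires. If instead the paper means an identity relating the two weights through the explicit form of $M$, I would check it using $M'=(1-M)/(\tau(b+\alpha))$. Either way, the key formula for $\widehat\Theta_\alpha$ only needs the $M$-weighted version. Adding the two contributions and factoring out $[1-(1-U)\widehat K(z+1/\tau)]^{-1}$ yields the stated expression for $\widehat\Theta_\alpha(z)$.

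\textbf{Main obstacle.} The hard step is justifying the interchanges: exchanging $\tfrac{\d}{\d v}$ with the convolution integrals defining $S^i_v$, and with the Laplace transform. The points where $1-\widehat K(z)$ vanishes (notably $z=0$) also need care. For $\Re z>0$ we have $|\widehat K(z)|<1$, so all formulas hold there, and they extend to $D_\alpha$ by analytic continuation of both sides. For the derivative interchange, I would use that $\partial_v K^v(s)$ is bounded by $Ce^{-f_*s/2}$, uniformly in $v$ on the support $[0,\sigma_\alpha)$. This follows from the Lipschitz flow assumption together with Lipschitz continuity of $f'$ and $b'$, and it allows differentiation under the integral sign.
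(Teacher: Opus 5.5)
Your proposal is correct and follows the paper's own argument: split $\Theta_\alpha$ via $S_t(v,x)=S^1_v(t)+S^2_v(t)\,x$ and the conditional mean $M$ of Proposition~\ref{prop:invm}, then differentiate in $v$ the Laplace transforms of Remark~\ref{rem:315}, using $1+(1-U)\widehat{R^2}(z)=[1-(1-U)\widehat K(z+1/\tau)]^{-1}$ and the explicit $\widehat{R^1}$. You are also right that the second identity must carry the weight $M(v)\,\nu_\infty(\d v)$, as in the definition of $\Xi_2$ and the paper's decomposition of $\Theta_\alpha$; the weight $(1+\tau U f(v))^{-1}$ differs from $M(v)$ and gives a different quantity in general, so your fallback of reconciling the two weights via $M'=(1-M)/(\tau(b+\alpha))$ would not work and should be dropped.
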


\subsection{Proof of Theorem~\ref{theo:2main}}
We start with some preliminary considerations. 
We recall that we start from some invariant probability measure $\mu_\infty(\d v, \d x)$ of the McKean-Vlasov equation~\eqref{eq:VXNL}.
Putting 
\[ \alpha = J \int_{\R  \times [0,1]} x f(v) \mu_\infty(\d v, \d x),  \]
$\mu_\infty = \mu_\infty^\alpha$ is also the unique invariant probability measure of the linear process $(V^\alpha_t, X^\alpha_t)$.
Consider $\nu_\infty(\d v) = \mu_\infty(\d v, [0, 1])$ the first marginal of $\mu_\infty$. We recall that this first marginal is known explicitly.
Then we can rewrite, with the above expressions, 
\begin{align*} \Theta_\alpha(t) &:= J \int_{\R  \times [0,1]} \frac{\d }{\d v} \E_{(v,x)}[X^{\alpha}_t f(V^\alpha_t)] \mu_\infty(\d v, \d x) \\
&=  J \int_{\R  \times [0,1]} \frac{\d }{\d v} S_t(v, x) \mu_\infty(\d v, \d x) \\
&= J \int_\R \frac{\d }{\d v} S^1_t(v) \nu_\infty (\d v) + J \int_\R \left[ \frac{\d }{\d v} S^2_t(v) \right] M(v) \nu_\infty(\d v).
\end{align*}
Here we have used Proposition~\ref{prop:invm} to obtain the last identity.

We now turn to the proof of Theorem~\ref{theo:2main}. We first show the following perturbation result which is a Trotter-Kato type formula. It relates the difference of two semigroups to the one of the associated generators. 

\begin{proposition}\label{prop:TK}
Let $ g \in C^1 ( \R \times [0, 1 ]; \R )  $ be bounded and $ k \in C( \R_+; \R) . $ Suppose that $ \cL ((V_0^{ \alpha + k }, X_0^{ \alpha + k })  ) =  \cL ((V_0^{ \alpha  }, X_0^{ \alpha  })  ) = \mu .$ Then we have that 
\begin{multline}
   \E  g (V_t^{ \alpha + k, \mu }, X_t^{ \alpha + k , \mu }) -  \E  g (V_t^{ \alpha , \mu  }, X_t^{ \alpha , \mu  }) \\
   =\int_0^t \int_\R \left[ \frac{\d }{\d v } \E_{(v, x) } g( (V^\alpha_{ t-s } , X^\alpha_{t-s} ))\right] k_s \cL  (V_s^{ \alpha + k, \mu  } , X_s^{ \alpha + k }, \mu  ) (\d v, \d x) ds .
\end{multline}
\end{proposition}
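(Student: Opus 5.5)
The plan is the classical Duhamel (variation-of-constants) interpolation between the two semigroups. Fix $t>0$ and introduce, for $0\le s\le t$,
\[ \Phi(s) := \E\, u_{t-s}\big(V_s^{\alpha+k,\mu}, X_s^{\alpha+k,\mu}\big), \qquad u_r(v,x) := \E_{(v,x)} g(V^\alpha_r, X^\alpha_r) = P^\alpha_r g(v,x). \]
Then $\Phi(t) = \E g(V_t^{\alpha+k,\mu},X_t^{\alpha+k,\mu})$ and $\Phi(0) = \int u_t \,\d\mu = \E g(V_t^{\alpha,\mu},X_t^{\alpha,\mu})$. The identity is therefore $\Phi(t)-\Phi(0)=\int_0^t \Phi'(s)\,\d s$, once $\Phi'(s)$ is identified with the integrand.

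\textbf{Regularity of $u$.} Since $g\in C^1$ and is bounded, the arguments after Lemma~\ref{lem:geng} show that $(v,x)\mapsto u_r(v,x)$ is differentiable and Lipschitz, with constants uniform for $r\in[0,t]$. The Volterra representation of Lemma~\ref{lem:geng} (with $a\equiv\alpha$) expresses $u_r$ through $H^v$, $K^v$, $\varphi^\alpha$ and $\psi$, all of which are $C^1$ in $(r,v,x)$. This gives enough joint regularity to differentiate $r\mapsto u_r$ and to justify the backward Kolmogorov equation
\[ \partial_r u_r = A^\alpha u_r, \qquad A^\alpha h(v,x) = (b(v)+\alpha)\partial_v h + \frac{1-x}{\tau}\partial_x h + f(v)\big[h(0,(1-U)x)-h(v,x)\big], \]
at least in integrated (mild) form.

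\textbf{Itô step.} Apply Itô's formula for jump processes to $s\mapsto u_{t-s}(V_s^{\alpha+k,\mu},X_s^{\alpha+k,\mu})$. The generator of the perturbed, time-inhomogeneous process at time $s$ is $A^{\alpha+k_s} = A^\alpha + k_s\partial_v$. This gives
\[ \d\, u_{t-s}(V_s,X_s) = \big(-\partial_r u_{t-s} + A^\alpha u_{t-s} + k_s \partial_v u_{t-s}\big)(V_s,X_s)\,\d s + \d M_s, \]
where $M$ is the compensated Poisson martingale. It is a true martingale because $u$ is bounded and $f$ is bounded. By the Kolmogorov equation the first two terms cancel. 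Taking expectations and integrating from $0$ to $t$ then yields exactly
\[ \int_0^t \int k_s\, \partial_v \E_{(v,x)} g(V^\alpha_{t-s},X^\alpha_{t-s})\, \cL(V_s^{\alpha+k,\mu},X_s^{\alpha+k,\mu})(\d v,\d x)\,\d s, \]
which is the claimed formula.

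\textbf{Main obstacle.} The delicate part is justifying the time derivative and the Kolmogorov equation for $u_r$, since $g$ is only $C^1$ and $u_r$ is not known to be $C^{1,1}$ a priori. If full differentiability in $r$ is hard to get, I would avoid it by discretising instead. Take a partition $0=s_0<\dots<s_n=t$ and telescope
\[ \Phi(t)-\Phi(0)=\sum_i\big[\Phi(s_{i+1})-\Phi(s_i)\big]. \]
On each increment, use the Markov property of the inhomogeneous process together with the semigroup property $u_{t-s_i}=P^\alpha_{s_{i+1}-s_i}u_{t-s_{i+1}}$. The increment then becomes the difference between one step of the perturbed and one step of the unperturbed evolution applied to $u_{t-s_{i+1}}$. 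Lemma~\ref{lem:geng}, or a first-order comparison of the two flows $\varphi^{\alpha+k}$ and $\varphi^\alpha$ on a short interval (jump parts coincide to first order), shows this equals
\[ \int_{s_i}^{s_{i+1}} k_r\,\d r\;\E\,\partial_v u_{t-s_{i+1}}(V_{s_i},X_{s_i}) + o(s_{i+1}-s_i), \]
uniformly in $i$. Letting the mesh go to zero and using continuity of $s\mapsto\cL(V_s^{\alpha+k,\mu},X_s^{\alpha+k,\mu})$ together with the uniform continuity of $\partial_v u_r$ in $(r,v,x)$ gives the Riemann sum limit.
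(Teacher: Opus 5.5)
Your proposal is correct and follows the paper's route. The paper also sets $\Phi(s,z)=\E_z g(V^\alpha_{t-s},X^\alpha_{t-s})$, uses $\partial_s\Phi=-A^\alpha\Phi$ and $(A^{\alpha+k}_s-A^\alpha)h=k_s\,\partial_v h$, and applies It\^o's formula to $\Phi(s,(V_s^{\alpha+k},X_s^{\alpha+k}))$ before letting $s\to t$. The paper simply asserts the regularity $\Phi\in C^1_b$ that you flag as the main obstacle (by appeal to Section~\ref{sec:32}), so your discretisation fallback is an extra safeguard rather than a different argument.
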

\begin{proof}
The proof is along the lines of the proof of Prop. 3.13 of \cite{MNA}, and so we only sketch the main arguments. In what follows, let us write $ z := ( v, x) .$ Since we work with a fixed initial distribution $\mu, $ to ease the reading, we shall drop the superscript $ \mu $ when denoting our process. We put for any $ 0 \le s \le t, $ $ \Phi (s, z) := \E_z (g( V_{t-s}^{ \alpha  }, X_{t-s}^{ \alpha  }))  .  $ Using the arguments of Section~\ref{sec:32}, it follows that $ \Phi \in C^1_b ( [ 0, t ] \times \R \times [0, 1 ]; \R ).$ So,
$$ \frac{\partial}{\partial s} \Phi (s, z) = - A^\alpha \phi ( s, z) , $$
where 
$$ A^\alpha g ( z) = \frac{ \partial g}{\partial v} (z) [ b( v ) + \alpha ] + \frac{ \partial g}{\partial x} (z) \frac{ 1 -x }{\tau} + f ( v) [ g ( 0, (1- U) x) - g( v, x) ] $$
is the generator of $ (V^\alpha, X^\alpha ). $ Notice that the time dependent generator $ A^{ \alpha + k}_s $ of $(V^{ \alpha + k }, X^{ \alpha + k } )$ satisfies 
$$(A^{ \alpha + k}_s - A^{ \alpha } ) g ( z) = \frac{ \partial g}{\partial v} (z) k_s.  $$
Hence, applying first Ito's formula and then replacing $ \Phi ( u, z) $ by its definition,  
\begin{multline*} \E \Phi ( s, (V_s^{ \alpha + k }, X_s^{ \alpha + k } ) ) = \E \Phi ( 0, (V_0^{ \alpha + k }, X_0^{ \alpha + k } ) )  + \E \int_0^s \frac{\partial }{ \partial v} \Phi ( u, z)_{ | z=(V_u^{ \alpha + k }, X_u^{ \alpha + k } )} k_s \d s \\
= \E \Phi ( 0, (V_0^{ \alpha + k }, X_0^{ \alpha + k } ) )  +  \int_0^s  \int_{\R \times [0, 1  ]} \left[ \frac{\partial }{ \partial v} \E_{(v, x) } g( V_{t-u}^\alpha, X_{t-u}^\alpha)\right]  k_u \cL  (V_u^{ \alpha + k } , X_u^{ \alpha + k } ) (\d v, \d x).
\end{multline*}
Letting $ s \to t$ and observing that $ \Phi( t, z ) = g(z) = g ( v, x) , $ the assertion then follows. 
\end{proof}

Together with Theorem~\ref{prop:erglin}, we deduce from the above the following control.
\begin{lemma}\label{lem:318}
We have that
\begin{equation}\label{eq:323}
d_{BL}( \cL   (V_t^{ \alpha + k, \mu }, X_t^{ \alpha + k, \mu }), \cL (V_t^{ \alpha , \mu }, X_t^{ \alpha , \mu  })) \le C_* \int_0^t e^{ - \lambda_* ( t-s) } | k_s | \d s .   
\end{equation}
\end{lemma}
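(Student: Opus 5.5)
We apply Proposition~\ref{prop:TK} to a suitable test function $g$ and control the derivative appearing in the integrand with Theorem~\ref{prop:erglin}.

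Fix $g\in Lip_1$ with $\|g\|_\infty\le 1$. Proposition~\ref{prop:TK} requires $g\in C^1$, so we first take $g$ in $C^1(\R\times[0,1];\R)$ with $\|\partial_v g\|_\infty+\|\partial_x g\|_\infty\le 1$ and $\|g\|_\infty\le 1$. A standard mollification argument (convolve in $(v,x)$, after extending $g$ in $x$ by constants outside $[0,1]$) shows that the supremum defining $d_{BL}$ is unchanged if it is restricted to such smooth functions. The smoothed functions converge uniformly to $g$, and their Lipschitz and sup norms do not increase.

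For such $g$, Proposition~\ref{prop:TK} gives
\[
\E g(V_t^{\alpha+k,\mu},X_t^{\alpha+k,\mu})-\E g(V_t^{\alpha,\mu},X_t^{\alpha,\mu})=\int_0^t\int \Big[\tfrac{\d}{\d v}\E_{(v,x)} g(V^\alpha_{t-s},X^\alpha_{t-s})\Big] k_s\,\cL(V_s^{\alpha+k,\mu},X_s^{\alpha+k,\mu})(\d v,\d x)\,\d s .
\]
The key step is the uniform bound
\[
\Big|\tfrac{\d}{\d v}\E_{(v,x)} g(V^\alpha_{r},X^\alpha_{r})\Big|\le C_*e^{-\lambda_* r},
\]
obtained exactly as in the discussion after \eqref{eq:theta}. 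The derivative exists by the regularity established in Section~\ref{sec:32} for $C^1$ test functions $g$. For $v\neq\tilde v$, Theorem~\ref{prop:erglin} applied to $\delta_{(v,x)}$ and $\delta_{(\tilde v,x)}$ gives
\[
\big|\E_{(v,x)}g(V^\alpha_r,X^\alpha_r)-\E_{(\tilde v,x)}g(V^\alpha_r,X^\alpha_r)\big|\le C_*e^{-\lambda_* r}\,d_{BL}(\delta_{(v,x)},\delta_{(\tilde v,x)})\le C_*e^{-\lambda_* r}|v-\tilde v| .
\]
Dividing by $|v-\tilde v|$ and letting $\tilde v\to v$ yields the bound, uniformly in $(v,x)$.

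We plug this bound into the identity above. Since $\cL(V_s^{\alpha+k,\mu},X_s^{\alpha+k,\mu})$ is a probability measure, the right-hand side is bounded in absolute value by $C_*\int_0^t e^{-\lambda_*(t-s)}|k_s|\,\d s$. Taking the supremum over the admissible smooth $g$, and using the density argument above, gives \eqref{eq:323}.

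The only delicate point is this reduction from $Lip_1$ to $C^1$ test functions together with differentiability in $v$. Differentiability is used only to identify the integrand, and the exponential bound itself comes directly from Theorem~\ref{prop:erglin}. If the smoothing creates difficulties near the boundary $x\in\{0,1\}$, we would instead mollify only in $v$. The $x$-derivative plays no role in the generator difference $(A_s^{\alpha+k}-A^\alpha)g=k_s\,\partial_v g$, but the Itô argument in Proposition~\ref{prop:TK} does need $\Phi$ to be $C^1$ in $x$, so this variant requires checking that point.
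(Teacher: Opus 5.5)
Your proposal is correct and follows the same route as the paper: the paper applies Proposition~\ref{prop:TK} to $C^1$ test functions, bounds the $v$-derivative of $\E_{(v,x)} g(V^\alpha_{t-s},X^\alpha_{t-s})$ by $C_* e^{-\lambda_*(t-s)}$ using Theorem~\ref{prop:erglin}, and concludes by density of bounded differentiable functions in the bounded $Lip_1$ class, with details deferred to Corollary 3.14 of \cite{MNA}. Your closing hedge is unnecessary, because the extension $(v,x)\mapsto g(v,\min(\max(x,0),1))$ is still $1$-Lipschitz and bounded by $1$, so mollifying in both variables already gives admissible $C^1$ test functions.
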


The proof of this result is straightforward, using that the class of differentiable and bounded functions is dense in the class of bounded $ Lip_1$ functions. Details are omitted and can be found in the proof of Corollary 3.14 in \cite{MNA}. 

\subsection*{Control of the non-linear interactions.} A main step of our proof is the study of the perturbation errors 
$$ \varphi_t^\mu := J \E ( f( V_t^{ \alpha, \mu } ) X_t^{ \alpha , \mu } ) - \alpha  $$
for the linearized, Markovian version of the process, 
and of
$$ k_t^\mu := J \E ( f( \bar V_t^\mu ) \bar X_t^\mu ) - \alpha ,  $$
for the true non-linear version of the process. Since $ ( \bar V^\mu, \bar X^\mu ) = ( V^{ \alpha + k^\mu, \mu }, X^{\alpha + k^\mu , \mu }), $ we have that 
$$ d_{BL} (\mu_t, \mu_\infty ) = d_{BL} ( \cL  ( V_t^{ \alpha + k^\mu, \mu  }, X_t^{\alpha + k^\mu }, \mu  ) ,\cL  ( V_t^{ \alpha , \mu  }, X_t^{\alpha , \mu } ) ) .$$
By (\ref{eq:323}) and the perturbation result of Proposition~\ref{prop:TK}, it is therefore crucial to control $ k^\mu_t.$ However, it difficult to deal directly with $ k^\mu_t,$ while we know how to deal with $ \varphi_t^\mu .$ These steps have been well established in previous work, see \cite{cormier2023stability}, and in what follows we directly adapt the arguments of \cite{MNA} to our present framework.

Analogously to Proposition 3.15 of \cite{MNA}, we have for all $ t \le T$ and for a constant $ C_T $ depending only on $T,$ 
\begin{equation}\label{eq:ktmu}
 | k_t^\mu  - \varphi_t^\mu - \int_0^t \Theta_\alpha (t-s) k_s^\mu \d s | \le C_T    d_{BL}( \mu, \mu_\infty )^2. 
\end{equation}
It is possible to resolve the above equation such that we only have to deal with the known object $ \varphi_t^\mu.$ This is done by introducing $ \Omega_\alpha ( t) ,$ the solution of the Volterra integral equation 
$$ \Omega_\alpha ( t) = \Theta_\alpha (t) + \int_0^t \Omega_\alpha ( t-s) \Theta_\alpha ( s) \d s , t \geq 0. $$
We have the following first result on $ \Omega_\alpha (t).$
\begin{lemma}
For all $ \lambda < \lambda'_\alpha , $ we have $ \sup_{ t \geq 0} | \Omega_\alpha ( t) | e^{\lambda t } < \infty . $    
\end{lemma}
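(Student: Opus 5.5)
This is a Paley--Wiener type statement for the resolvent kernel of the Volterra equation. We already know that $\sup_{t\ge0} e^{\lambda_\alpha t}|\Theta_\alpha(t)|<\infty$. Hence $\Theta_\alpha\in L^1(e^{\lambda t}\d t)$ for every $\lambda<\lambda_\alpha$, and $\hat\Theta_\alpha$ is analytic on $D_\alpha$. Since $\lambda'_\alpha\le\lambda_\alpha$, the plan is to fix $\lambda<\lambda'_\alpha$ and work with the exponentially weighted functions
\[
\theta(t):=e^{\lambda t}\Theta_\alpha(t),\qquad \omega(t):=e^{\lambda t}\Omega_\alpha(t).
\]
Multiplying the Volterra equation by $e^{\lambda t}$ gives $\omega=\theta+\omega*\theta$ on $\R_+$. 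Moreover $\theta\in L^1(\R_+)\cap L^\infty(\R_+)$, because $\lambda<\lambda_\alpha$.

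The first step is to show $\omega\in L^1(\R_+)$, and I would do this with the Paley--Wiener theorem for Volterra equations (Gripenberg--Londen--Staffans). The Laplace transform of $\theta$ is $\hat\theta(z)=\hat\Theta_\alpha(z-\lambda)$. For $\Re z\ge0$ we have $\Re(z-\lambda)\ge-\lambda>-\lambda'_\alpha$, so the hypothesis of Theorem~\ref{theo:2main} gives $\hat\theta(z)\neq1$ on the closed half-plane $\Re z\ge0$. Paley--Wiener then says that the resolvent $\omega$ of the $L^1$ kernel $\theta$ lies in $L^1(\R_+)$ exactly when $1-\hat\theta(z)\neq0$ for $\Re z\ge0$. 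So $\omega\in L^1$, and this holds for every $\lambda<\lambda'_\alpha$.

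The second step upgrades $L^1$ to $L^\infty$ using the equation itself:
\[
|\omega(t)|\le\|\theta\|_\infty+\|\omega\|_{L^1}\|\theta\|_\infty .
\]
This gives $\sup_t e^{\lambda t}|\Omega_\alpha(t)|<\infty$. Before this I would check that $\Omega_\alpha$ is locally bounded, so that the weighted equation makes sense. This is standard: $\Theta_\alpha$ is bounded, so the Neumann series converges locally.

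The main obstacle is the first step, and specifically invoking Paley--Wiener correctly. The condition must hold on the closed half-plane, including behaviour as $|z|\to\infty$. This is why the hypothesis is stated on the open half-plane $\Re z>-\lambda'_\alpha$ with a strict margin and we then take $\lambda<\lambda'_\alpha$: the closed half-plane $\Re z\ge0$ for $\theta$ then lies strictly inside the region where $\hat\Theta_\alpha\neq1$. As $|z|\to\infty$ no extra argument is needed, since $\hat\theta(z)\to0$ by Riemann--Lebesgue. If one prefers to avoid citing the theorem, the alternative is to use Wiener's lemma in the Banach algebra $L^1(\R_+)\oplus\C\delta_0$, whose maximal ideal space is the closed right half-plane together with the point at infinity. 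The nonvanishing of $1-\hat\theta$ there shows that $\delta_0-\theta$ is invertible, and its inverse is $\delta_0+\omega$.
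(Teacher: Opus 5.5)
Your proposal is correct and is essentially the paper's argument, which simply defers to Lemma 3.12 of \cite{MNA}: weight by $e^{\lambda t}$ so the kernel becomes integrable, apply Paley--Wiener using $\hat\Theta_\alpha(z-\lambda)\neq 1$ on $\Re z\ge 0$ to get $e^{\lambda t}\Omega_\alpha\in L^1(\R_+)$, and then read off the uniform bound from the Volterra equation. The paper's one-line sketch writes $\hat K(z)\neq 0$ for $z$ with positive real part; the condition actually needed, and the one you use, is $\hat K(z)\neq 1$ on the closed half-plane $\Re z\ge 0$.
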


\begin{proof}
The proof follows along the lines of the proof of Lemma 3.12 in \cite{MNA}, observing that  $K ( t) =  e^{ \lambda t } \Theta_\alpha  (t) $ is integrable and that $ \hat K ( z) \neq 0 $ for all $ z $ having positive real part.      
\end{proof}
Iterating the estimate (\ref{eq:ktmu}), we then obtain the following key estimate 
\begin{equation}\label{eq:ktmu2}
 | k_t^\mu  - \varphi_t^\mu - \int_0^t \Omega_\alpha (t-s) \varphi_s^\mu \d s | \le C_T    d_{BL}( \mu, \mu_\infty )^2 . 
\end{equation}

\begin{lemma}
For any $ \lambda \in ( 0, \lambda_\alpha ' ), $ there exists a constant $ C_\lambda $ such that for all $ T >0 $ there exists $C_T > 0 $ with the following property. For all $ \mu \in \cP ( \R \times [0, 1 ] ), $ for all $ 0 \le t \le T, $ 
$$ d_{BL} ( \cL  ( \bar V_t^{ \mu}, \bar X_t^{ \mu } ), \mu_\infty ) \le C_\lambda e^{ - \lambda t } d_{BL}  ( \mu, \mu_\infty ) + C_T d_{BL} ( \mu, \mu_\infty)^2. $$
\end{lemma}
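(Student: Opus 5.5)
The plan is to write the non-linear process as the linearized process driven by the input $\alpha + k^\mu$, and split the distance to $\mu_\infty$ with the triangle inequality:
\[
d_{BL}(\mu_t, \mu_\infty) \le d_{BL}\bigl(\cL(V_t^{\alpha+k^\mu,\mu}, X_t^{\alpha+k^\mu,\mu}), \cL(V_t^{\alpha,\mu}, X_t^{\alpha,\mu})\bigr) + d_{BL}\bigl(\cL(V_t^{\alpha,\mu}, X_t^{\alpha,\mu}), \mu_\infty\bigr).
\]
This uses that $(\bar V^\mu, \bar X^\mu) = (V^{\alpha+k^\mu,\mu}, X^{\alpha+k^\mu,\mu})$.

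The second term is handled by Theorem~\ref{prop:erglin} with $\tilde\mu = \mu_\infty$. Since $\mu_\infty$ is invariant for the linear dynamics, this gives the bound $C_* e^{-\lambda_* t} d_{BL}(\mu,\mu_\infty)$. We may take $\lambda < \lambda'_\alpha \le \lambda_\alpha < \lambda_*$, so this term is at most $C_* e^{-\lambda t} d_{BL}(\mu,\mu_\infty)$.

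The first term is handled by Lemma~\ref{lem:318}, which bounds it by $C_* \int_0^t e^{-\lambda_*(t-s)} |k_s^\mu| \d s$. So everything reduces to an estimate of the form
\[
|k_s^\mu| \le C e^{-\lambda s} d_{BL}(\mu,\mu_\infty) + C_T d_{BL}(\mu,\mu_\infty)^2, \qquad s \le T .
\]
Given this, $\int_0^t e^{-\lambda_*(t-s)} e^{-\lambda s} \d s \le e^{-\lambda t}/(\lambda_* - \lambda)$ because $\lambda < \lambda_*$. The quadratic part integrates to at most $C_T/\lambda_*$ times $d_{BL}(\mu,\mu_\infty)^2$, which is absorbed into the $C_T$ term.

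To bound $k^\mu$, I start from the key estimate \eqref{eq:ktmu2}:
\[
|k_s^\mu| \le |\varphi_s^\mu| + \int_0^s |\Omega_\alpha(s-r)|\, |\varphi_r^\mu| \d r + C_T d_{BL}(\mu,\mu_\infty)^2 .
\]
Next I bound $\varphi^\mu$. Since $\alpha = J \int x f(v) \mu_\infty(\d v,\d x)$,
\[
\varphi_s^\mu = J \int \E_{(v,x)}[X_s^\alpha f(V_s^\alpha)]\,(\mu - \mu_\infty)(\d v,\d x),
\]
using once more the invariance of $\mu_\infty$. The function $(v,x) \mapsto x f(v)$ is bounded and Lipschitz with constant at most $F = \|f\|_{Lip} \vee \|f\|_\infty$, so $(1/F)\, x f(v)$ is an admissible test function. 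Theorem~\ref{prop:erglin} then gives $|\varphi_s^\mu| \le |J| F C_* e^{-\lambda_* s} d_{BL}(\mu,\mu_\infty)$.

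For the convolution, I use the lemma on $\Omega_\alpha$ with an intermediate rate $\lambda < \lambda'' < \lambda'_\alpha$, which gives $|\Omega_\alpha(u)| \le C e^{-\lambda'' u}$. Then
\[
\int_0^s e^{-\lambda''(s-r)} e^{-\lambda_* r} \d r \le \frac{C}{\lambda_* - \lambda''}\, e^{-\lambda'' s} \le C e^{-\lambda s}.
\]
This yields the desired bound on $k^\mu$, with a constant $C_\lambda$ independent of $T$ and the quadratic remainder carrying $C_T$.

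The main obstacle is already packaged in \eqref{eq:ktmu2}, which rests on \eqref{eq:ktmu}, adapted from \cite{MNA}. Establishing \eqref{eq:ktmu} would require applying Proposition~\ref{prop:TK} with $g(v,x) = J x f(v)$. One then replaces $\cL(V_s^{\alpha+k}, X_s^{\alpha+k})$ by $\mu_\infty$ inside the integral, which produces $\Theta_\alpha$. The error is controlled by Lemma~\ref{lem:318} together with the Lipschitz bound on $\frac{\d}{\d v} S_t$ from Remark~\ref{rem:315}, and it is quadratic in $d_{BL}(\mu,\mu_\infty)$ on $[0,T]$. For that we need $\sup_{s\le T}|k_s^\mu| \le C_T d_{BL}(\mu,\mu_\infty)$, which follows from Proposition~\ref{prop:1} applied with $\tilde\mu = \mu_\infty$, a stationary solution. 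Since I take \eqref{eq:ktmu2} as given, the remaining work is the bookkeeping of rates above. The point to check carefully is that $C_\lambda$ does not depend on $T$, and that all $T$-dependence sits only in the quadratic term.
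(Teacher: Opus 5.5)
Your proposal is correct and follows the paper's proof essentially step by step. Both use the triangle inequality through $(\bar V^\mu,\bar X^\mu)=(V^{\alpha+k^\mu,\mu},X^{\alpha+k^\mu,\mu})$, Lemma~\ref{lem:318}, the key estimate \eqref{eq:ktmu2}, the bound $|\varphi^\mu_s|\le C e^{-\lambda_* s}d_{BL}(\mu,\mu_\infty)$ from Theorem~\ref{prop:erglin}, and the exponential decay of $\Omega_\alpha$. The differences are cosmetic: you bound $|k^\mu_s|$ pointwise before integrating, you treat the second triangle term explicitly, and by weakening $e^{-\lambda_* s}$ to $e^{-\lambda s}$ before convolving you avoid the paper's small slip of bounding $\int_0^t e^{-\lambda_*(t-s)}e^{-\lambda_* s}\,\d s$ by $C e^{-\lambda_* t}$, when it actually equals $t e^{-\lambda_* t}$.
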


\begin{proof}
We use that $ ( \bar V_t^{ \mu}, \bar X_t^{ \mu } ) = ( V_t^{ \alpha + k^\mu , \mu }, X_t^{\alpha + k^\mu , \mu  } )$ to obtain 
$$ d_{BL} ( \cL  ( \bar V_t^{ \mu}, \bar X_t^{ \mu } ), \mu_\infty )  \le d_{BL} ( \cL  ( V_t^{ \alpha + k^\mu , \mu  }, X_t^{\alpha + k^\mu , \mu } ) , \cL ( V_t^{ \alpha , \mu  }, X_t^{\alpha , \mu   } ) ) + d_{BL} ( \cL ( V_t^{ \alpha , \mu  }, X_t^{\alpha , \mu  } ) , \mu_\infty^\alpha ) .  $$
By Lemma~\ref{lem:318}, and using the above estimates,  
\begin{multline*} d_{BL} ( \cL  ( V_t^{ \alpha + k^\mu , \mu }, X_t^{\alpha + k^\mu , \mu } ) ,\cL  ( V_t^{ \alpha , \mu  }, X_t^{\alpha , \mu  } ) ) \le C_* \int_0^t e^{ - \lambda_* ( t-s) } | k_s^\mu |  \d s \\
\le C_* \int_0^t e^{ - \lambda_* (t-s) } \left( | \varphi_s^\mu | + \int_0^s | \Omega_\alpha ( s-u  ) | | \varphi_u^\mu | \d u \right) \d s + C_T d_{ BL} ( \mu, \mu_\infty)^2.   
\end{multline*}
Since $ \R \times [0, 1 ] \ni (v, x) \mapsto J f(v) x $ is Lipschitz and bounded, we have by Theorem~\ref{prop:erglin}, 
$$ | \varphi_t^\mu | \le C_* J [ \|f\|_{Lip} + \| f \|_\infty ]e^{ - \lambda_ * t } d_{BL} ( \mu, \mu_\infty). $$
So, for a convenient constant, 
$$ C_* \int_0^t e^{ - \lambda_* (t-s) }  | \varphi_s^\mu | \d s \le C e^{- \lambda_* t }d_{BL} ( \mu, \mu_\infty) .  $$
Next we fix some $ \lambda < \lambda_\alpha' < \lambda_* $ such that, for some constant $C,$ $ | \Omega_\alpha ( s  ) | \le C e^{ - \lambda s } ,$ for all $s.$ This implies that 
$$\int_0^t e^{ - \lambda_* (t-s) }  \int_0^s | \Omega_\alpha ( s-u  ) | | \varphi_u^\mu | \d u \d s \le C_{\lambda} e^{ - \lambda t }  d_{BL} ( \mu, \mu_\infty) , $$
whence the assertion. 
\end{proof}

The proof of Theorem~\ref{theo:2main} is now straightforward and follows from the above result; see \cite{cormier2023stability} for the details.

\section{Numerical methods and examples}\label{sec:num}
\subsection{Numerical implementation}
In this section we explain how to find numerically the invariant distributions and compute their local stability using our main results. 
An implementation of the method presented below (together with a javascript interface) can be found on the \href{https://quentincormier.org/2Dspiking.html}{following web page}.

In what follows, we assume the parameters $b, f, \tau$ and $U$ to be fixed. The goal of our approach is to restate our criteria in terms of simple  ODE's that can be easily implemented. Empirically, we have observed that the ODE approach described below is numerically more stable than the naive approach consisting of directly implementing the formulas of Lemma~\ref{lem:explicit_value_LTheta}.
First, following Proposition~\ref{prop:atleastone}, there is a one-to-one correspondence between the invariant distributions of \eqref{eq:VXNL} and the solution of the scalar equation
\[ J = \frac{\alpha }{ \int_{\R \times [0, 1]} x f(v) \mu^\infty_\alpha(\d v , \d x)} = \frac{\alpha }{ \int_{\R} f(v) M(v) \nu^\infty_\alpha(v) \d v}. \]
To compute the right hand side, we use:
\begin{align*}
&	\frac{1}{\gamma(\alpha)} = \int_0^\infty  \exp\left( -\int_0^t f(\varphi^\alpha_u (0)  ) \d u \right) \d t, \\  
&            C^\alpha = U \left( 1 -  (1-U) \int_0^\infty f(\varphi^\alpha_t(0)) \exp \left(-\int_0^t f(\varphi^\alpha_u(0)) \d u \right) e^{-\frac{t}{\tau}} \d t \right)^{-1}, \\
&	\int_{\R} f(v) M(v) \nu^\infty_\alpha(v) \d v = \gamma(\alpha) \left[ 1 - C^\alpha \int_0^\infty K_\alpha(t) e^{-\frac{t}{\tau}} \d t \right]  , 
\end{align*}
Therefore, we solve the following ODE:
\[
  \mathbf{x}(0) = \begin{bmatrix} 
        0 \\ 1 \\ 0 \\ 0  
    \end{bmatrix},  \quad 
  \mathbf{x}(t) = \begin{bmatrix} 
        x_1(t) \\ x_2(t) \\ x_3(t) \\ x_4(t) 
    \end{bmatrix}, \quad     \frac{d}{dt} \mathbf{x}(t) = \begin{bmatrix} 
        b(x_1(t)) + \alpha \\ 
        - f(x_1(t)) x_2(t)\\ 
        x_2(t) \\ 
	f(x_1(t)) x_2(t) e^{-t/\tau}
    \end{bmatrix} ,
\]
so that
\[ x_1(t) = \varphi^\alpha_t(0), \quad x_2(t) = H_\alpha(t), \quad x_3(t) = \int_0^t H_\alpha(s) \d s, \quad x_4(t) =  \int_0^t f(\varphi^\alpha_s(0)) H_\alpha(s) e^{-\frac{s}{\tau}} \d s.  \]
Finally, we compute the solution of this ODE for $t = T$ large enough such that
\[  \int_{\R} f(v) M(v) \nu^\infty_\alpha(v) \d v \approx \frac{1}{x_3(T)} \left[ 1 - U \frac{x_4(T)}{ 1-(1-U) x_4(T)} \right].  \]
We now explain how to decide whether a given invariant distribution is stable or not. To compute $\widehat{\Theta}_\alpha (z)$, we rely on Lemma~\ref{lem:explicit_value_LTheta}:
	\[ \widehat{\Theta}_\alpha (z)  = \frac{1}{1-(1-U)\widehat{K}(z+\tfrac{1}{\tau})} \left\{ \widehat{\Xi_1}(z) - \widehat{\Xi_1}(z + \tfrac{1}{\tau}) + \widehat{\Xi_2}(z+\tfrac{1}{\tau}) +  \frac{\widehat{\Xi_1}(z)( \widehat{K}(z) - \widehat{K}(z+\tfrac{1}{\tau}))}{1-\widehat{K}(z)}  \right\}. \]
We introduce
\[ \Psi_1(t) = - J \int_{\R} \left[ \frac{d}{d v} H^v(t) \right] \nu_\infty(\d v) = J \gamma(\alpha) \int_0^\infty H_\alpha(t+u) \frac{f(\varphi^\alpha_{t+u}(0)) - f(\varphi^\alpha_u(0))}{b(\varphi^\alpha_u(0) + \alpha} \d u. \]
We first explain the strategy to compute efficiently (and accurately) $\widehat{\Psi}_1$.
We first note that
\[ \widehat{\Psi}_1(z) =  J \gamma(\alpha)  \int_0^\infty \frac{e^{zu}}{b(\varphi^\alpha_u(0)) + \alpha} \left[ \int_u^\infty e^{-zs} K_\alpha(s) \d s - f(\varphi^\alpha_u(0)) \int_u^\infty e^{-zs} H_\alpha(s) \d s \right] \d u. \]
Integrating by parts, we find that
\begin{multline*} \widehat{\Psi_1}(z) = J \gamma(\alpha)  \int_0^\infty \left[ \int_0^u \frac{e^{z \theta}}{b(\varphi^\alpha_\theta) + \alpha} \d \theta \right] e^{-z u }  K_\alpha(u) \d u \\
-  J \gamma(\alpha)  \int_0^\infty \int_0^u \left[ \frac{f(\varphi^\alpha_\theta) e^{z \theta}}{b(\varphi^\alpha_\theta) + \alpha} \d \theta \right] e^{-z u} H_\alpha(u) \d u.   
\end{multline*} 
To get rid of the double integrals, we now define
\[ A_1(u) = \left[ \int_0^u \frac{e^{z \theta}}{b(\varphi^\alpha_\theta) + \alpha} \d \theta \right] e^{-zu} (b(\varphi^\alpha_u(0)) + \alpha), \]
and we note that
\[ \frac{\d }{\d u} A_1(u) = 1 + (b'(\varphi^\alpha_u) - z) A_1(u).  \]
To deal with the other double integral appearing in the expression of $ \widehat{\Psi}_1(z),$ we define
\[ B_1(u) = \left[ \int_0^u \frac{e^{z \theta} f(\varphi^\alpha_\theta (0))}{b(\varphi^\alpha_\theta) + \alpha} \d \theta \right] e^{-zu} (b(\varphi^\alpha_u(0)) + \alpha), \]
so that
\[ \frac{\d }{\d u} B_1(u) = f(\varphi^\alpha_u(0)) + (b'(\varphi^\alpha_u) - z) B_1(u).  \]
Altogether,
\[  \widehat{\Psi}_1(z) = J \gamma(\alpha)  \int_0^\infty \frac{A_1(u)}{b(\varphi^\alpha_u(0)) + \alpha} K_\alpha(u) \d u -  J \gamma(\alpha)  \int_0^\infty  \frac{B_1(u)}{b(\varphi^\alpha_u(0)) + \alpha} H_\alpha(u) \d u.   \]
We similarly introduce
\[ \Psi_2(t) = - J \int_{\R} \left[ \frac{d}{d v} H^v(t) \right] M(v) \nu_\infty(\d v). \] 
Substituting $M(v)$ by its explicit expression, we find that:
\[ \Psi_2(t) = \Psi_1(t) - C_\alpha J \gamma(\alpha) \int_{\R+} H_\alpha(t+u) e^{-\frac{u}{\tau}} \frac{ f(\varphi^\alpha_{t+u}(0)) - f(\varphi^\alpha_u(0))}{b(\varphi^\alpha_u(0)) + \alpha} \d u.   \]
We define then $A_2(t)$ and $B_2(t)$ to be the solution of the following ODE:
\[ \frac{\d}{\d t} A_2(t) = e^{-t/\tau} + (b'(\varphi^\alpha_t(0)) - z) A_2(t), \quad A_2(0) = 0. \]
and
\[ \frac{\d}{\d t} B_2(t) = e^{-t/\tau}f(\varphi^\alpha_t(0)) + (b'(\varphi^\alpha_t(0)) - z) B_2(t), \quad B_2(0) = 0, \]
so that
\[ \widehat{\Psi}_2(z) = \widehat{\Psi}_1(z) - C_\alpha J \gamma(\alpha) \int_0^\infty \frac{A_2(u) K_\alpha(u) - B_2(u) H_\alpha(u)}{b(\varphi^\alpha_u) + \alpha} \d u. \]
Altogether, in order to compute $\widehat{\Psi}_1(z)$ and $\widehat{\Psi}_2(z)$, we solve the following ODE:
\[
  \mathbf{y}(0) = \begin{bmatrix} 
        0 \\ 1 \\ 0 \\ 0 \\ 0 \\ 0 \\ 0 \\ 0 
    \end{bmatrix},  \quad 
  \mathbf{y}(t) = \begin{bmatrix} 
        y_1(t) \\ y_2(t) \\ y_3(t) \\ y_4(t) \\ y_5(t) \\ y_6(t) \\ y_7(t) \\ y_8(t) 
    \end{bmatrix}, \quad     \frac{d}{dt} \mathbf{y}(t) = \begin{bmatrix} 
        b(y_1(t)) + \alpha \\ 
        - f(y_1(t)) y_2(t)\\ 
	1+ [b'(y_1(t)) - z] y_3(t) \\ 
	f(y_1(t)) + [b'(y_1(t)) - z] y_4(t) \\
	  y_2(t) \left[ \frac{y_3(t) f(y_1(t))}{b(y_1(t)) + \alpha} - \frac{y_4(t)}{b(y_1(t)) +\alpha} \right] \\
	  e^{-t/\tau}+ [b'(y_1(t)) - z] y_6(t) \\ 
	  f(y_1(t)) e^{-t/\tau} + [b'(y_1(t)) - z] y_7(t) \\
	  y_2(t) \left[ \frac{y_6(t) f(y_1(t))}{b(y_1(t)) + \alpha} - \frac{y_7(t)}{b(y_1(t)) +\alpha} \right] 
    \end{bmatrix} ,
\]
so that 
\[ y_1(t) = \varphi^\alpha_t(0), \quad y_2(t) = H^\alpha_t, \quad y_3(t) = A_1(t), \quad y_4(t) = B_1(t), \quad y_6(t) = A_2(t), \quad y_7(t) = B_2(t). \]
In addition, for $T$ large enough, it holds that
\[ \widehat{\Psi}_1(z) \approx J \gamma(\alpha)  y_5(T), \quad \widehat{\Psi}_2(z) \approx  J \gamma(\alpha)  y_5(T) - C_\alpha J \gamma(\alpha) y_8(T).  \]
The derivative of $\widehat{\Psi}_1(z)$ and $\widehat{\Psi}_2(z)$ with respect to $z$ are computed similarly, by solving an ODE. Finally, $\widehat{\Theta}_\alpha (z)$ and its derivative are computed using the formula of Lemma~\ref{lem:explicit_value_LTheta}. Finally, consider a clockwise contour $\gamma$ which consists of a line from -$iR$ to $i R$ on the imaginary axis and a semicircle on the right half plane.
The number of solutions of the equation $1-\widehat{\Theta}_\alpha (z) =0$ is given by:
\[ \frac{1}{2 \pi i} \int_{\gamma_R} \frac{\widehat{\Theta}_\alpha '(z)}{\widehat{\Theta}_\alpha (z) - 1} \d z. \]
As $R \rightarrow \infty$, this quantity converges to:
\[ \frac{1}{\pi} \int_0^\infty \frac{\widehat{\Theta}'_\alpha(iy)}{\widehat{\Theta}_\alpha(iy) - 1} \d y. \]
This quantity is an integer. We compute it using a trapezoidal method.
When this quantity is equal to zero, the invariant distribution is stable. 

\section{Examples}\label{sec:5}
In this section we give two examples where our criterion can be successfully assessed.

\subsection{A first example}
We first discuss an example where spiking only occurs when the potential is above a certain threshold. We choose
\begin{equation}\label{eq:exbeta}
f(x) = \begin{cases} 0 \quad \text {if } x \leq 1 \\ 
\frac{1}{\beta} \quad \text {if } x > 1 , \end{cases} ,\; 
b(x) = m - x  ,
\end{equation}
for some parameters $m> 1$ and $\beta > 0$.
Let
\[
\omega
= \log\!\left( \frac{m + \alpha}{m + \alpha - 1} \right), \quad \delta = \frac{\alpha}{m + \alpha - 1}.
\]
We find that
\[
\widehat{H}_\alpha(z) =  \frac{1 - e^{- \omega z }}{z}
+ \frac{e^{- \omega z}}{\,z + 1/\beta\,}, \quad \widehat{K}_\alpha(z) = 1 - z \widehat{H}_\alpha(z).
\]
Therefore, 
\[ \widehat{K}_\alpha(z) = \frac{e^{-\omega z}}{1+\beta z}\]
and
\begin{equation}\label{eq:psi1_explicit}
\widehat{\Psi}_1(z) = \frac{1 - (1-U)\widehat{K}_\alpha(1/\tau)}{1 - \widehat{K}_\alpha(1/\tau)} \delta \frac{1 - e^{-\omega(z+1)}}{(1+\beta z) (z+1)}. 
\end{equation}
Moreover, the constant $C^\alpha$ is given by
 \[ C^\alpha = \frac{U}{1-(1-U) \widehat{K}_\alpha(\frac{1}{\tau})}, \quad \widehat{K}_\alpha(\frac{1}{\tau}) = \frac{\tau}{\tau + \beta} e^{-\frac{\omega}{\tau}}.\]
Finally, 
\begin{equation}
\widehat{\Psi}_2 (z) = \widehat{\Psi}_1 (z)  - \frac{C^\alpha \delta}{1 + \beta z} \frac{1 - (1-U)\widehat{K}_\alpha(1/\tau)}{1 - \widehat{K}_\alpha(1/\tau)} \frac{e^{-\frac{\omega}{\tau}} - e^{-\omega(z+1)} }{ z+1 - \frac{1}{\tau}}.
\end{equation}
After some simplifications, we find that the condition $\widehat{\Theta}_\alpha (z) = 1$ is equivalent to \begin{equation}
1-(1-U)\widehat{K}(z+1/\tau) = \widehat{\Psi}_1(z) \left[ z \frac{1-\widehat{K}(z+1/\tau)}{1-\widehat{K}(z)} - (z+1/\tau) C^\alpha \frac{\widehat{K}(z+1/\tau)}{\widehat{K}(z)} \right],
\end{equation}
which is an explicit equation where $\widehat{K}(z) = \frac{e^{-\omega z}}{1+\beta z}$ and $\widehat{\Psi}_1(z)$ is given by \eqref{eq:psi1_explicit}.
Using these explicit formulas, we can study the effect of the parameters $U$ and $\tau$ on the stability of the stationary solutions. The result are gathered in Figure~\ref{fig:explicit-stab-step}. We find that increasing $U$ and decreasing $\tau$ tends, in the example, to stabilize the invariant distribution and to prevent the emergence of oscillations in the system. 
\begin{figure}[!ht]
    \centering
    \includegraphics[width=0.8\textwidth]{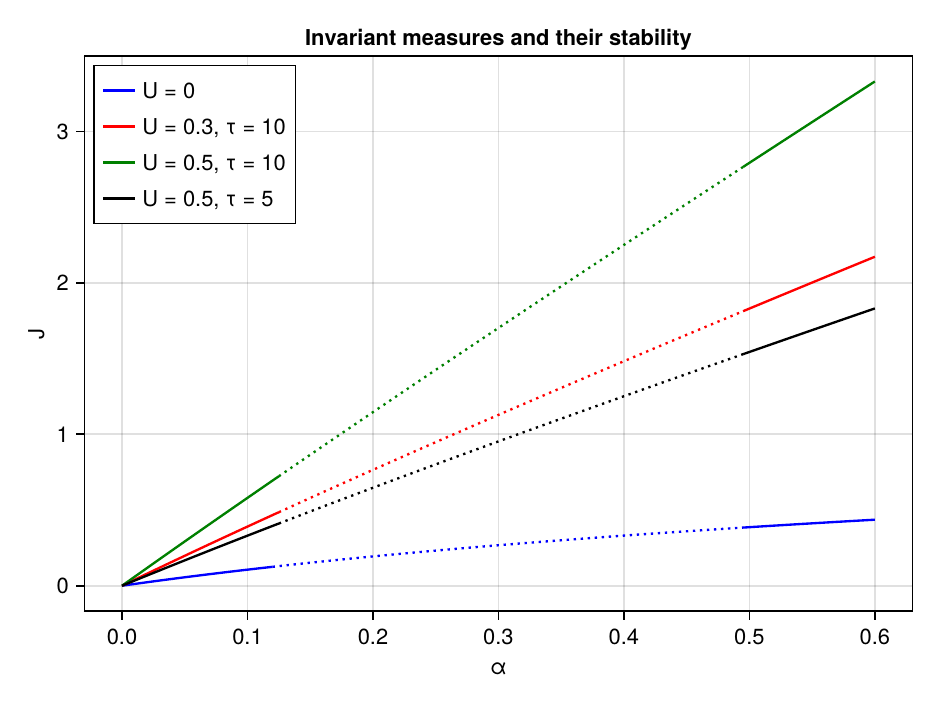}
    \caption{Simulation of the model (\ref{eq:exbeta}) for $\beta = 0.08$, $b(x) = 3/2 - x$.
    We see that in the range $\alpha \in[0,0.75]$, there is uniqueness of the invariant probability measure. Stability does not hold in an interval, suggesting Hopf bifurcations. We study the effect of the parameters $U$ and $\tau$ on the stability of the invariant distribution. We see that increasing $U$ or decreasing $\tau$ tends to increase the range of stability of the invariant distribution.  
    }
    \label{fig:explicit-stab-step}
\end{figure}

\subsection{Self Organized Bistability}
In what follows, we study the dynamics of the model with the following parameters:
\[ b(v) = b_0 - v, \quad f(v) = \max(0,v)^2, \quad b_0 = 0.05. \] 
We also choose $J = 6$, $U = 0.3$ and $\tau = 30$. With this set of parameters, the McKean-Vlasov equation~\eqref{eq:VXNLintro} has a unique invariant distribution, which is locally stable, see Figure~\ref{fig:SOC-nyquist}. However, when we simulate the model, an interesting oscillatory behavior appears, as shown in Figure~\ref{fig:raster-SOC}. The simulation is run on $[0, T]$, with $T = 600$. We observe that each neuron fires on average 41 times, with a relatively small standard deviation of $3.5$.
This is to be compared to the period of the oscillations, approximately equal to $75$. Therefore, each neuron spikes approximately 5 times per period of the macroscopic oscillations! This behavior is very different from the periodic solutions found in the 1D model, where typically one neuron is firing one time per period of the collective oscillations.

The observed behavior is well understood following \cite{PhysRevResearch.2.013318}, where a toy model with a 2D slow-fast ODE is shown to exhibit a ``self-organized'' bistable behavior.

With our choice of parameters, we are close to a slow-fast dynamics: the dynamics of $(\bar V_t)$ is much faster than the dynamics of $(\bar X_t)$.
We first study the invariant distributions (and their stability) of the corresponding one-dimensional model; that is, we assume that $(\bar X_t)$ is constant, and define
\[ J_{\text{eff}} = J \E (\bar X_t) . \]
Therefore, we consider the dynamics of the one-dimensional model:
\[ \d \tilde{V}_t = b(\tilde{V}_t) \d t + J_{\text{eff}} \E f(\tilde{V}_t) \d t - \int_0^t \int_{\R} \tilde{V}_{s-} \indic{z \leq f(\tilde{V}_{s-})} N(\d s, \d z). \] 
We show that depending on the value of $J_\text{eff}$, this equation has a bistable behavior, see Figure~\ref{fig:SOC-1D-inv-measure}:
\begin{itemize}
	\item When $J < J_0 \approx 2$ or $J > J_1 \approx 4.6$, $(\tilde{V}_t)$ has a unique invariant distribution, which is unstable. 
	\item When $J \in (J_0, J_1)$, there are 3 invariant distributions, two of them are stable. One corresponds to a very small activity of the network, the other to a large activity. 
\end{itemize}
Finally, we plot the value of $J_{\text{eff}}(t) = J \E (\bar X_t)$, see Figure~\ref{fig:SOC-Jeff}, showing that the effective $J$ indeed explores the two critical values $J_0$ and $J_1$.
To conclude, with this set of parameters, the model~\eqref{eq:VXNLintro} is therefore a concrete/realistic implementation of the mechanism of self-organized bistability discussed in~\cite{PhysRevResearch.2.013318}.

\begin{figure}[!ht]
    \centering
    \begin{subfigure}[b]{0.48\textwidth}
        \centering
        \includegraphics[width=\textwidth]{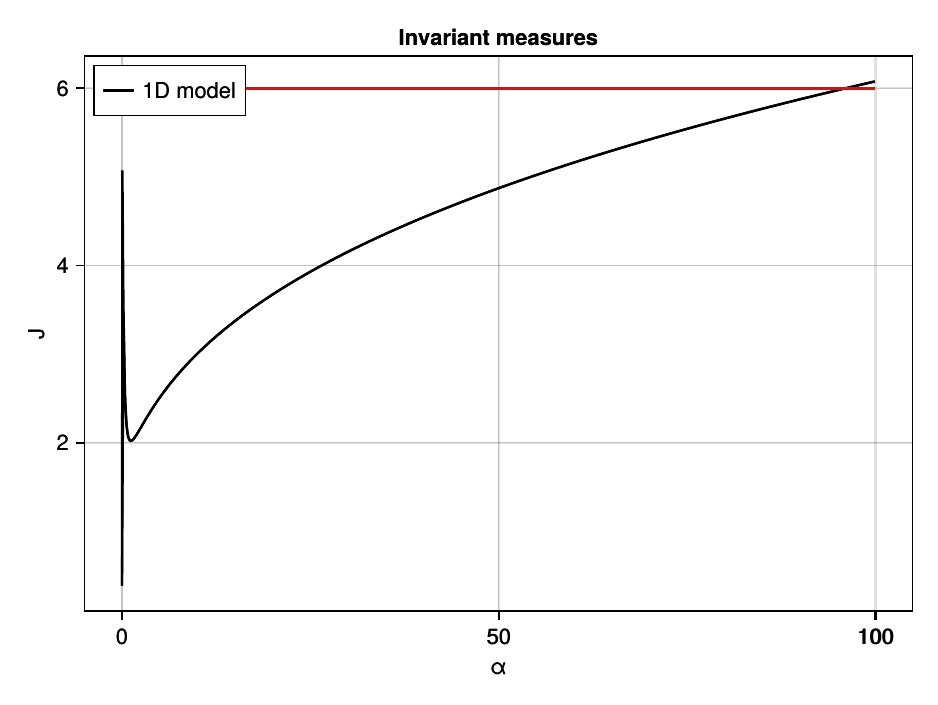}
        \caption{Full range.}
        \label{fig:SOC-1D-inv-measure}
    \end{subfigure}
    \hfill %
    \begin{subfigure}[b]{0.48\textwidth}
        \centering
        \includegraphics[width=\textwidth]{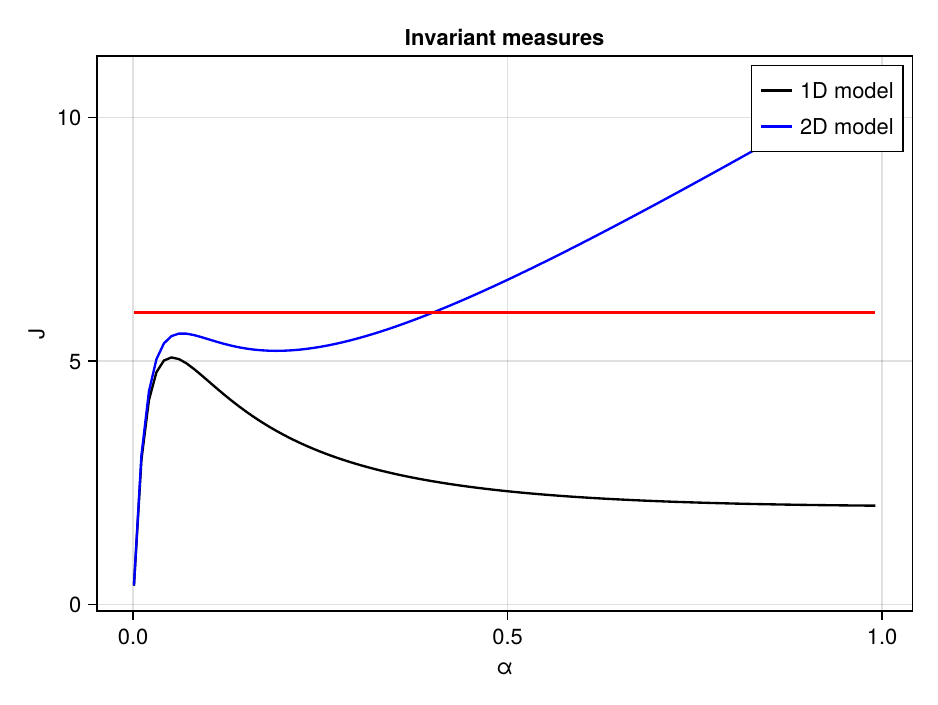}
        \caption{Zoomed for $\alpha \in [0, 1]$.}
        \label{fig:SOC-2D-inv-measure}
    \end{subfigure}

    \caption{Number of invariant distributions of the 1D equivalent model and the 2D model.}
    \label{fig:combined-SOC-measure}
\end{figure}

\begin{figure}[!ht]
    \centering
    \includegraphics[width=0.8\textwidth]{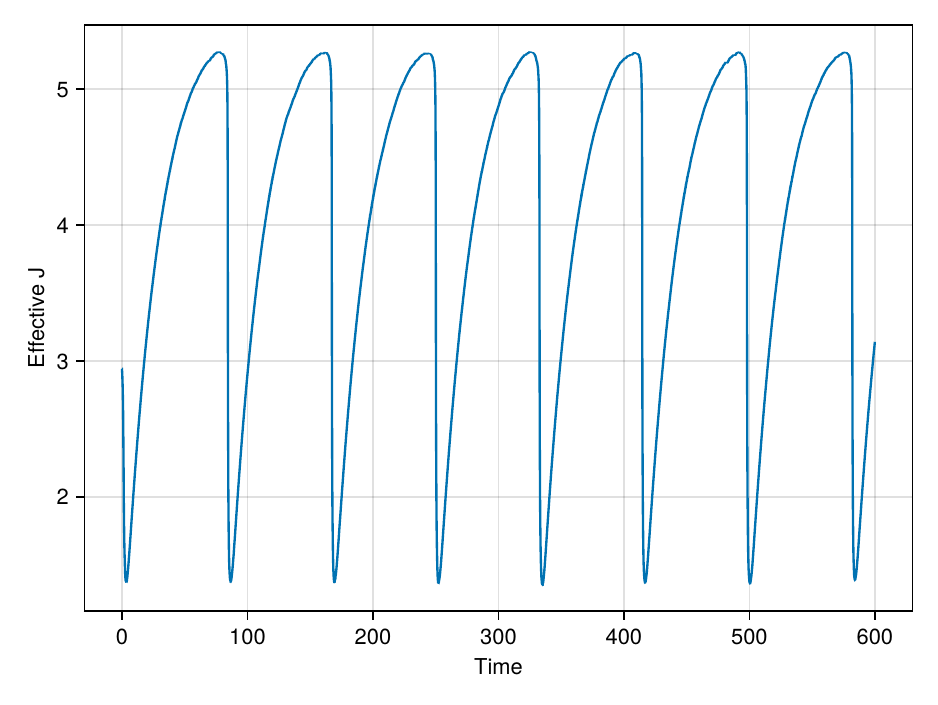}
    \caption{ Plot of $J_{\text{eff}} = J \E X_t$ as a function of time. We observe that the dynamics explores the two critical parameters of the 1D  model, which are $J_0 = 2$ and $J_1 = 4.6$. For $J_{\text{eff}} \in (J_0, J_1)$, the dynamics of the 1D-model is bistable (two stable invariant distributions).  }
    \label{fig:SOC-Jeff}
\end{figure}

\section{Appendix: heuristic derivation of the spectral condition}
\label{app:spectral_heuristic}

In this appendix, we provide an informal, partial differential equation (PDE)-based derivation of the spectral condition $\widehat{\Theta}_\alpha(z) = 1$ determining the local stability of an invariant probability measure. The aim here is to build rapid physical intuition by relying on the linearization of the non-linear Fokker-Planck equation\footnote{We use the term Fokker-Planck equation here to guide the intuition, the PDE is actually a hyperbolic equation of transport type; see \cite{Carrillo2025} for a comprehensive review of PDEs in neuroscience.} around the equilibrium. 

Let $p(t, v, x)$ denote the probability density function associated to the law of $(V_t, X_t)$. The macroscopic dynamics solve (in a weak sense) the non-linear Fokker-Planck equation:
\[
    \partial_t p = -\partial_v \left[ \left(b(v) + J\iint x' f(v') p(t,  \d v', \d x')  \right) p \right] - \partial_x \left[ \frac{1-x}{\tau} p \right] + \mathcal{S} p,
\]
where the jump operator $\mathcal{S}$ is defined weakly for any suitable test function $g$ as:
\[
    \iint g(v,x) (\mathcal{S} p)(v,x) \d v \d x = \iint \left[g(0, (1-U)x) - g(v, x)\right] f(v) p(t, v, x) \d v \d x.
\]
To investigate local stability, we linearize this PDE around a stationary equilibrium state $\mu_\infty$. We freeze the non-linear interaction term by introducing the constant $\alpha = J\iint x f(v) \mu_\infty(\d v, \d x)$ and consider a small perturbation $\phi(t, v, x)$ such that $p(t, v, x) = \mu_\infty(v, x) + \phi(t, v, x)$. Let $\mathcal{L}_\alpha^*$ be the forward Fokker-Planck operator (the adjoint of the infinitesimal generator) for the linear process $(V_t^\alpha, X_t^\alpha)$:
\[
    \mathcal{L}_\alpha^* p = -\partial_v \left[ (b(v)+\alpha) p \right] - \partial_x \left[ \frac{1-x}{\tau} p \right] + \mathcal{S} p.
\]
Keeping only the first-order terms in the expansion of the non-linear PDE, the perturbation $\phi$ evolves according to the linearized equation:
\[
    \partial_t \phi = \mathcal{L}_\alpha^* \phi - J \left( \iint x' f(v') \phi(t, \d v', \d x') \right) \partial_v \mu_\infty.
\]
We analyze the spectrum of this linear PDE by considering normal modes of the form $\phi(t, v, x) = e^{zt}\varphi(v, x)$. Injecting this into the linearized equation yields the eigenvalue problem:
\[
    (z I - \mathcal{L}_\alpha^*) \phi = - J \left( \iint x' f(v') \phi(t,\d v', \d x') \right) \partial_v \mu_\infty.
\]
Let $\mathcal{T}_\alpha^*(t)$ be the semigroup generated by $\mathcal{L}_\alpha^*$. We define the resolvent operator $R(z) = (z I - \mathcal{L}_\alpha^*)^{-1}$ via its Laplace transform $R(z) = \int_0^\infty e^{-z t} \mathcal{T}_\alpha^*(t) \d t$. Applying the resolvent to both sides, we obtain an expression for the eigenmode $\phi$:
\begin{equation}
    \label{eq:phi_resolvent}
    \phi = - J \left( \iint x' f(v') \phi(t,\d v', \d x') \right) R(z) [\partial_v \mu_\infty].
\end{equation}

To find a closed condition for the eigenvalues $z$, we project this equation along the interaction observable. Multiplying both sides of \eqref{eq:phi_resolvent} by $x f(v)$ and integrating over the state space gives:
\begin{equation}
    \iint x f(v) \phi(t,\d v, \d x) = - J \left( \iint x f(v) \phi(t,\d v, \d x) \right) \iint x f(v) R(z) [\partial_v \mu_\infty] \d v \d x.
\end{equation}
For a non-trivial perturbation, the scalar quantity $\iint x f(v) \phi(\d v, \d x)$ is non-zero. Dividing it out yields the self-consistency condition:
\begin{equation}
    \label{eq:self_consistency}
    1 + J \iint x f(v) R(z) [\partial_v \mu_\infty] \d v \d x = 0.
\end{equation}

We now connect this resolvent term back to the linear response function $\Theta_\alpha(t)$ defined in \eqref{eq:thetaintro}. Expanding the resolvent as a Laplace transform, we have:
\[
    J \iint x f(v) R(z) [\partial_v \mu_\infty] \d v \d x = \int_0^\infty e^{-z t} \left[ J \iint x f(v) \mathcal{T}_\alpha^*(t) [\partial_v \mu_\infty] \d v \d x \right] \d t.
\]
The inner integral is equal to:
\[
    J \iint \mathbb{E}_{(v,x)} \left[ X_t^\alpha f(V_t^\alpha) \right] \partial_v \mu_\infty(\d v, \d x).
\]
Integrating by parts with respect to $v$, we obtain
\[
    - J \iint \frac{\partial}{\partial v} \mathbb{E}_{(v,x)} \left[ X_t^\alpha f(V_t^\alpha) \right] \mu_\infty(\d v, \d x).
\]
By definition, this quantity is precisely $-\Theta_\alpha(t)$. Therefore, the integral term in our self-consistency condition \eqref{eq:self_consistency} is simply the negative Laplace transform of $\Theta_\alpha(t)$:
\[
    J \iint x f(v) R(z) [\partial_v \mu_\infty] \d v \d x = - \int_0^\infty e^{-z t} \Theta_\alpha(t) \d t = - \widehat{\Theta}_\alpha(z).
\]
Substituting this result back into \eqref{eq:self_consistency}, we finally arrive at the spectral condition for the linearized system. This confirms that the local stability of the invariant measure $\mu_\infty$ is governed by the roots of the equation $\widehat{\Theta}_\alpha(z) = 1$ in the complex plane, matching the criterion stated in our main theorem, Theorem~\ref{theo:2main}.

\section*{Acknowledgements} The authors warmly thank Romain Veltz to whom we owe the arguments of Section \ref{app:spectral_heuristic}. VS is supported by a fellowship from the Swiss National Science Foundation (grant no. 222150).

\bibliographystyle{abbrvnat}
\bibliography{biblio}
\end{document}